\documentclass[smallextended]{svjour3}      
\usepackage{geometry}
\usepackage{appendix}
\usepackage{subfigure}
\usepackage{amsmath}
\usepackage{amssymb}
\usepackage{mathtools}
\usepackage{multirow, makecell}
\usepackage{float}
\usepackage{color}
\usepackage{tikz}
\usepackage{xypic}
\usepackage{diagbox}

\usetikzlibrary{shapes.geometric, arrows}
\tikzstyle{startstop} = [rectangle, rounded corners, minimum width = 0.9cm, minimum height=0.9cm,text centered, draw = black, 	align=center, fill = red!40]
\tikzstyle{io} = [trapezium, trapezium left angle=70, trapezium right angle=110, minimum width=0.9cm, minimum height=0.9cm, text centered, draw=black, fill = blue!40]
\tikzstyle{process} = [rectangle, minimum width=1.8cm, minimum height=0.9cm, text centered, draw=black, 	align=center, fill = yellow!50]
\tikzstyle{process2} = [rectangle, minimum width=1.8cm, minimum height=0.9cm, text centered, draw=black, fill = green!50]
\tikzstyle{arrow} = [->,>=stealth]
\newtheorem{wldefinition}[theorem]{Definition} 
\newtheorem{wllemma}[theorem]{Lemma}

\newtheorem{wlremark}[theorem]{Remark}
\newtheorem{wlproperty}[theorem]{Property}

\usepackage{thmtools}

\begin{document}
\title{Regular simplex tensors: optimization landscape, conjecture proof, and beyond}
\author{Lei  Wang}

\institute{
	This work was  supported by National Natural Science Foundation   of China (62401088).
	\\
	Lei  Wang
	\at
	the School of Microelectronics and Communication Engineering, Chongqing University, Chongqing 400044, China.
	\and
	Lei  Wang
	\at
	\email{wanglei179@mails.ucas.ac.cn  \\  Corresponding author  }
}

\date{Received: date / Accepted: date}

\maketitle

\begin{abstract}
The  concept of   tensor  eigenpairs    has attracted increasing research attention   in  the  past  decades.
Recent works have focused on a special class termed   regular simplex tensors, which are constructed from an equiangular tight frame of  $n$ vectors in  $(n-1)$-dimensional space 	 for  $n \ge 3$ and  order   $m \ge 3$.
Existing  works focus on   analyzing  the  robustness of eigenpairs obtained by the  tensor power method.
 At the end of that work, a conjecture was made  that   if  they  exist,  the only robust eigenvectors of a regular simplex tensor,    up to sign equivalence,  are the vectors in the    regular simplex   frame.
A subsequent  study 
 theoretically proved that this  holds for  the simplest  triangle  case  where $n=3$.
However,  
 for    cases with higher $n$,  the process becomes  complicated   in both  checking   all eigenpairs    and  determining the explicit formula   for   the  robustness criterion.
In this paper,	to deal with this issue,	a connection  between  robust and locally optimal eigenpairs is built, recognizing the latter as another pivotal concept in the field of optimization.
Then, we turn to  checking   the   local optimality of   all  eigenpairs,
for which  we have  developed   an efficient  model   with
a favorable structure that facilitates the enumeration of all eigenpairs and delineates the optimization landscape for the model.
Then, 	integrating the   two advances   enables us to
narrow the  scope of  the  robust  eigenpairs   to 
 those
locally maximized ones, which are exactly the vectors in the  regular simplex  frame.
Finally, the proof of the conjecture reduces to only  examining  
the   vectors in the  frame, whose robustness can be easily checked for  any higher $n$  and $m$. 
	This work shows that, up to sign equivalence,  
	 excluding 
 the exceptional cases $(m,n)=(3,3)/(3,4)/(4,3)$ where  no robust eigenpairs exist, 
	 the only robust eigenvectors of a regular simplex tensor are 
	 	 the  vectors in the regular simplex frame. 
	We   also   discuss the limitation of regular simplex tensors
	and  explore   a  promising  generalization by  fusing  
	 the Dirichlet distribution on the underlying  simplex  support set.
 A
 provable guarantee and   potential application  deserve
	future study.

	\keywords{
		Regular simplex tensor \and      eigenpairs \and    	local  optimality \and     constrained optimization
	}

	\subclass{  15A69,	90C26. } 

\end{abstract}

\section{Introduction}
Tensor   analysis and   applications  have been 
investigated
in recent years \cite{kolda,2007Numerical,RobustTensorCompletion,TensorPCA}.
Among them,
the concept of
tensor eigenpairs
serves as an important  subject,
which naturally extends the concept of matrix eigenpairs to the tensor setting, and
was originally proposed
in \cite{lim,qi}.
This concept
has been widely exploited  in  theory \cite{SHOPM,NCM,OTD,Cuicf,hm}  and 
has also found numerous applications
in  many disciplines, such as latent  variable models  \cite{Hsu}, hyperspectral  remote  sensing  image processing \cite{NPSA,MSDP},
signal processing \cite{tensor_bss1},  to name  a few. 

Different  from  the matrix  case,    there are several  definitions for eigenpairs  of 
a  symmetric tensor, such as   D-eigenpairs \cite{Deigen}, H-eigenpairs  \cite{qi},  E/Z-eigenpairs  \cite{qi}.
In this paper,  we mainly  focus  on  one of them ---
Z-eigenpairs
whose eigenvector  and eigenvalue are  over  the  real field and the vector  is of
unit length, which can be viewed as a natural generalization of the matrix eigenpair definition.
Without causing ambiguity, the term  eigenpairs will always refer to Z-eigenpairs by default. 
Similar to the matrix case, there exists an equivalence 
relation for tensor eigenpairs \cite{hm}. Briefly speaking, $(\lambda, \mathbf{v})$ is a Z-eigenpair of tensor  $\mathcal{S}$ if and only if $((-1)^{m-2}\lambda, -\mathbf{v})$ is also a Z-eigenpair of $\mathcal{S}$, where $m \ge 3 $ is the order of the tensor. Conventionally, only one representative from each equivalence class is selected for analysis and for counting the number of eigenpairs—a practice widely adopted in previous works \cite{hm,RobustEigen,Cuicf,upperbound}. We follow this convention in our discussion,
and  all eigenpairs and their total number
are presented   up to sign equivalence. 
Please refer to the  references for further details.
A tensor eigenpair problem
can  be treated  as a generic system of polynomial equations,
and all sorts of multivariate root-finding algorithms and  computer algebra techniques can be used to obtain its solutions,
to name a few \cite{NAClab,NCM,SDPrank1}.
However,  it  has  also  been   shown  that  most of 
the tensor  problems  are 
NP-hard  \cite{NP-hard},  including computing all  eigenpairs of  a  tensor.
To the best of our knowledge,
so far,  there are  only  two   algorithms that  can  obtain  all  eigenpairs of  a  tensor \cite{Cuicf,hm},
which can also fall into the scope of polynomial optimization.
Due to the NP-hardness, 
most   previous works only  aim
to obtain 
one maximized or minimized  eigenpair,
which is a 
relatively  easy task.
Various optimization algorithms have been developed.
Among them,
one of the
classical algorithms
is called   the  tensor power method (TPM) \cite{HOPM,hopm2002},  which is based on a  fixed-point scheme.
Shifted, adaptive and 
accelerated
versions   of
this method  were  also proposed  later  in  \cite{SHOPM,ASHOPM,AcceleratingHOPM}.
The  
fixed  points (which may be more than one)
of   TPM
are the eigenvectors of   a  tensor.
An important  issue concerning  TPM
is to
investigate
whether
the   obtained  eigenvector  is  robust or not,  which,  for the real solutions, can be determined by checking the spectral radius of the corresponding
Jacobian matrix (See Subsection \ref{robustsection} for    the detailed  definition).
Concerning  this problem,
previous works have
mainly  paid 
attention
to some   special  classes  of  symmetric  tensors,
and  one   widely researched   type
is termed
orthogonally  decomposable  (odeco) tensors \cite{odst,OTD,Hsu,sr1,cunmu,GloballyConvergent},
which
is
a
natural generalization of 
orthogonally decomposable
matrix decomposition.

However,
unfortunately,
most  symmetric tensors cannot be
orthogonally  decomposable.
In  this sense,
recently,
some researchers further  extended
the case of  odeco tensors  to
a  more generalized one, where the  symmetric  tensor  is
constructed     from
the   equiangular  set (ES) or equiangular tight frame (ETF), which contains
$r$ vectors in
$(n-1)$-dimensional space \cite{RobustEigen}.
For  example,
the case of  $r=n-1$  serves as a  special  case  of   the  ETF,  which
forms a  standard orthonormal  basis and  corresponds to
the  odeco    tensor.
When $r=n$,  the
frame is termed the regular simplex  frame,
and the  generated tensor is thus called
the regular simplex tensor, the core concept we tend to discuss in this work. 
In  \cite{RobustEigen},
they  discussed that under what   conditions
the eigenvectors  are robust,
and
studied
the  special  tensor  types     generated by ES or ETF with  
theoretical  proof.

Furthermore,
at  the end of that work \cite{RobustEigen},  a  conjecture  was     made  that the only  robust eigenvectors of a regular simplex tensor  are
the
vectors in the ETF.
See Conjecture 4.8 in  \cite{RobustEigen} for details.
We provide a more precise presentation  of
this conjecture
by  discussing its existence and uniqueness, 
which is restated as a formal    theorem   \ref{conjecturesimplex}   in Subsection \ref{conjecture}.
Later,
several researchers
further
focused  on  this conjecture \cite{teneigenstructure}.
However,
they only provided the robustness proof for the simplest  case of  $n=3$,
and the experimental   justification for  $n=4$.
For  larger
$n$,  it
is a tough task to prove,
due to the  increasing number of   all    eigenpairs  to be checked,
and   the   difficulty of obtaining explicit formula for the Jacobian matrix 
for  most  eigenpairs.

In this paper,  building upon  the only two  existing   works \cite{RobustEigen,teneigenstructure}
and to deal with the  difficulties,
we proceed further and provide a complete proof   concerning this  conjecture
and
the main contributions of this paper are
concluded as  follows:

\begin{itemize}
	\item
	The connection between robust
	and locally  optimal  eigenpairs
	is   investigated,
	which  enables us to simplify the 
	proof strategy adopted in \cite{teneigenstructure}.
	In detail, one does not need to check
	the robustness of  all eigenpairs, and
	it suffices   to only   check whether   the  locally maximized eigenpairs
	are robust or not.
	
	\item
	Though identifying   the  local optimality of all eigenpairs
	is also tough,
	we develop an efficient and equivalent
	model
	where  local optimality
	can be  discussed with a benign structure.
	We provide  the optimization landscape
	concerning the model 
	of  regular simplex tensor eigenpairs,
	and  show
	that only 
	the vectors in the  regular simplex  frame 
	are  locally maximized eigenpairs.
	\item
	With the above conclusions  at hand,
	we only need to check whether 
	all   vectors in the  regular simplex  frame
	are robust or not,
	whose    Jacobian matrix  can be  easily    deduced for   any higher $n$
	and the robustness can be easily  identified by its spectral radius.
	Finally,   the proof for the unsolved conjecture  is  provided.
\end{itemize}

We also discuss the  limitations of the regular simplex tensor,
and  provide a promising generalization, which is worth studying and is  discussed in Section
\ref{futurework}.
In the end, we  clarify a slight  difference
from the previous
works
\cite{RobustEigen,teneigenstructure}  in    the notation  adopted   in this paper.
The existing ones 
consider   $n+1$ vertices in $n $-dimensional space, while  here we
considered  $n$ vertices in $(n-1)$-dimensional space.
Such a  modification is adopted
for convenience in formula derivations
in the later  section (see Section  \ref{reformulated} for details), which is  trivial, 
and we wish that this minor adjustment  does
not cause any ambiguity  compared to previous works.

The  rest of the paper
is organized as  follows.
In Section  \ref{pre},
some preliminaries related to the subject are provided,  including
the  definition   of
tensor  eigenpairs,    the  focused regular simplex tensor, and the robustness criterion.
In  Section  \ref{Relationonrobust},
the connection between robust and 
locally  optimal eigenpairs
is investigated, providing an auxiliary  criterion for robustness checking.
Section  \ref{optland}
present   the optimization landscape of 
the model related to  eigenpairs of   the  regular simplex tensor, and
its detailed proof
is deferred to Section \ref{sec.proof.opt}.
Section \ref{sec.Conjecture}
completes the proof    of   the focused conjecture.
Limitations   
and 
promising  directions
for future work are discussed in  Section  \ref{futurework}.

\section{Preliminaries}
\label{pre}
We    introduce  some  necessary   notations, definitions,   and  lemmas   used  in   this  article.
In  this  work,    high-order tensors are denoted
by 
boldface Euler script letters, e.g.,
$\mathcal  A $.
Matrices are denoted  by    boldface capital letters, e.g., $\mathbf  A $; vectors are denoted in
boldface lowercase letters, e.g.,  $\mathbf  a $.
Sets and subsets are denoted by  blackboard bold  capital letters, e.g.,  $\mathbb  A $.

An
$m$th-order tensor is denoted
$\mathcal A \in \mathbb {R}^{I_1 \times I_2  \times \dots \times I_{m} }$,
where
$m$
is the order   of
$\mathcal A $, and
$ I_j $ ($  j \in \{ 1,2,\dots,m \}$)  is  the  dimension  of
the $j$th-mode.
The 
elements
of $\mathcal A$,    which  are  indexed  by integer tuples $(i_1,i_2,\dots,i_m) $,  are  denoted
by
$a_{i_1,i_2,\dots,i_m}$
where 
$	1 \le i_1 \le I_1,
\dots,
1 \le i_m \le I_m
$.
A   tensor is called    symmetric if its elements remain invariant under any permutation of
the  indices \cite{kolda}.
Let    $  T^{m}(\mathbb R^{p}) $ 
denote   the  space  of  all  such  real  symmetric    tensors
of order $m$ and dimension $p$.
Given an  $m$th-order  $p$-dimensional symmetric  tensor  $\mathcal S $ and  a  vector $ \mathbf v  \in \mathbb {R}^{p \times 1}$,
we simply denote \cite{Cuicf}
\begin{equation*}
\mathcal S \mathbf  v^{m} :=
\sum\limits_{i_1,i_2,\dots,i_m=1}^{p}
s_{i_1,i_2,\dots,i_m}  v_{i_1} \dots   v_{i_m},
\end{equation*}
and  $   \mathcal S \mathbf v^{m-1}   $   denotes    a
$p$-dimensional
column
vector,  whose  $j$th  element   is
\begin{equation*}
(\mathcal S \mathbf v^{m-1})_{j} =
\sum\limits_{i_2,\dots,i_m =1}^{p}
s_{j,i_2,\dots,i_m}  v_{i_2} \dots   v_{i_m}.
\end{equation*}
Furthermore,   $   \mathcal S \mathbf v^{m-2}   $   is   a
$p  \times  p $  matrix,    whose  $(i, j )$th  element   is
\begin{equation*}
(\mathcal S \mathbf v^{m-2})_{i, j} =
\sum\limits_{i_3,\dots,i_m =1}^{p}
s_{i,j,i_3,\dots,i_m}  v_{i_3} \dots   v_{i_m}.
\end{equation*}

$\triangledown$ is  the  gradient  operator.
$ \operatorname{null}  ( \mathbf A) $
denotes the null space  of $\mathbf A$.
Given   a  matrix
$ \mathbf A  \in \mathbb {R}^{p_1 \times p_2} $   and     assuming  that
$ \mathbf A $  is   of  full   column  rank ($  p_2  \le  p_1$),
the orthogonal  complement   projection matrix of  $ \mathbf A $
is  denoted
$ \mathbf {P}^{\bot}_{  \mathbf A }
=
\mathbf I_{p_1}-\mathbf A (\mathbf A^{\mathrm T} \mathbf A)^{-1} \mathbf A^{\mathrm T}$,
where $ \mathbf I_{p_1} $ is  a  $ {p_1}  \times  {p_1} $ identity matrix.
$\mathbf  1_{p}$   denotes   a  $ p  \times  1$ column vector.
$\mathbf  J_{p_1 \times p_2}$    and  $\mathbf  J_{p}$     are
a   $ p_1  \times  p_2 $  matrix  and   a   $ p  \times  p$    square    matrix,     with all  elements equal to 1,  respectively.
It holds   that  \cite{zhang2017matrix}
\begin{equation}\label{Jproperty}
\mathbf  J_{p_1 \times p_2} = \mathbf  1_{p_1}  \mathbf  1_{p_2}^{\mathrm {T}}.
\end{equation}
Similarly,  $\mathbf  0_{p}$,  $\mathbf  O_{p_1 \times p_1}$    and  $\mathbf  O_{p}$   are   matrices   with all  elements equal to 0.
$\operatorname{diag}(\mathbf{u})$   is an operator which maps the vector  $  \mathbf u  \in  \mathbb R^{n \times 1} $   to
an  $ n \times n $
diagonal matrix 
whose diagonal elements are those of    $  \mathbf u$.
$\circledast$ 
denotes  the  Hadamard product, an  element-wise   multiplication  operation where $   (\mathbf  {A} \circledast  \mathbf  {B})_{ij} =
\mathbf  A_{ij}\mathbf  B_{ij}$.
For simplicity,  we use
$ \mathbf A ^{\circledast^{p}}$
and
$ \mathbf a ^{\circledast^{p}}$
to denote the $p$ -fold   element-wise   multiplication  of the matrix
$ \mathbf A  $ and  of  the vector $ \mathbf a $,  respectively.

\begin{wldefinition}[Spectral  radius]\label{radiusdefinition}
	The  spectral  radius  of    a  square  matrix  $\mathbf B \in \mathbb {R}^{p \times p}$  is  the   maximum  value among  
	the  absolute   values
	of  all    eigenvalues  of
	$\mathbf B$,  denoted  by
	$ \rho (\mathbf B) = \max_i  \vert  \sigma_{i}(\mathbf B) \vert$,
	where
	$ \sigma_{i}(\mathbf B)  ,  i=1,2,\dots, p $
	are   $p$ eigenvalues of
	$\mathbf B$.
\end{wldefinition}

\begin{wldefinition}[The  outer product]
	\label{outerprod}
	Given   $m$  vectors
	$ \mathbf a^{(i) } \in \mathbb {R}^{I_i \times 1}$
	($i=1,2, \dots, m$),
	their   outer  product
	$ \mathbf a^{(1) }
	\circ
	\mathbf a^{(2) }
	\circ  \dots
	\circ
	\mathbf a^{(m) }
	$
	is   an    $ m$th-order  tensor  denoted $   \mathcal A$,   of size
	$ I_1 \times I_2  \times \dots \times I_m  $.
	When
	$  \mathbf a^{(1) }
	=   \dots
	=
	\mathbf a^{(m) }
	=\mathbf a $,  we denote
	$ \mathcal A =  \mathbf a^{\circ m}$  for  simplicity.
\end{wldefinition}

\subsection{Optimization  theories of  Tensor eigenpairs}
In this part,  we briefly  introduce the  optimization theories   related to the tensor  eigenpairs problem.
Given a  general   $m$th-order  $p$-dimensional symmetric  tensor  $\mathcal S $,
consider  the  following   constrained  optimization  model:
\begin{equation}\label{opti_ori}
\begin{cases}
\max\limits_{\mathbf v} \quad \mathcal S \mathbf v^{m} \\
\text{s.t.}  \quad \mathbf v^{\mathrm {T}}\mathbf v=1
\end{cases}.
\end{equation}
The Lagrangian function
for   (\ref{opti_ori})  is  defined as:
\begin{equation}\label{Lagrangian_function}
L(\mathbf v, \lambda) =
\frac {1}  { m}
\mathcal S \mathbf v^{m}
+
\frac { \lambda} {2} (1-  \mathbf v^{\mathrm {T}}\mathbf v).
\end{equation}
When  
$\triangledown_{\mathbf v}  L(\mathbf v, \lambda) =\mathbf 0$, i.e.,   the gradient of
$  L(\mathbf v, \lambda) $  with respect to
$ \mathbf  v $  is  $ \mathbf 0$,
the concept of  the eigenpair of a  symmetric  tensor
can  be  deduced,  which  was  independently    defined  by   Lim  and  Qi  in  2005:

\begin{wldefinition} [Eigenpairs of   a   symmetric tensor \cite{qi,lim}] 
	Given a  tensor $\mathcal S   \in    T^{m}(\mathbb R^{p}) $,
	a pair
	$(\lambda ,\mathbf v )$
	is  an  eigenpair  of
	$\mathcal S  $
	if
	\begin{equation}\label{definition}
	\mathcal S \mathbf v^{m-1}=\lambda \mathbf v,
	\end{equation}
	where
	$ \lambda  \in  \mathbb C $
	is  the  eigenvalue and
	$ \mathbf v  \in   \mathbb C^{p \times  1} $
	is the  corresponding   eigenvector  satisfying
	$\mathbf v^{\mathrm {T}}\mathbf v=1 $.
\end{wldefinition}

\begin{wlremark}\label{rem.signold}
	In  the  related reference \cite{qi}, the solution
	that satisfies the  above definition over  $ \mathbb C $  is  termed an E-eigenpair.
	If both  eigenvector and eigenvalue  are  real, it is also termed  a    Z-eigenpair,
	which  corresponds to  a    Karush-Kuhn-Tucker (KKT) point of  (\ref{opti_ori}) over   $ \mathbb R$.
	Throughout this document,
	we focus on the Z-eigenpair  type.
\end{wlremark}

To simplify the sign  convention    for   eigenpairs analyzed later,
the definition of 
equivalence   class of eigenpairs, borrowed from \cite{hm},  is  presented as follows:
\begin{wldefinition}[Equivalence class of    eigenpairs]	\label{def.Equivalent}
	Let $ \mathcal{S} \in T^{m}(\mathbb{R}^{p}) $ be a real symmetric tensor of order $m$ and dimension $p$.
	The   equivalence  class   of     $ (\lambda,  \mathbf  v)  $   is  denoted
	\begin{equation}\label{equclass}
	[
	(\lambda,  \mathbf  v) ] :=
	\{
	(\lambda',  \mathbf  v')  \vert
	\lambda' =  t^{m-2}\lambda  ,
	\mathbf  v' =  t \mathbf  v ,
	t \in  \mathbb  C \backslash \{0 \}
	\}.
	\end{equation}
\end{wldefinition}

Then, for the Z-eigenpairs we discussed in this work, 
regarding  the  sign of an  eigenpair 	$(\lambda,  \mathbf  v)$,
we can conclude the following lemma:
\begin{wllemma}\label{lemma.sign}   
	Let $\mathcal{S} \in T^{m}(\mathbb{R}^{p})$ be a real symmetric tensor.
	Then, 	the following statements hold:	
	\\
	(1):   $(\lambda, \mathbf{v})$ is 
	a  Z-eigenpair of $\mathcal{S}$ if and only if $((-1)^{m-2}\lambda ,-\mathbf v )$ is 
	a    Z-eigenpair of $\mathcal{S}$. 
	\\
	(2):
	When 
	$m$  is odd, the eigenpair  \( (\lambda ,\mathbf v) \) of   \( \mathcal S\)  can always be chosen  such that  
	\( \lambda \geq 0 \); 
	when $m$ is even, and  $ \mathcal S$ is 
	generated by a finite number of  vectors,
	any Z-eigenpair  must satisfy \( \lambda \geq 0 \).	
\end{wllemma}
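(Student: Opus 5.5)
Part (1) is a direct check using homogeneity. The map $\mathbf v \mapsto \mathcal S \mathbf v^{m-1}$ is homogeneous of degree $m-1$. Suppose $\mathcal S \mathbf v^{m-1} = \lambda \mathbf v$ with $\mathbf v^{\mathrm T}\mathbf v = 1$. Then
\begin{equation*}
\mathcal S(-\mathbf v)^{m-1} = (-1)^{m-1}\lambda \mathbf v = (-1)^{m-2}\lambda\,(-\mathbf v),
\end{equation*}
and $(-\mathbf v)^{\mathrm T}(-\mathbf v) = 1$. Both $(-1)^{m-2}\lambda$ and $-\mathbf v$ are real whenever $\lambda$ and $\mathbf v$ are, so $((-1)^{m-2}\lambda,-\mathbf v)$ is again a Z-eigenpair. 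The converse follows by applying the same map a second time, since it is an involution. This is exactly the case $t=-1$ of Definition \ref{def.Equivalent}, restricted to real scalings that preserve unit length.

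For the first claim of part (2), let $m$ be odd. Then $(-1)^{m-2} = -1$, so by part (1) the pairs $(\lambda,\mathbf v)$ and $(-\lambda,-\mathbf v)$ are both Z-eigenpairs and lie in the same class. If $\lambda<0$, we take the representative $(-\lambda,-\mathbf v)$, which has a nonnegative eigenvalue.

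For the second claim of part (2), let $m$ be even. I read ``generated by a finite number of vectors'' as $\mathcal S = \sum_{i=1}^{r} \mathbf a_i^{\circ m}$ with real $\mathbf a_i$ and positive (here unit) weights. This is the form of the regular simplex tensor introduced later. The idea is to contract the eigen-equation with $\mathbf v$ itself. Multiplying $\mathcal S \mathbf v^{m-1} = \lambda \mathbf v$ on the left by $\mathbf v^{\mathrm T}$ and using $\mathbf v^{\mathrm T}\mathbf v = 1$ gives $\lambda = \mathcal S \mathbf v^{m}$. For the generated tensor,
\begin{equation*}
\mathcal S \mathbf v^{m} = \sum_{i=1}^{r} (\mathbf a_i^{\mathrm T}\mathbf v)^{m} \ge 0,
\end{equation*}
because $m$ is even and each $\mathbf a_i^{\mathrm T}\mathbf v$ is real. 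Hence $\lambda \ge 0$ for every Z-eigenpair, whatever sign representative is chosen. This is consistent with part (1), since for even $m$ the sign flip leaves $\lambda$ unchanged.

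The only delicate point is the hypothesis in the even case. The argument needs nonnegative coefficients in the rank-one expansion; if the weights could be negative, the conclusion would fail. I would therefore state explicitly that ``generated by'' means a positive combination $\sum_i \mathbf a_i^{\circ m}$, which is the case for the regular simplex tensor. Everything else is routine.
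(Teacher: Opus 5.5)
Your proposal is correct and follows essentially the same route as the paper: part (1) is the $t=-1$ instance of the sign equivalence, which you verify directly from the homogeneity of $\mathbf v\mapsto\mathcal S\mathbf v^{m-1}$ where the paper only appeals to Definition \ref{def.Equivalent}. For part (2), the odd case takes the representative with $\lambda\ge 0$, and the even case uses $\lambda=\mathcal S\mathbf v^{m}=\sum_i(\mathbf r_i^{\mathrm T}\mathbf v)^{m}\ge 0$ for $\mathcal S=\sum_{i=1}^{N}\mathbf r_i^{\circ m}$, which is exactly the reading of ``generated by a finite number of vectors'' (unit, hence nonnegative, weights) that you state explicitly.
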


\begin{proof}	
	The first claim can be easily verified 	based on  
	the equivalence class of  eigenpairs defined in (\ref{equclass}).
	When restricting to Z-eigenpairs,	$ t $ reduces to  $t \in  \mathbb  R \backslash \{0 \}$
	with $ t^2 =1$ by the unit  length condition, which yields the claim.
	In this sense, we usually select one of them for discussion.
	When  $  m$ is odd, for
	$(\lambda,\mathbf v)$   and   $(-\lambda,-\mathbf v)$,  without  loss   of 	generality,
	we    always   take   eigenpairs   with  $ \lambda \ge 0$;
	when  $  m$ is even,    Z-eigenpairs   appear  in  $\pm$ pairs  with  the same  eigenvalue. 
	For $ \mathcal S  \in T^{m}(\mathbb{R}^{p})$
	generated by a finite number of vectors 
	of  the form 
	$	\mathcal{S}:=\sum_{i=1}^{N}\mathbf{r}_{i}^{\circ m} $ which is a  generalized  form of  (\ref{simplextensor}), where  $ \mathbf{r}_{i}   \in \mathbb{R}^{p \times 1 }$ 
	is the generated  vector,  and $N$ is the number of vectors, 
	it also  holds  that 
	$ \lambda  = \mathcal S  \mathbf v^{m}    =
	\sum_{i=1}^{N} ( \mathbf  {r}_{i}^{\mathrm T} \mathbf v)^{ m} \ge 0$
	for even $m$.
	This proves the second claim. 
	The proof is complete.     $\blacksquare$
\end{proof}

\begin{wlremark}\label{rem.signnew}
	Up to sign equivalence, 
	we usually only select one of them as the representative for  analysis, 
	which is  conventionally adopted in previous works \cite{hm,RobustEigen,Cuicf,upperbound}.
	For convenience,
	for the tensor
	generated by a finite number of vectors,  which is a  generalized  form of  the regular simplex tensor discussed in this work, 
	we always select the pair  $(\lambda,\mathbf v)$  with $\lambda \ge 0$   for discussion. 
	For the focused regular simplex tensor to be discussed later, 
	we also provide a detailed 
	parameter   selection to guarantee such a  setting  in Lemma  \ref{rem.sign.alpha}.
\end{wlremark}

To further identify the local optimality of an  eigenpair,
the knowledge of  optimization theory
tells us that   it is necessary to utilize
the    second-order derivation
of
$  L(\mathbf v, \lambda) $
with respect to  $ \mathbf v $,
termed  the Hessian matrix of
(\ref{Lagrangian_function}).
It  is   denoted
by
\begin{equation}\label{hessian_matrix}
\mathbf H(\mathbf v) :=  \triangledown_{\mathbf v\mathbf v}^{2} L(\mathbf v, \lambda) = (m-1)\mathcal S \mathbf v^{m-2} - \lambda \mathbf I_{p} ,
\end{equation}
where
$ \mathbf I_{p} $
is  a   $ p  \times  p $  identity matrix.
Since   (\ref{opti_ori}) is a constrained model,
besides the Hessian term,
the constraint should be further considered.
Then,
the
locally    optimal   solutions of   (\ref{opti_ori})
can  be  identified  by
checking the  negative
semi-definiteness
of the  following  matrix:
\begin{align}\label{Mhess}
\mathbf{K }  : = \mathbf {K}(\mathbf v)
& =  \mathbf  P_{\mathbf  v}^{\bot} \mathbf H (\mathbf v)   \mathbf  P_{\mathbf  v}^{\bot}
\end{align}
where
$ \mathbf  P_{\mathbf  v}^{\bot}
=
\mathbf  I_{p}  -
\mathbf v\mathbf  v^{\mathrm T}$
is  the orthogonal  complement   projection matrix of  $ \mathbf v $.
We conclude the following lemma:
\begin{wllemma}\label{lem.LOCALMAX}
	For	 a   Z-eigenpair
	$(\lambda ,\mathbf v )$   of    $\mathcal S   \in    T^{m}(\mathbb R^{p}) $,
	if it holds that
	\begin{equation}
	\triangledown_{\mathbf v}  L(\mathbf v, \lambda) = \mathbf 0,
	\quad     
	\mathbf K   \preceq 0,
	\end{equation}
	and  $\mathbf K$    has    only one  zero eigenvalue, 
	it  is a locally maximized
	eigenpair  of   $\mathcal S$.
\end{wllemma}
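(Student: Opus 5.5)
The plan is to reduce the statement to the classical second-order sufficient condition for equality-constrained problems, applied to model (\ref{opti_ori}). The single constraint is $h(\mathbf v)=\mathbf v^{\mathrm T}\mathbf v-1$ with gradient $2\mathbf v\neq\mathbf 0$, so LICQ holds at every feasible point. The first hypothesis, $\triangledown_{\mathbf v}L(\mathbf v,\lambda)=\mathbf 0$, is exactly the KKT condition $\mathcal S\mathbf v^{m-1}=\lambda\mathbf v$. It therefore only remains to show that the Hessian $\mathbf H(\mathbf v)$ in (\ref{hessian_matrix}) is negative definite on the tangent space $\mathbb T=\{\mathbf d:\mathbf v^{\mathrm T}\mathbf d=0\}=\operatorname{range}(\mathbf P_{\mathbf v}^{\bot})$.

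\textbf{Step 1: identify the zero eigenvalue of $\mathbf K$.} Since $\mathbf P_{\mathbf v}^{\bot}\mathbf v=\mathbf 0$, we have $\mathbf K\mathbf v=\mathbf 0$. So $\mathbf v$ always spans a zero eigendirection of $\mathbf K$, and this direction is normal to the constraint surface. The hypothesis that $\mathbf K$ has only one zero eigenvalue then forces $\operatorname{null}(\mathbf K)=\operatorname{span}\{\mathbf v\}$.

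\textbf{Step 2: restrict to the tangent space.} $\mathbf K$ is symmetric and maps $\mathbb T$ into itself, so it has an orthonormal eigenbasis consisting of $\mathbf v$ together with a basis of $\mathbb T$. By Step 1 and $\mathbf K\preceq 0$, every eigenvalue on $\mathbb T$ is strictly negative. For any nonzero $\mathbf d\in\mathbb T$ we have $\mathbf P_{\mathbf v}^{\bot}\mathbf d=\mathbf d$, hence $\mathbf d^{\mathrm T}\mathbf H\mathbf d=\mathbf d^{\mathrm T}\mathbf K\mathbf d<0$. Thus $\mathbf H$ is negative definite on $\mathbb T$, which is precisely the second-order sufficient condition.

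\textbf{Step 3: conclude.} I would invoke the standard theorem (e.g., Nocedal--Wright, Theorem 12.6) to conclude that $\mathbf v$ is a strict local maximizer of $\mathcal S\mathbf v^{m}$ on the unit sphere. The Lagrangian here is scaled by $1/m$, but that scaling does not affect local optimality.

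If a self-contained argument is preferred, one can use the sphere's explicit geometry instead. Parametrize nearby feasible points as $\mathbf w=(\mathbf v+\mathbf d)/\|\mathbf v+\mathbf d\|$ with $\mathbf d\in\mathbb T$. On the sphere $L(\cdot,\lambda)$ agrees with $\frac1m\mathcal S\mathbf w^{m}$, and $\triangledown L=\mathbf 0$. A second-order Taylor expansion of $L$ at $\mathbf v$ along $\mathbf w-\mathbf v=\mathbf d+O(\|\mathbf d\|^{2})$ then gives
\[
f(\mathbf w)-f(\mathbf v)=\tfrac12\mathbf d^{\mathrm T}\mathbf H\mathbf d+o(\|\mathbf d\|^{2})\le -c\|\mathbf d\|^{2}+o(\|\mathbf d\|^{2}),
\]
which is negative for small nonzero $\mathbf d$, where $-c<0$ is the largest eigenvalue of $\mathbf K$ restricted to $\mathbb T$.

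The only delicate point is the bookkeeping in Step 1, namely recognizing that the single zero eigenvalue is the one forced by $\mathbf v$ itself. Once that is in place, the rest is routine.
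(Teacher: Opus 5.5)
Your proof is correct and takes the same route as the paper: reduce to the second-order sufficient condition (Theorem~\ref{second_order_necessary}) on the tangent space $\{\mathbf d:\mathbf v^{\mathrm T}\mathbf d=0\}$, and read that condition off from $\mathbf K$. The one difference is that you verify the link yourself, via $\mathbf K\mathbf v=\mathbf 0$ and $\mathbf d^{\mathrm T}\mathbf H\mathbf d=\mathbf d^{\mathrm T}\mathbf K\mathbf d$ for $\mathbf d\in\operatorname{range}(\mathbf P_{\mathbf v}^{\bot})$, whereas the paper goes through the QR-based projected Hessian and cites Lemma~3 of \cite{wang2024locally}; the direction the paper states ($\mathbf P\prec 0\Rightarrow\mathbf K\preceq 0$ with one zero eigenvalue) is actually the converse of what the lemma needs, so your Steps~1--2 supply the required implication cleanly (the factor $\tfrac12$ you drop in the Taylor bound is harmless).
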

The  detailed  explanation for  the
derivation of
(\ref{Mhess}) and Lemma  \ref{lem.LOCALMAX}
can  be found in    the Appendix part.

\subsection{Robust  eigenpairs of  symmetric tensors}\label{robustsection}

One of  the  widely used  algorithms  to  obtain  tensor   eigenpairs  is  called  the  tensor  power   method, which
is  based on  the  following  mapping  
\begin{equation}
\phi(\mathbf{v}) :=\frac{\mathcal S  \mathbf{v}^{m-1}}{\left\|\mathcal S  \mathbf{v}^{m-1}\right\|},
\end{equation}
and  performs  the    iterative   scheme:
\begin{equation}\label{tenpower}
\mathbf  {v}_{k} \mapsto \frac{\mathcal{S}  \mathbf{v}_{k-1}^{m-1}}{\left\|\mathcal{S}  \mathbf{v}_{k-1}^{m-1}\right\|}
,
\end{equation}
and the  initial    $\mathbf  {v}_{0}$
is a randomly generated unit-length vector.

Regarding the mapping  $\phi(\mathbf{v}) $,
an interesting and 
widely researched   issue is to 
check  whether  the solution is robust or not.
A robust eigenvector of
$ \mathcal{S} $    is an eigenvector $\mathbf{v}$ that is an attracting fixed point of the tensor power method \cite{RobustEigen}.
Based on  Lemmas 3.2 and 3.3  of \cite{RobustEigen}, the robustness can be attributed to   calculating  the spectral radius of the Jacobian
matrix   $ \mathbf{J}$
of  $\phi(\mathbf  v) $,  which  is  
given  by
\begin{equation}\label{Jacobianmatirx}
\mathbf{J} := \mathbf{J}(\mathbf{v}, \lambda)=\frac{m-1}{\lambda}\left(\mathcal{S}
\mathbf{v}^{m-2}- \lambda  \mathbf{v} \mathbf v^{ \mathrm T} 
\right).
\end{equation}

Two details are emphasized here for   the   robustness issue.  
1): (\ref{Jacobianmatirx})	implies that eigenpairs with  $\lambda = 0$ 
are excluded from the definition of robustness,  because the Jacobian matrix for $\lambda = 0$   is undefined. 
Such eigenpairs lie outside the scope of the robustness analysis.
2):  the  convergence   set of  (\ref{tenpower})  differs for odd and even $m$.
For even  $m$, both 	$(\lambda,\mathbf v)$   and   $(\lambda,-\mathbf v)$
are the fixed points; 
for odd $m$, 
only 	$(\lambda,\mathbf v)$  is  a fixed point. However,
the sign-equivalent representative  $(-\lambda,-\mathbf v)$  is generally not a fixed point of the same map.
To avoid discussing odd and even $m$ separately in  the  later proof,  
our discussion on  the  robustness issue is also up to sign equivalence
and  we  only reserve  the pair  $(\lambda,\mathbf v)$   for discussion, similar to the selection in Remark  \ref{rem.signnew}. 
When the tensor is generated by a finite number of vectors, 
which serves as a generalized version of the regular simplex tensor discussed in this work,
Claim 2 in Remark \ref{lemma.sign}  implies that,  up to sign equivalence,  we can always select eigenpairs with non‑negative eigenvalues $\lambda \ge 0$ for both odd and even $m$.
Another justification for this choice    is that  when $m$ is even, 
if 	$(\lambda,\mathbf v)$ is a robust one, so is  $(\lambda,-\mathbf v)$.	
It holds  that
$
\mathbf{J}(
-\mathbf{v},\lambda)=\frac{m-1}{\lambda}\left(\mathcal{S}  (-\mathbf  v)^{m-2}- \lambda  (-\mathbf  v)  (-\mathbf  v)^{{\mathrm T }}
\right)
=
\mathbf{J}(
\mathbf{v},\lambda).
$
This implies that their robustness  are   synchronous,
and checking one of them is  sufficient.
Combining those two claims, we can always select the eigenpairs with $\lambda >0$ for the robustness issue.
Then, we formally provide the following definition of  the robustness issue:

\begin{wldefinition}[Robust eigenpair]	\label{def.robust}
	Let $\mathcal{S} \in T^{m}(\mathbb{R}^{p})$ be a real symmetric tensor of order $m$ and dimension $p$ generated by a finite number of vectors. For the real Z-eigenpair $(\lambda, \mathbf{v})$ of $\mathcal{S}$ up to sign equivalence, with positive eigenvalue $\lambda > 0$, consider the tensor power method defined in (\ref{tenpower}),
	it    is called
	robust    if $\mathbf{v}$ is an attracting fixed point of (\ref{tenpower}).
	Equivalently, let   $ \mathbf{J}$
	be the Jacobian matrix of the mapping $ \phi(\mathbf{v}) $ at $ \mathbf{v}$ defined in (\ref{Jacobianmatirx}). Then $(\lambda,\mathbf{v})$ is robust if and only if
	\[
	\rho(\mathbf{J}  )< 1,
	\]
	where $\rho(\cdot)$ denotes the spectral radius.
\end{wldefinition}

\begin{wlremark}
	We emphasize  that the robustness  of   eigenpairs
	is strongly dependent on the adopted numerical algorithm.
	For example,  a  similar definition also appears for the Newton-type schemes in \cite{NCM}.
	In this work, 
	a robust eigenpair by default refers to that obtained from the tensor power iteration.
\end{wlremark}

\subsection{Regular  simplex  frame  and  tensor}
In this part,  we   introduce  a   special class of   tensors  termed
regular  simplex tensors.
For  convenience of  notation
in later parts,
we set the dimension of   the   regular  simplex  tensor to be $p=n-1$.
First,  the    definition of the  generalized  equiangular set  is  introduced as  follows,
which is borrowed from Definition 4.1 and 4.3  of   \cite{RobustEigen}:
\begin{wldefinition}[Equiangular set and equiangular tight frame]
	An equiangular set (ES) is a collection of vectors $\mathbf{w}_{1}, \ldots, \mathbf{w}_{r} \in \mathbb{R}^{n-1}$ with $r \geq n-1$ if there exists $\theta \in \mathbb{R}$ such that
	$$
	\theta=
	\left|
	\mathbf{w}_{i}^{\mathrm T} \mathbf{w}_{j}
	\right|,   1 \le i, j \le r, i \neq j   \quad \text { and } \quad\left\|\mathbf{w}_{i}\right\|=1,
	i=1,2, \dots, r.	$$
	Furthermore, an $\mathrm{ES}$ is  called  an equiangular tight frame (ETF)  if
	\begin{equation}
	\mathbf{W} \mathbf{W}^{\mathrm T}
	=\frac {r} { n-1} \mathbf{I}
	:=a \mathbf{I}, \quad
	\mathbf{W}:=\left(\mathbf{w}_{1},  \cdots,  \mathbf{w}_{r}\right) \in \mathbb{R}^{(n-1) \times r},
	\end{equation}
	where 	$\mathbf{w}_{i}, i=1,2 \dots, r$   are  called  the  vectors  in  the  equiangular tight frame.
\end{wldefinition}

Two  examples that satisfy the above definition are  given  as follows:
\begin{itemize}
	\item
	When   $r=n-1$ and $a=1$, the  orthonormal   basis
	$\mathbf{w}_{1}, \ldots, \mathbf{w}_{(n-1)}$
	forms
	an  ETF,  where $\theta =0$.
	Its  corresponding deduced tensor is termed  an  odeco tensor.
	\item
	When    $r=n$ and $a=\frac {n}{n-1}$,   $\left\{\mathbf{w}_{1}, \ldots, \mathbf{w}_{n}\right\} \in  \mathbb{R}^{(n-1) \times  n}$,
	if a stronger condition
	$\mathbf{w}_{i}^{\mathrm T} \mathbf{w}_{j} = -\frac{1}{n-1}$
	holds for any $i \neq j$,
	it  is   termed
	the   regular  simplex frame,
	where
	$ \theta =  \frac{1}{n-1}$.
	In   two-dimensional   space,
	the  regular simplex  frame    reduces to  a  regular  triangle.
\end{itemize}

The regular  simplex  tensor  is the core of this work.
In the following,  for notational convenience,
$r$ is fixed  as  $r=n$,
$\mathbf W $  is set    to be of size    $(n-1) \times  n$ and  always refers to the regular simplex frame.
Then,  	 the formal definition of
the 	regular  simplex  tensor    generated   by    its    frame $\mathbf W $
is given as follows:
\begin{wldefinition}[Regular simplex tensor]	\label{def.rst}
	Let $n   \ge 3$ and  	$\mathbf W  \in  \mathbb{R}^{(n-1) \times  n} $ be the
	regular simplex frame.
	For a given order $m \ge 3$, the  regular simplex tensor is defined as
	\begin{equation}\label{simplextensor}
	\mathcal{S}_{\mathbf W }:=\sum_{i=1}^{n} \mathbf{w}_{i}^{\circ m} ,
	\end{equation}
	where $\circ$  is the outer product   in Definition \ref{outerprod},
	and  	$\mathcal{S}_{\mathbf W }
	$ is  a   symmetric  tensor of order
	$ m $
	and dimension
	$ n-1$.
\end{wldefinition}

\begin{wlremark}\label{RemarkScope}
	Throughout this paper,    we   are only concerned with
	$n \ge 3, m \ge 3$  for (\ref{simplextensor}).
	The matrix case (corresponding to 	$ m = 2$)  is not included.
	In addition,
	the case  of 		$n = 3, m = 4$   turns   out   to be  a very  special combination,
	whose corresponding   objective  value, i.e.,
	$
	\mathcal S_{\mathbf W } \mathbf v^{m},
	$   is a constant.
	Such a  conclusion has been   discussed   in Theorem 4.7 of \cite{RobustEigen}
	and Theorem 12 of \cite{teneigenstructure}.
	Please  refer to them   for  details.
	In this special  case,
	any unit-length vector is an eigenpair of $
	\mathcal S_{\mathbf W }$ with the same eigenvalue  $\frac 9 8$. 
	In this sense, any vector  is both  locally maximized and minimized. 
	For the robustness issue,  based on the conclusion in Theorem 16 of  \cite{teneigenstructure},  no  eigenpair is  robust
	for this case. 
	Therefore, in the later analysis, 
	this special case will always be discussed separately
	in both the  robustness and local optimality discussions.
	
\end{wlremark}

\subsection{Main focus of this work}\label{conjecture}

In this paper, we mainly analyze robust eigenpairs of 
the  regular simplex tensor and aim to  resolve the  Conjecture 4.8  
originated  in   \cite{RobustEigen},
which states that 
the only robust eigenvectors of  the   regular simplex tensor
are the vectors in the  regular simplex   frame.
Note that the above statement is also  understood   up to sign equivalence of eigenpairs 
as we emphasize in the introduction  and  Remark  \ref{rem.signnew}. 
In addition, this  statement may be less rigorous, 
since for some lower-dimensional settings, there are no robust eigenpairs.
For a more precise presentation of the above conjecture, and since it will be proved later in this work, we state it as the main theorem as follows:

\begin{theorem} \label{conjecturesimplex}
	Let $\mathcal{S}_{\mathbf{W}}\in T^{m}(\mathbb{R}^{n-1})$ be a regular simplex tensor with $n\geq 3$, $m\geq 3$. Then,  the following statements hold:	
	\\
	(1):  If 	 $(m,n) = (3,3)$, $(3,4)$, or $(4,3)$, there are no robust eigenpairs, either among the vectors in the regular simplex frame or elsewhere.
	\\
	(2):  For all other $(m,n)$ combinations, the only robust eigenvectors, considered up to sign  equivalence, 
	are   	 the  vectors $\mathbf{w}_1,\ldots,\mathbf{w}_n$ in the  regular simplex  frame $\mathbf W$.
\end{theorem}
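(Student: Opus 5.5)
The plan has three steps. First, show that robustness forces strict local maximality. Second, identify the local maximizers of $\mathcal S_{\mathbf W}\mathbf v^m$ on the unit sphere. Third, check the frame vectors directly.

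\emph{Step 1 (robust $\Rightarrow$ strict local max).} For a Z-eigenpair with $\lambda>0$, the Jacobian (\ref{Jacobianmatirx}) is symmetric and satisfies $\mathbf J\mathbf v=\frac{m-1}{\lambda}(\lambda\mathbf v-\lambda\mathbf v)=\mathbf 0$. Hence $\mathbf v$ is an eigenvector of $\mathbf J$ with eigenvalue $0$, and on $\mathbf v^{\bot}$ the matrix $\mathbf J$ acts as $\frac{m-1}{\lambda}\mathbf P_{\mathbf v}^{\bot}\mathcal S\mathbf v^{m-2}\mathbf P_{\mathbf v}^{\bot}$. Now suppose $\rho(\mathbf J)<1$. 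Then every eigenvalue of $(m-1)\mathbf P_{\mathbf v}^{\bot}\mathcal S\mathbf v^{m-2}\mathbf P_{\mathbf v}^{\bot}$ restricted to $\mathbf v^\bot$ lies in $(-\lambda,\lambda)$. Consequently $\mathbf K$ in (\ref{Mhess}) is negative definite on $\mathbf v^{\bot}$, and its only zero eigenvalue is the one in direction $\mathbf v$. By Lemma \ref{lem.LOCALMAX}, $\mathbf v$ is then a strict local maximizer of (\ref{opti_ori}). It therefore suffices to (a) show that the only local maximizers are $\pm\mathbf w_i$ (up to sign equivalence), and (b) test the robustness of $\mathbf w_i$ directly. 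The case $(m,n)=(4,3)$, where the objective is constant, is settled separately by Remark \ref{RemarkScope}.

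\emph{Step 2 (landscape).} I will reparametrize by $\mathbf u=\mathbf W^{\mathrm T}\mathbf v\in\mathbb R^n$. Since $\mathbf W\mathbf W^{\mathrm T}=\frac{n}{n-1}\mathbf I$ and $\mathbf W\mathbf 1_n=\mathbf 0$, the map $\mathbf v\mapsto\mathbf u$ is, up to scaling, an isometry onto $\mathbf 1_n^{\bot}$. Under it, (\ref{opti_ori}) becomes: maximize $\sum_i u_i^m$ subject to $\mathbf 1_n^{\mathrm T}\mathbf u=0$ and $\|\mathbf u\|^2=\frac{n}{n-1}$. The KKT conditions read $m u_i^{m-1}=\mu+\nu u_i$ for all $i$. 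A polynomial $mt^{m-1}-\nu t-\mu$ has at most three real roots, and at most two when $m$ is even, by Descartes' rule or convexity of $t^{m-1}$ on half-lines. So every critical point has coordinates taking only two or three values, with multiplicities $(k_1,k_2,k_3)$, and this is how I enumerate all eigenpairs.

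The second-order test uses the Lagrangian Hessian $\operatorname{diag}(m(m-1)u_i^{m-2}-\nu)$ on the tangent space $\{\mathbf d:\mathbf 1^{\mathrm T}\mathbf d=0,\ \mathbf u^{\mathrm T}\mathbf d=0\}$. The direction $\mathbf e_i-\mathbf e_j$ with $u_i=u_j$ lies in this tangent space. Hence if some value $a$ is shared by at least two coordinates and $m(m-1)a^{m-2}>\nu$, the point is not a local maximum. I will show that this always happens unless the configuration is $(1,n-1)$ with the singleton positive, i.e.\ $\mathbf u\propto(1,-\frac{1}{n-1},\dots)$, which corresponds to $\mathbf v=\mathbf w_i$. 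Doing this requires eliminating $\mu,\nu$ through the two constraints and comparing $m(m-1)a^{m-2}$ with $\nu$ for the largest shared value, handling odd and even $m$ separately. This step is the main obstacle. If a group-swap direction fails to give ascent in some configuration, I will fall back on two-parameter directions that mix two groups while preserving both constraints.

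\emph{Step 3 (frame vectors).} Using $\mathbf w_i^{\mathrm T}\mathbf w_j=-\frac{1}{n-1}$, we get $\lambda=1+(n-1)\left(-\frac{1}{n-1}\right)^m>0$ and
$$\mathcal S\mathbf w_i^{m-2}=\mathbf w_i\mathbf w_i^{\mathrm T}+\left(-\tfrac{1}{n-1}\right)^{m-2}\sum_{j\ne i}\mathbf w_j\mathbf w_j^{\mathrm T}.$$
Since $\sum_{j\ne i}\mathbf w_j\mathbf w_j^{\mathrm T}=\frac{n}{n-1}\mathbf I-\mathbf w_i\mathbf w_i^{\mathrm T}$, the matrix $\mathbf J$ acts on $\mathbf w_i^{\bot}$ as a scalar. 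This gives
$$\rho(\mathbf J)=\frac{(m-1)\,n}{(n-1)^{m-1}+(-1)^m},$$
so $\mathbf w_i$ is robust exactly when $(m-1)n<(n-1)^{m-1}+(-1)^m$. A direct check shows this fails for $(3,3)$ ($6\not<3$), $(3,4)$ ($9\not<8$), and $(4,3)$ (equality $9=9$). It holds for $(3,5)$ and $(4,4)$ and $(5,3)$ ($12<15$). For all larger $(m,n)$ it holds by monotonicity, since the right side grows exponentially in $m$ and polynomially faster in $n$.

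Combining the three steps gives the theorem. In the three exceptional cases, no frame vector is robust, and no other eigenvector can be robust because it is not a local maximum by Step 2, with $(4,3)$ covered by Remark \ref{RemarkScope}. In all remaining cases, the robust eigenvectors are exactly $\mathbf w_1,\dots,\mathbf w_n$.
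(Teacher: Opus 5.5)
Your architecture matches the paper's. Robustness forces $\mathbf K$ to be negative definite on $\mathbf v^{\bot}$, then comes a landscape analysis in the $\mathbf u$-variables, then a direct spectral computation at $\mathbf w_i$. Steps~1 and~3 are sound, apart from a slip at $(3,4)$, where both sides of your inequality equal $8$.

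The gap is in Step~2, which carries the whole uniqueness claim and which you only announce. First, your root count is reversed. For odd $m$ the exponent $m-1$ is even, $t^{m-1}$ is convex, and a critical point has at most two distinct coordinate values; three values occur only for even $m$.

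Second, and decisively, the statement you plan to prove is false. For even $m$, take $\mathbf v\propto\mathbf w_1-\mathbf w_n$, i.e.\ $\mathbf u\propto(1,0,\dots,0,-1)^{\mathrm T}$. This is a KKT point (with $\mu=0$). Its only repeated coordinate value is $0$, whose Hessian entry is $-\nu<0$, and for $n=3$ no value is repeated at all. So no swap direction $\mathbf e_i-\mathbf e_j$ gives ascent. Your fallback does not help either. If a tangent direction is supported on two groups with distinct values $a_1\neq a_2$, the constraints $\sum_{G_1}x_i+\sum_{G_2}z_j=0$ and $a_1\sum_{G_1}x_i+a_2\sum_{G_2}z_j=0$ force both group sums to vanish. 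The direction is then just a sum of within-group differences.

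What is missing is a treatment of the three-valued points $\mathbf u=[c\mathbf 1_p^{\mathrm T},d\mathbf 1_q^{\mathrm T},e\mathbf 1_s^{\mathrm T}]^{\mathrm T}$ with $e<d\le 0<c$ and $p=s=1$. Your swap test does dispose of every other configuration. For three-valued points with $p\ge 2$ or $s\ge 2$ it is even more elementary than the paper's Weyl-theorem argument. For $p=s=1$, however, the only possible ascent direction mixes all three groups, and its second-order sign cannot be decided without knowing the roots.

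The paper closes this with Lemma~\ref{lem.ps1}: when $p=s=1$ the middle root must be $d=0$, hence $\beta=0$ and $c=-e=1/\sqrt2$ in its normalization. For $m=4$ this follows from comparing $c+d+e=0$ (Vieta) with $c+(n-2)d+e=0$. For even $m\ge 6$ it follows from a coefficient comparison of $(r+q)^{m-1}+1$ against $(r+q+1)\sum_{i=0}^{m-2}r^i$. Once the point is pinned down, the tangent direction $(1,-\tfrac{2}{n-2}\mathbf 1_{n-2}^{\mathrm T},1)^{\mathrm T}$ gives second-order value $\alpha\bigl(2(m-2)-\tfrac{4}{n-2}\bigr)>0$ for $(m,n)\neq(4,3)$. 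The paper reaches the same conclusion through its explicit $\mathbf A+\mathbf B$ splitting of $\mathbf M$. Without an argument of this kind, uniqueness in part~(2) is not established for even $m$.
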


In the following, we would like to
discuss the existing works
and  	 the deficiency  of  	 the  current strategy.
In addition, the implication of resolving this   theorem
will also be discussed in Section
\ref{futurework}.

(1)
Existing works:
In \cite{RobustEigen}, the authors only showed   that
the vectors in the  regular simplex  frame    are indeed  robust 
for the tensor of  dimension $n-1$ and order $m$, with $n \ge 3$, $m \ge 3$, and $n+m \ge 8$.
(See Theorem 4.6 of \cite{RobustEigen} for details.)
However,
the uniqueness
was not justified, which is then presented  as the   aforementioned  Conjecture 4.8. 
For this purpose,
one must check  the robustness of  all  eigenpairs.
Such a strategy was adopted in a later work  \cite{teneigenstructure}.
However,
they only provided the  proof for  $n=3$,
and the experimental    justification for   $n=4$.
For    larger  $n$, it is a tough task to prove.

(2) Difficulties of   the  strategy in  \cite{teneigenstructure}:
there are mainly  two  difficulties. First,
enumerating  all eigenpairs
is computationally  expensive   as $n$ and $m$ become larger, since the number
increases  exponentially.
In addition,
for most  eigenpairs $
\mathbf v$, it will be difficult to
determine the result of
$
\mathbf v^{\mathrm T} \mathbf{w}_{i}$
in (\ref{swj})
and  that of $\lambda$
in (\ref{lmdwj}),
thus
making it difficult to derive   an explicit form
for 
$ \mathbf J (\mathbf  v)$.

To the best of our knowledge,  so far,
the  references \cite{RobustEigen,teneigenstructure} are the only two works
that  focus on  identifying  the  robust  eigenpairs of  the   regular  simplex  tensor.
In this paper,  
building upon the two works,
we proceed further  on  the above    theorem
and
finally complete the proof.
The details are as follows.

\section{Relation  between   robust and locally maximized eigenpairs}\label{Relationonrobust}

In this part,  we  would  like  to  provide  an  auxiliary   criterion   for  robustness  checking.
It should be emphasized that the following conclusions     are not limited only to   the 
regular simplex tensor $ \mathcal{S}_{\mathbf W }$,  
but can be  established   for any type of symmetric tensor 	generated by a finite number of samples.
Therefore, we set the dimension of  the   tensor to be general $p$ in this part.   
Based on the definition  \ref{def.robust}  for the robustness issue,
the assumption $\lambda > 0$ will be adopted 
in the following two lemmas.

It  can  be  observed   by   comparing    (\ref{Jacobianmatirx})  with   (\ref{hessian_matrix})
that  both  of  them   contain     terms  $\mathcal{S}  \mathbf{v}^{m-2}$  and  $\mathbf v\mathbf v^{\mathrm T}$,  and
by  further   investigating  their  relationship,  the  following    lemma  can  be   established.

\begin{wllemma}\label{RobustLocalpre}
	Given  an  eigenpair
	$(\lambda ,\mathbf v )$  of   a  tensor $\mathcal S   \in    T^{m}(\mathbb R^{p}) $
	where $  \lambda > 0$,
	concerning
	the 	matrix  $\mathbf K$  in  (\ref{Mhess})
	and   the Jacobian
	matrix  $\mathbf J$  in  (\ref{Jacobianmatirx}),
	the following relationship holds:

	(1):
	$
	\lambda	\mathbf J =   	{\mathbf K }
	+\lambda  \mathbf  P_{\mathbf  v}^{\bot}$,
	where
	$ \mathbf  P_{\mathbf  v}^{\bot}
	$
	is  the orthogonal  complement   projection matrix of  $ \mathbf v $.

	(2):
	Let  	$ \mathbf  v_{1}^{\bot}, \dots,  \mathbf  v_{p-1}^{\bot} $
	be $p-1$ unit-length  vectors that are orthogonal to each other,
	and also to  	$	\mathbf v$,
	i.e.,
	$	\mathbf v^{\mathrm {T}}
	\mathbf  v_{i}^{\bot} =0,
	(	\mathbf  v_{i}^{\bot})^{\mathrm {T}}
	\mathbf  v_{j}^{\bot} =0$,
	for   $i,j=1, 2, \dots, p-1, i\neq j$. 	Denote
	$\mathbf V
	=
	[\mathbf v,   	 \mathbf  v_{1}^{\bot}, \dots,  \mathbf  v_{p-1}^{\bot}] \in
	\mathbb R^{p  \times  p} $
	the orthonormal matrix
	where  	$\mathbf V^{\mathrm T}  \mathbf V = \mathbf V\mathbf V^{\mathrm T}
	= \mathbf I_p$,
	$\mathbf V$   serves  as   the common  eigenvector  matrix  for
	$	\mathbf J, \mathbf K $ and  	$	 \mathbf  P_{\mathbf  v}^{\bot}$.

	(3):
	$(0,\mathbf v)$ is always one eigenpair for 	$	\mathbf J, \mathbf K $ and  	$	 \mathbf  P_{\mathbf  v}^{\bot}$.
	For the  remaining $p-1$  eigenpairs,
	$(\mu_{i}, \mathbf  v_{i}^{\bot})$ is an eigenpair of $\mathbf J$ if and only if
	$(
	\lambda(\mu_{i}-1), \mathbf  v_{i}^{\bot})$  is an eigenpair of $\mathbf K$,
	where $i=1, 2, \dots, p-1$.
\end{wllemma}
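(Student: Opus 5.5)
The plan is to reduce everything to one symmetric matrix, $\mathbf M:=\mathcal S\mathbf v^{m-2}\in\mathbb R^{p\times p}$, together with the single identity
\[
\mathbf M\mathbf v=\mathcal S\mathbf v^{m-1}=\lambda\mathbf v .
\]
This identity follows from the eigenpair definition (\ref{definition}). The matrix $\mathbf M$ is symmetric because $\mathcal S$ is symmetric, and $\mathbf v$ is a unit eigenvector of $\mathbf M$ with eigenvalue $\lambda$. Both $\mathbf J$ in (\ref{Jacobianmatirx}) and $\mathbf K$ in (\ref{Mhess}) are expressed through $\mathbf M$, $\mathbf v\mathbf v^{\mathrm T}$ and $\mathbf I_p$, so all three claims become short matrix computations.

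For claim (1), I expand $\mathbf K=\mathbf P_{\mathbf v}^{\bot}\big((m-1)\mathbf M-\lambda\mathbf I_p\big)\mathbf P_{\mathbf v}^{\bot}$. The key computation uses $\mathbf M\mathbf v=\lambda\mathbf v$, $\mathbf v^{\mathrm T}\mathbf M=\lambda\mathbf v^{\mathrm T}$ and $\mathbf v^{\mathrm T}\mathbf v=1$:
\[
\mathbf P_{\mathbf v}^{\bot}\mathbf M\mathbf P_{\mathbf v}^{\bot}
=\mathbf M-\lambda\mathbf v\mathbf v^{\mathrm T}-\lambda\mathbf v\mathbf v^{\mathrm T}+\lambda\mathbf v\mathbf v^{\mathrm T}
=\mathbf M-\lambda\mathbf v\mathbf v^{\mathrm T}.
\]
Since $\mathbf P_{\mathbf v}^{\bot}$ is idempotent, $\mathbf P_{\mathbf v}^{\bot}\mathbf I_p\mathbf P_{\mathbf v}^{\bot}=\mathbf P_{\mathbf v}^{\bot}$. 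Combining the two pieces gives
\[
\mathbf K=(m-1)\big(\mathbf M-\lambda\mathbf v\mathbf v^{\mathrm T}\big)-\lambda\mathbf P_{\mathbf v}^{\bot}.
\]
On the other hand, (\ref{Jacobianmatirx}) with $\lambda>0$ gives $\lambda\mathbf J=(m-1)(\mathbf M-\lambda\mathbf v\mathbf v^{\mathrm T})$. Comparing the two expressions yields $\lambda\mathbf J=\mathbf K+\lambda\mathbf P_{\mathbf v}^{\bot}$.

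For claim (2), the one subtle point is that an arbitrary orthonormal complement $\mathbf v_1^{\bot},\dots,\mathbf v_{p-1}^{\bot}$ will not diagonalize $\mathbf M$. The statement must therefore be read as asserting that such a basis can be chosen, and I regard this as the main point needing care. The argument is as follows. Because $\mathbf M$ is symmetric and $\mathbf v$ is an eigenvector, the orthogonal complement $\mathbf v^{\bot}$ is $\mathbf M$-invariant: if $\mathbf u\perp\mathbf v$, then $\mathbf v^{\mathrm T}\mathbf M\mathbf u=\lambda\mathbf v^{\mathrm T}\mathbf u=0$. The spectral theorem applied to $\mathbf M$ restricted to $\mathbf v^{\bot}$ then provides orthonormal $\mathbf v_i^{\bot}$ with $\mathbf M\mathbf v_i^{\bot}=\nu_i\mathbf v_i^{\bot}$. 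With this basis the eigenvalues can be read off directly:
\begin{itemize}
\item $\mathbf P_{\mathbf v}^{\bot}\mathbf v=\mathbf 0$ and $\mathbf P_{\mathbf v}^{\bot}\mathbf v_i^{\bot}=\mathbf v_i^{\bot}$;
\item $\mathbf J\mathbf v=\frac{m-1}{\lambda}(\lambda\mathbf v-\lambda\mathbf v)=\mathbf 0$ and $\mathbf J\mathbf v_i^{\bot}=\frac{(m-1)\nu_i}{\lambda}\mathbf v_i^{\bot}$;
\item $\mathbf K\mathbf v=\mathbf 0$ and $\mathbf K\mathbf v_i^{\bot}=\big((m-1)\nu_i-\lambda\big)\mathbf v_i^{\bot}$.
\end{itemize}
Hence $\mathbf V$ is a common orthonormal eigenvector matrix for $\mathbf J$, $\mathbf K$ and $\mathbf P_{\mathbf v}^{\bot}$, which is claim (2).

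Claim (3) then follows immediately. The pair $(0,\mathbf v)$ was exhibited above for all three matrices. Applying claim (1) to $\mathbf v_i^{\bot}$ gives $\lambda\mu_i\mathbf v_i^{\bot}=\mathbf K\mathbf v_i^{\bot}+\lambda\mathbf v_i^{\bot}$. So $\mathbf v_i^{\bot}$ is an eigenvector of $\mathbf J$ with eigenvalue $\mu_i$ exactly when it is an eigenvector of $\mathbf K$ with eigenvalue $\lambda(\mu_i-1)$, and the converse direction is the same equation read backwards.
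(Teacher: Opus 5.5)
Your proof is correct and follows the paper's route. Like the paper, you use $\mathcal S\mathbf v^{m-2}\mathbf v=\lambda\mathbf v$ to expand $\mathbf K$ and compare it with $\lambda\mathbf J$, exhibit $\mathbf v$ as a common null vector, and read the eigenvalue correspondence off claim (1). Your handling of claim (2) is actually more careful than the paper's: the paper asserts that any orthonormal completion of $\mathbf v$ diagonalizes $\mathbf J$ and $\mathbf K$, whereas choosing the $\mathbf v_i^{\bot}$ as eigenvectors of $\mathcal S\mathbf v^{m-2}$ on the invariant subspace $\mathbf v^{\bot}$, as you do, is exactly what is needed.
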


\begin{proof}
	For (1),
	$ 	\mathbf {K} $   in  (\ref{Mhess})  is  rewritten  as  the  following  form:
	\begin{align}\label{equ.K.sim}
	\mathbf {K}
	& \nonumber
	=  \mathbf  P_{\mathbf  v}^{\bot} \mathbf H (\mathbf v)
	\mathbf  P_{\mathbf  v}^{\bot}
	=
	(\mathbf  I_{p}  -
	\mathbf v\mathbf  v^{\mathrm T} )
	[ (m-1)\mathcal S \mathbf v^{m-2} - \lambda \mathbf I_{p}]
	(\mathbf  I_{p}  -
	\mathbf v  \mathbf v^{\mathrm T} )
	\\  	\nonumber
	& =
	(\mathbf  I_{p}  -
	\mathbf v\mathbf v^{\mathrm T} )
	[
	(m-1)\mathcal S \mathbf v^{m-2}
	-
	\lambda \mathbf I_{p}
	-
	(m-2)   \lambda \mathbf v\mathbf v^{\mathrm T}
	]
	\\    	 	\nonumber
	&
	=
	(m-1)\mathcal S \mathbf v^{m-2}
	-
	\lambda \mathbf I_{p}
	-
	(m-1)\lambda \mathbf v \mathbf v^{\mathrm T}
	+
	\lambda	\mathbf v\mathbf v^{\mathrm T}
	\\
	&
	=
	(m-1)\mathcal S \mathbf v^{m-2}
	-
	\lambda \mathbf I_{p}
	-
	(m-2)\lambda \mathbf v \mathbf v^{\mathrm T}  ,
	\end{align}
	where in the last  equation, we utilize
	$ \mathcal S \mathbf v^{m-2}  	\mathbf v
	= \mathcal S \mathbf v^{m-1}
	=
	\lambda		\mathbf v.
	$
	By   comparing  with   (\ref{Jacobianmatirx}), it  holds  that
	\begin{align}\label{lmdJMI}
	\lambda	\mathbf J =   	{\mathbf K }
	+\lambda  (\mathbf  I_{p}  -
	\mathbf v \mathbf v^{\mathrm T} )
	=
	\mathbf K
	+\lambda    \mathbf  P_{\mathbf  v}^{\bot}.
	\end{align}

	For 	(2),
	it can be checked that
	\begin{equation}\label{Jacobianmatirxvec}
	\mathbf{J} \mathbf{v}
	=
	\frac{m-1}{\lambda}
	(
	\mathcal{S}  \mathbf{v}^{m-2}\mathbf{v}
	- \lambda  \mathbf{v} \mathbf{v}^{\mathrm T} \mathbf{v})
	=
	\frac{m-1}{\lambda}
	(
	\lambda  \mathbf{v}
	- \lambda  \mathbf{v})
	=
	0 \cdot
	\mathbf{v},
	\end{equation}
	where   we  reuse
	$ \mathcal S \mathbf v^{m-2}  	\mathbf v
	= \mathcal S \mathbf v^{m-1}
	=
	\lambda		\mathbf v$.
	In  a  	similar way,
	the  vector  $\mathbf v$
	will  simultaneously
	be the  eigenvector  	with eigenvalue 0 for   both
	$ \mathbf K$ and $  \mathbf  P_{\mathbf  v}^{\bot}$,
	which  holds 	that
	\begin{equation}\label{Kmatirxeig}
	\mathbf{K} \mathbf{v} =
	0 \cdot
	\mathbf{v},
	\quad
	\mathbf  P_{\mathbf  v}^{\bot}
	\mathbf{v} =
	0 \cdot
	\mathbf{v}.
	\end{equation}
	The above   results 	 mean
	that
	$  	\mathbf{v}$
	is a common  eigenvector of
	$ \mathbf{J},  \mathbf K,  \mathbf  P_{\mathbf  v}^{\bot} $ with eigenvalue 0.
	Therefore,  this further implies that the other eigenvectors lie  in the
	null  space of 	$  	\mathbf{v}$.
	By the formulation of
	$\mathbf V$ defined above,
	$	\mathbf J, \mathbf K $ and  	$	 \mathbf  P_{\mathbf  v}^{\bot}$ can be  diagonalized
	by 	$\mathbf V$  as    follows:
	\begin{align}\label{lmdJMIVdiag}
	\lambda  \mathbf V^{\mathrm T}
	\mathbf J
	\mathbf V=
	\mathbf V^{\mathrm T}
	[ 	{\mathbf K }
	+\lambda  \mathbf  P_{\mathbf  v}^{\bot}
	]
	\mathbf V.
	\end{align}

	For 	(3),
	$(0,\mathbf v)$ is always one eigenpair  for the three matrices, as   analyzed before.
	Note  that
	$
	\mathbf  P_{\mathbf  v}^{\bot}
	\in
	\mathbb R^{ p  \times  p}
	$
	is  a  projection  matrix  with  rank  $p-1$, and  its  eigenvalues are given by
	$ 1, 1,   \dots,  1,  0    $,  where
	the 	multiplicity of   eigenvalue   $ 1 $  is  $ p-1 $.
	If 	$(\mu_{i}, \mathbf  v_{i}^{\bot})$ is an eigenvalue of $\mathbf J$,
	(\ref{lmdJMIVdiag})
	implies that
	\begin{align}\label{lmdrelationequ}
	\lambda  \mu_{i}\mathbf  v_{i}^{\bot}
	=
	\lambda	  \mathbf J  \mathbf  v_{i}^{\bot}
	=
	(	\mathbf K
	+\lambda    \mathbf  P_{\mathbf  v}^{\bot})
	\mathbf  v_{i}^{\bot}
	=
	\mathbf K  	\mathbf  v_{i}^{\bot} + 	\lambda		\mathbf  v_{i}^{\bot},
	\end{align}
	meaning that	$(\lambda(\mu_{i}-1), \mathbf  v_{i}^{\bot})$  is an  eigenpair  of $\mathbf K$.
	Conversely,
	if
	$(\lambda(\mu_{i}-1), \mathbf  v_{i}^{\bot})$  is an  eigenpair   of $\mathbf K$,
	since $ \lambda\neq 0$, it also holds that $(\mu_{i}, \mathbf  v_{i}^{\bot})$ is an  eigenpair of $\mathbf J$.
	The proof is complete.
	$\blacksquare$
\end{proof}

With the eigenvalues relationship between
$\mathbf J$ and $\mathbf K$ at hand, we can further deduce the
relation on   robust and locally optimal   eigenpairs,
by discussing
as follows:
\begin{wllemma}\label{RobustLocal}
	Given  an  eigenpair
	$(\lambda ,\mathbf v )$  of   a  tensor $\mathcal S   \in    T^{m}(\mathbb R^{p}) $
	where $  \lambda > 0$,
	concerning
	the 	matrix  $\mathbf K$  in  (\ref{Mhess})
	and   the Jacobian
	matrix  $\mathbf J$  in  (\ref{Jacobianmatirx}),
	the following statements hold:

	(1): If  $   \mathbf K \succeq 0$,    it holds that  $ 		\rho (\mathbf J)	\ge 1 $.

	(2): If  $   \mathbf K $ is  indefinite,  it holds that  $	\rho (\mathbf J)	\ge 1 $.

	(3):
	If   $   \mathbf K \preceq 0$,
	and  two extra conditions hold: i) there is only  one zero  eigenvalue; ii) 
	the remaining  non-zero 	eigenvalues of  $   \mathbf K $ 
	lie  in the  open  interval
	$(-2 \lambda,0)$, 
	it holds that  $ 	\rho (\mathbf J) <  1 $.
	
	(4): Conversely to (3),
	if
	$  	\rho (\mathbf J) <  1 $,
	it holds that
	$ 	\mathbf K \preceq 0 $.
\end{wllemma}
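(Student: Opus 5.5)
The whole argument reduces to the spectral correspondence of Lemma \ref{RobustLocalpre}. Since $\mathbf K$ and $\mathbf J$ are simultaneously diagonalized by the orthonormal matrix $\mathbf V$, the spectrum of $\mathbf J$ consists of $0$ (on $\mathbf v$) together with the $p-1$ values $\mu_i = 1 + \kappa_i/\lambda$. Here $\kappa_i$ ranges over the eigenvalues of $\mathbf K$ on the orthogonal complement of $\mathbf v$. Note that $\mathbf K=\mathbf P_{\mathbf v}^{\bot}\mathbf H\mathbf P_{\mathbf v}^{\bot}$ is symmetric, so all these quantities are real, and $\rho(\mathbf J)=\max\{0,\max_i|1+\kappa_i/\lambda|\}$. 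The plan is to translate each hypothesis on the sign pattern of $\mathbf K$ into a statement about the $\kappa_i$, and then read off $\rho(\mathbf J)$ using $\lambda>0$.

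For (1), assume $\mathbf K\succeq 0$. Then every $\kappa_i\ge 0$, so every $\mu_i=1+\kappa_i/\lambda\ge 1$. Since $p\ge 2$, at least one such $\mu_i$ exists, and hence $\rho(\mathbf J)\ge 1$.

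For (2), assume $\mathbf K$ is indefinite. Then $\mathbf K$ has some strictly positive eigenvalue. Its eigenvector cannot be $\mathbf v$, because $\mathbf K\mathbf v=0$, so it is one of the $\mathbf v_i^{\bot}$ directions. The corresponding $\mu_i$ is therefore greater than $1$, and again $\rho(\mathbf J)>1\ge 1$.

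For (3), the assumptions are that $\mathbf K\preceq 0$, that zero is a simple eigenvalue, and that the remaining eigenvalues satisfy $\kappa_i\in(-2\lambda,0)$. Because the single zero eigenvalue is already accounted for by $\mathbf v$, all the $p-1$ complementary values $\kappa_i$ lie in $(-2\lambda,0)$. It follows that $\mu_i\in(-1,1)$, so $|\mu_i|<1$. Together with the eigenvalue $0$ of $\mathbf J$, this gives $\rho(\mathbf J)<1$. The only delicate point is this bookkeeping: I must confirm that the zero eigenvalue counted in hypothesis i) is exactly the one belonging to $\mathbf v$, so that no $\kappa_i=0$ occurs on the complement (which would give $\mu_i=1$).

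For (4), argue by contrapositive. If $\mathbf K\not\preceq 0$, then $\mathbf K$ has a positive eigenvalue, so either $\mathbf K\succeq 0$ or $\mathbf K$ is indefinite. In either case (1) or (2) gives $\rho(\mathbf J)\ge 1$, which proves (4). Alternatively, one can argue directly: $\rho(\mathbf J)<1$ forces $\mu_i<1$ for every $i$, hence $\kappa_i=\lambda(\mu_i-1)<0$, and together with $\mathbf K\mathbf v=0$ this gives $\mathbf K\preceq 0$.

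I expect no genuine obstacle. The realness of the spectrum follows from the symmetry of $\mathbf K$, and the positivity $\lambda>0$ is needed to preserve the inequality directions when dividing by $\lambda$.
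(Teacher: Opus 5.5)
Your proof is correct and follows the paper's argument. Both reduce everything to the correspondence $\mu_i = 1+\kappa_i/\lambda$ from Lemma \ref{RobustLocalpre} and read off $\rho(\mathbf J)$ case by case using $\lambda>0$; your contrapositive for (4) via (1)--(2) only repackages the paper's direct argument, which you also give.
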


\begin{proof}
	(1):
	When $ \mathbf K \succeq 0$,
	it holds that
	for any $i=1, 2, \dots, p-1$,
	$	\lambda(\mu_{i}-1) \ge  0$.
	The assumption $ \lambda >0$
	implies that
	$\mu_{i} \ge 1$,
	and
	\begin{align}	\rho (\mathbf J)
	=
	\max  \vert  \mu_{i}\vert
	\ge
	1 ,
	\end{align}
	which indicates
	that the corresponding  eigenpair  	 is not  robust.

	(2):
	When $ \mathbf K$ is indefinite,
	note that  the  number of  	 zero   eigenvalue is at least one, denoted $t \ge 1$.
	Furthermore,    assume that there are  $k$ positive  eigenvalues
	and  $p-t-k$ negative 	 eigenvalues
	where $k \ge 1, p-t-k \ge 1$.
	Denote the index set for   the  $k$ positive  eigenvalues  to be $\mathbb A$,
	and  $p-t-k$ negative  eigevalues   to be $\mathbb B$.
	For any $i \in \mathbb A$,
	$	\lambda(\mu_{i}-1) \ge  0$.
	The assumption $ \lambda >0$
	implies that
	$\mu_{i} \ge 1, i \in \mathbb A$.
	Similarly,
	$\mu_{i} \le 1, i \in \mathbb B$.
	Then
	\begin{align}	\rho (\mathbf J)
	=
	\max  \vert  \mu_{i}\vert
	=
	\max  (\vert  \mu_{i\in \mathbb A}\vert,   \vert  \mu_{i\in \mathbb B}\vert )
	\ge
	1 ,
	\end{align}
	which indicates
	that the corresponding  eigenpair   is not    robust.

	(3):
	When $ \mathbf K \preceq 0$,
	and there is only one  zero  eigenvalue,
	it holds that 
	$\lambda(\mu_i - 1) < 0$  for 	  the remaining $p-1$ eigenvalues of  $ \mathbf K$ where  $i = 1, 2, \dots, p-1$.
	The assumption $ \lambda >0$
	implies that
	$\mu_{i} <  1$.
	The condition  that  	 the remaining $p-1$ nonzero  eigenvalues of  $   \mathbf K $ 
	lie  in the  open  interval
	$(-2 \lambda,0)$  further  implies that
	\begin{equation}\label{add}
	\begin{cases}
	\max\limits_{i}  \lambda(\mu_{i}-1) < 0   
	\\
	\min\limits_{i}  \lambda(\mu_{i}-1) > -2 \lambda  
	\end{cases} , i=1, 2, \dots, p-1.
	\end{equation}
	This   implies that  $\max\limits_{i} 	\mu_{i}< 1,  	\min\limits_{i} \mu_i  >-1 $, and 
	\begin{align}	\rho (\mathbf J)
	=
	\max  \vert  \mu_{i}\vert
	<
	1 ,
	\end{align}
	which indicates
	that the corresponding  eigenpair  is     robust.

	(4):
	If  an eigenpair is  robust, it holds that 
	$
	\rho  (\mathbf J ) 	=
	\max  \vert  \mu_{i}\vert
	<
	1
	$.
	Since there is always  one  eigenpair with zero  eigenvalue for $\mathbf J$ and $\mathbf K$,
	this 	implies  that for the remaining $ p-1$ eigenvalues of  $\mathbf J$,
	it holds that  
	$  \mu_{i}  \in  (-1, 1), i=1, 2, \dots, p-1 $.
	By    claim  3)  in Lemma  \ref{RobustLocalpre},
	it necessarily holds for the remaining $ p-1$  eigenvalues of  $	\mathbf K$
	that
	$
	-2 \lambda < 	\lambda  \mu_{i} -	\lambda
	< 0 $, meaning that
	$\mathbf K$ is negative semi-definite, since $\mathbf K$ also has   one zero  eigenvalue.
	The proof is complete.  $\blacksquare$
\end{proof}

\begin{figure}
	\centering
	\begin{tikzpicture}[node distance=1.5cm]
	\node (kkt) [process]
	{All Eigenpairs};
	
	\node (max) [process, below of=kkt, xshift = -4cm]
	{Locally maximized ones};
	
	\node (sad) [process, below of=kkt]
	{Saddle ones};
	
	\node (min) [process, below of=kkt, xshift = 4cm]
	{Locally minimized ones};
	\node (rob) [process, below of=max]
	{Robust eigenpairs};
	
	\node (nonrob) [process, below of=sad, xshift = 2cm]
	{Non-robust eigenpairs };

	\draw [arrow] (kkt) -- (max);
	\draw [arrow] (kkt) -- (sad);
	\draw [arrow] (kkt) -- (min);
	
	\draw [arrow]   (max) -- node [right] {	(\ref{add})} (rob) ;
	\draw [arrow] (sad) -- (nonrob);
	\draw [arrow]  (min) -- (nonrob);
	
	\end{tikzpicture}
	\caption{	The classification of all eigenpairs,
		and   the  relationship between the locally optimal  and robust eigenpairs
		based on Lemma \ref{RobustLocal}.
	}
	\label{flow.robust.local}
\end{figure}

\begin{wlremark}[Implication for robustness checking]\label{rem.imp}
	By the above conclusions in the lemma,
	if		$(\lambda ,\mathbf v )$  is    a   locally  maximized    solution  of  model (\ref{opti_ori}) where  $	\mathbf K \preceq 0$
	and 	 $	\mathbf K $  has only one zero eigenvalue,
	with additional  condition defined  in (\ref{add})  for the remaining $p-1$ eigenvalues,
	it is  robust.
	The locally minimized and saddle solutions are always non-robust.
	Thus, given a  symmetric tensor (which is not limited to the special regular simplex  tensor),
	we do not need to analyze and determine the spectral radius
	of $\mathbf J$    for   all eigenpairs,   as  was done  in  \cite{teneigenstructure}.
	In contrast,
	it suffices to
	focus only on  the locally maximized
	eigenpairs.
	In other words,
	only the  locally maximized eigenpairs of a tensor  can   be  \textbf{potential
		candidates for robust eigenpairs}.
	Local maximality  is necessary but not sufficient for robustness.
	Thus, robustness is a strictly stronger property than local maximality. Consequently, to identify robust eigenpairs, it suffices to first isolate locally maximized eigenpairs and then check the additional spectral condition. Figure 1 summarizes the logical relationships.
\end{wlremark}

\section{Optimization Landscape}\label{optland}
Therefore,
the following aim   is to identify which solutions are locally maximized, for 
the  regular simplex tensor
$ \mathcal{S}_{\mathbf W }    \in    T^{m}(\mathbb R^{n-1})$,
where $n \ge 3, m \ge 3$.
Note that such an issue  has been widely studied recently, and
is  generally termed
"optimization landscape".
For example,  previous works have
investigated this subject for (fourth-order) tensor
eigenpair  decomposition \cite{optlandtensor}, and Tucker  decomposition \cite{optlandtucker}.
In this work, we pay attention to  the  regular simplex tensor.
For a better  structural   arrangement,
we  summarize   it as the following theorem.

\begin{theorem}\label{Theorem.local_eigenpair}
	Let $\mathcal{S}_{\mathbf{W}}\in T^{m}(\mathbb{R}^{n-1})$ be a regular simplex tensor with $n\geq 3$, $m\geq 3$. Then,
	for all  $(m,n)$ combinations
	except for $(m, n) =  (4, 3)$, the only locally maximized 
	eigenpairs, considered up to sign  equivalence, 
	are   the  vectors $\mathbf{w}_1,\ldots,\mathbf{w}_n$ in the  regular simplex  frame $\mathbf W$.	
\end{theorem}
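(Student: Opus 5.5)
The plan is to pass from $\mathbf v\in\mathbb R^{n-1}$ to the frame coordinates $\mathbf x:=\mathbf W^{\mathrm T}\mathbf v\in\mathbb R^{n}$. From $\mathbf W\mathbf W^{\mathrm T}=\frac{n}{n-1}\mathbf I$ and $\mathbf w_i^{\mathrm T}\mathbf w_j=-\frac1{n-1}$ $(i\neq j)$, one gets $\mathbf W^{\mathrm T}\mathbf W=\frac{n}{n-1}\mathbf I_n-\frac1{n-1}\mathbf J_n$. This is $\frac{n}{n-1}$ times the projector onto $\mathbf 1_n^{\bot}$, so $\mathbf W\mathbf 1_n=\mathbf 0$. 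Hence $\mathbf v\mapsto\mathbf x$ is, up to the factor $\sqrt{n/(n-1)}$, an isometry of $\mathbb R^{n-1}$ onto the hyperplane $\mathbf 1_n^{\mathrm T}\mathbf x=0$. Model (\ref{opti_ori}) for $\mathcal S_{\mathbf W}$ then becomes equivalent, with the same local optimality structure, to
\[
\max_{\mathbf x}\ f(\mathbf x)=\sum_{i=1}^n x_i^m\quad\text{s.t.}\quad \mathbf 1_n^{\mathrm T}\mathbf x=0,\quad \mathbf x^{\mathrm T}\mathbf x=\tfrac{n}{n-1}.
\]
Its KKT conditions read $m x_i^{m-1}=2\mu x_i+\nu$ for all $i$. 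So every coordinate of a critical point is a real root of the single polynomial $g(t)=m t^{m-1}-2\mu t-\nu$. For odd $m$, $g$ is strictly convex and has at most two real roots $t_1<t_2$, with $g'(t_1)<0<g'(t_2)$ in the simple-root case. For even $m$, $g'$ has at most two roots, so $g$ has at most three real roots $t_1<t_2<t_3$, with sign pattern $g'>0,<0,>0$. This enumeration of all eigenpairs by the pattern of coordinate multiplicities is the favorable structure I would use.

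The key second-order step uses the reduced Hessian on the tangent space $\{\mathbf d:\mathbf 1^{\mathrm T}\mathbf d=0,\ \mathbf x^{\mathrm T}\mathbf d=0\}$. The Hessian of the Lagrangian is $\operatorname{diag}\big(m(m-1)x_i^{m-2}-2\mu\big)=\operatorname{diag}(g'(x_i))$. If $x_i=x_j=t$ for some $i\neq j$, then $\mathbf d=\mathbf e_i-\mathbf e_j$ is tangent and gives curvature $2g'(t)$. So at a local maximum, every root taken by two or more coordinates must satisfy $g'(t)\le 0$. I would first handle the degenerate case $g'(t)=0$, which is a double root, separately by a higher-order or perturbation argument; I expect it only to occur in the exceptional $(4,3)$ situation.

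For odd $m$, this forces the multiplicity of $t_2$ to be at most one. Since $\sum x_i=0$ excludes a single repeated value, we get $\mathbf x=(t_2,t_1,\dots,t_1)$ up to permutation, and the norm constraint pins it to $\mathbf x=\mathbf W^{\mathrm T}\mathbf w_i=(1,-\frac1{n-1},\dots,-\frac1{n-1})$ up to permutation, i.e. $\mathbf v=\mathbf w_i$. For even $m$, the same argument leaves the candidates $(t_2,t_1,\dots,t_1)$, $(t_3,t_2,\dots,t_2)$ and $(t_1,t_3,t_2,\dots,t_2)$, with sign equivalence $\mathbf x\mapsto-\mathbf x$ identifying the first two. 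I would then check directly that the frame vectors are genuine local maxima. There the tangent space is $\{\mathbf d: d_i=0,\ \mathbf 1^{\mathrm T}\mathbf d=0\}$, on which the quadratic form equals $g'(-\frac1{n-1})\|\mathbf d\|^2$. Computing $\mu$ from the KKT equations at the two distinct values should show this is negative, so $\mathbf K\prec0$ off $\mathbf v$, as required by Lemma~\ref{lem.LOCALMAX}.

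The main obstacle is excluding the mixed even-$m$ pattern $(t_1,t_3,t_2,\dots,t_2)$, where both singleton values have $g'>0$ but the repeated value has $g'<0$. Here I would use a tangent direction supported on $\{1,2\}$ together with a uniform shift on the $t_2$-block. Writing $\mathbf d=(a,b,c,\dots,c)$ with $a+b+(n-2)c=0$ and $at_1+bt_3+(n-2)ct_2=0$ gives a one-dimensional family. The task is then to show $g'(t_1)a^2+g'(t_3)b^2+(n-2)g'(t_2)c^2>0$, using the relations among the roots ($t_1+t_3+(n-2)t_2=0$, $g(t_k)=0$, and the norm constraint). If this explicit inequality resists, I would try instead to show such points are saddles by a path argument comparing $f$ with nearby two-value configurations. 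Small $n$ (notably $n=3$, where the block is a single coordinate) needs separate treatment, and $(m,n)=(4,3)$ is excluded since $f$ is constant there.
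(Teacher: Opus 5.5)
Your route differs from the paper's and is simpler in several places. Viewing $\mathbf v\mapsto\mathbf W^{\mathrm T}\mathbf v$ as a scaled isometry onto $\mathbf 1_n^{\bot}$ makes the transfer of local optimality immediate, whereas the paper needs Lemmas~\ref{lemma.coincidence.KKT} and~\ref{lemma.wmw.k} for it.

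More importantly, your test directions $\mathbf e_i-\mathbf e_j$ inside a block of equal coordinates use the second-order necessary condition, which is valid because LICQ holds for $\mathbf x\perp\mathbf 1_n$, $\mathbf x\neq\mathbf 0$. In one line they exclude the two-value points with $k\ge 2$ and the three-value points with $p+s\ge 3$. The paper needs the full block spectrum of $\mathbf M$ for the former, and a Weyl-inequality argument built on Lemmas~\ref{Theorem_Gstructure} and~\ref{lemma.g.sign} for the latter.

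Since the necessary condition only demands $g'(t)\le 0$, double roots need no higher-order analysis. They cannot occur for odd $m$. For even $m$ the only configuration passing your test is the frame one, where $g'(-\tfrac1{n-1})<0$ is exactly the paper's inequality $l(m,n,1)<0$, valid off $(4,3)$.

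One slip: with your sign pattern $g'(t_1)>0$, the even-$m$ two-value candidate is $(t_1,t_2,\dots,t_2)$, not $(t_2,t_1,\dots,t_1)$. It is this one that $\mathbf x\mapsto-\mathbf x$ identifies with $(t_3,t_2,\dots,t_2)$.

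The genuine gap is the case you yourself call the main obstacle, $(t_1,t_3,t_2,\dots,t_2)$ for even $m$. You state the inequality that must hold but give no argument for it, and the fallback path argument is unspecified. As written, the proof therefore does not exclude these points.

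The missing idea, which is the content of the paper's Lemma~\ref{lem.ps1}, is that this configuration forces the middle root to vanish. First use $\mathbf x\mapsto-\mathbf x$ to assume $t_2\le 0$.
\begin{itemize}
\item For $m=4$, $g$ has no quadratic term, so $t_1+t_2+t_3=0$. Together with $t_1+t_3+(n-2)t_2=0$ this gives $(n-3)t_2=0$, hence $t_2=0$, since $n\ge 4$ when $m=4$.
\item For even $m\ge 6$, suppose $t_2<0$ and set $r=t_1/t_2>1$, $q=n-2$. The root equations at $t_1,t_2$ and at $t_3=-t_1-qt_2$ give $(r+q)^{m-1}+1=(r+q+1)\sum_{i=0}^{m-2}r^i$. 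This is impossible because the left side dominates coefficientwise.
\end{itemize}

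With $t_2=0$ you get $\nu=0$, $t_3=-t_1=\tau$ with $\tau^2=\tfrac{n}{2(n-1)}$, and $2\mu=m\tau^{m-2}$. Your one-dimensional family is then spanned by $\mathbf d=(1,1,-\tfrac{2}{n-2}\mathbf 1_{n-2}^{\mathrm T})^{\mathrm T}$. The curvature equals $\frac{2m\tau^{m-2}}{n-2}\bigl[(m-2)(n-2)-2\bigr]$, which is positive exactly when $(m,n)\neq(4,3)$. This is the same factor that appears in the paper's explicit eigenvalues, and $n=3$ needs no separate treatment.

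If you prefer to keep your original plan of not locating $t_2$, write $g(t)=m(t-t_1)(t-t_2)(t-t_3)s(t)$ and take $a=t_3-t_2$, $b=t_2-t_1$. The inequality then reduces to $(t_3-t_2)s(t_1)+(t_2-t_1)s(t_3)>(t_3-t_1)s(t_2)/(n-2)$, which follows from strict convexity of $s$. That convexity is itself a nontrivial fact for $m\ge 8$, and you would have to prove it.
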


As the detailed proof extends over multiple pages, it is postponed to Section
\ref{sec.proof.opt}.
Combining  Theorem
\ref{Theorem.local_eigenpair} and  Lemma \ref{RobustLocal}
will
resolve   two
difficulties
concerning the identification of robust eigenpairs   for  Theorem  \ref{conjecturesimplex}:

(1):  The number of
eigenpairs to be  checked
can be  greatly  reduced,  because 
the number of    locally maximized
eigenpairs is  exactly  equal to $n$, and    is   always no larger  than that of
all eigenpairs,  especially when $n$  and $m$ become   larger.

(2):
Naturally,
calculating the Jacobian matrix   for   most  eigenpairs
can be avoided.
Here we only focus on the  locally maximized
eigenpairs (the vectors in the  regular simplex  frame
$\mathbf v =\mathbf w_{j}, j=1, 2, \dots, n$),
whose Jacobian matrices are  easy   to  calculate, as can be seen   in
(\ref{swj})
and
(\ref{lmdwj}) in
the later part.

Thus,
the optimization landscape
presented  in Theorem  \ref{Theorem.local_eigenpair}
constitutes   the core of this work.
Once it is proved, the
proof
for 
Theorem
\ref{conjecturesimplex}
can be
easily finished.

\begin{wlremark}\label{remark.comparsion.robust}
	To this end,
	one may doubt
	the reason  for 
	analyzing local optimality
	for exploring   the robustness issue.
	Similarly,
	one  also needs to check all eigenpairs to  prove  Theorem
	\ref{Theorem.local_eigenpair}.
	However,
	as will be seen in the  later  proof,
	an equivalent  model  with  a  benign structure can be deduced
	to analyze  	 local   optimality,
	which cannot be applied if one directly deals with the robustness problem. To explain this intuitively with an explicit formula,
	we defer it to
	Remark \ref{remark.robust.formula} after we have
	finished the  proof.
	In this sense,
	even though  it is roundabout,
	efforts  in   answering local optimality
	will be
	helpful for  the 	robustness problem.
	In this process,  the  optimization landscape
	of the model
	also simultaneously   becomes   clear.
	An intuitive comparison  between local optimality and the robustness problem
	will be clearly  summarized in Table \ref{table.comparison.local.robust} later.
\end{wlremark}

\section{Proof for  Theorem \ref{Theorem.local_eigenpair} }\label{sec.proof.opt}
The  organization of the proof is   outlined  as  follows:
\begin{itemize}
	\item  in   Section  \ref{reformulated},
	we first reformulate an equivalent model for    the  regular simplex  tensor eigenpairs,
	which   has a  benign   structure
	and then will be  our focus in the  subsequent analysis.
	Their equivalence   is   theoretically  guaranteed by Lemma  \ref{lemma.coincidence.KKT}  and  \ref{lemma.wmw.k};
	\item
	in  Section   \ref{station}, we   deduce  the  structure  of  all KKT   points
	for the new model. 	
	Since the  combination
	of  $(m, n) =  (4, 3)$  serves as a special case of 
	Theorem \ref{Theorem.local_eigenpair}, it  is discussed separately in Lemma \ref{Theorem_structureofall.43} via Theorem 12 of \cite{teneigenstructure} and the developed Lemma \ref{lemma.coincidence.KKT}.
	Then,  we mainly focus on   
	the remaining combinations where $n \ge 3, m\ge3$, and $(m,n) \neq (4,3)$.
	The  structure  of  all KKT   points  for those combinations  is summarized in 	Lemma \ref{Theorem_structureofall}   
	by only utilizing    the first-order  KKT condition. Following that, 
	Lemma	\ref{vectorregularsimplexframe}
	further  specifies  the corresponding  form of  the vectors in the regular simplex  frame.
	\item
	in  Section   \ref{locallymaximized},
	to further check local optimality of each   KKT point in Lemma \ref{Theorem_structureofall},
	the  second-order    sufficient   condition
	is
	utilized  to proceed  with   the discussion.
	Several  cases that correspond to  different     structures in odd and even $m$  are separately
	discussed and arranged in three subsections,
	whose conclusions are 	presented in Lemma
	\ref{Theorem_structureoflocal},  \ref{Theorem_structureoflocaleven1}, 
	and  \ref{Theorem_structureoflocaleven2}, respectively.
	\item 	in  Section   \ref{proofmain},  by combining the above
	conclusions, 
	it is proved that the only locally maximized eigenpairs 
	are the vectors in the regular simplex frame when  $n \ge 3, m\ge3$, and $(m,n) \neq (4,3)$.
	With an   extra discussion on the special case of $(m,n) = (4,3)$
	and by  combining all cases,
	we finish the proof for    Theorem \ref{Theorem.local_eigenpair}.
\end{itemize}

For an intuitive overview, the       flowchart 
and  main theoretical results 
are   provided in Figure \ref{flow.proof_theorem}.
In addition, 
combining  Theorem
\ref{Theorem.local_eigenpair} and  Lemma \ref{RobustLocal}
to
resolve  
Theorem  \ref{conjecturesimplex}
is   also  shown intuitively.

\begin{figure}
	\centering
	\begin{tikzpicture}[node distance=1.5cm]
	\node (ori) [startstop]
	{Model (\ref{optmodelori}) \\
		($n \ge 3, m \ge3$)};
	
	\node (new) [process, below of = ori]
	{ Model	(\ref{optmodel}) \\ ($n \ge 3, m \ge3$)};

	\node (kkt43) [process, below of=new, xshift = -6cm]
	{
		Case 1:   
		$(m,n)=(4,3)$\\
		Lemma  \ref{Theorem_structureofall.43} };
	
	\node (kkt) [process, below of=new]
	{Case 2:   $(m,n) \neq (4,3)$ \\
		Lemma	\ref{Theorem_structureofall} 
	};
	
	\node (frame) [process, below of=kkt, xshift = 6cm]
	{Lemma	\ref{vectorregularsimplexframe}};
	
	\node (oddmkkt) [process, below of=kkt, xshift = -3cm]
	{ Odd $m$  \\ (\ref{u_classifyodd})};
	
	\node (evenmkkt1) [process, below of=kkt]
	{ Case 1 of  even  $m$
		\\  (\ref{u_classifyeven1})};
	
	\node (evenmkkt2) [process, below of=kkt, xshift = 3cm]
	{Case 2 of  even  $m$
		\\(\ref{u_classifyeven2})};
	
	\node (locodd) [process, below of=oddmkkt]
	{Lemma	\ref{Theorem_structureoflocal}};
	
	\node (loceven1) [process, below of=evenmkkt1]
	{Lemma \ref{Theorem_structureoflocaleven1} };
	
	\node (loceven2) [process, below of=evenmkkt2]
	{Lemma \ref{Theorem_structureoflocaleven2}};
	
	\node (stop) [startstop, below of=loceven1]
	{Proof  of  Theorem \ref{Theorem.local_eigenpair}};
	
	\node (robust) [startstop, below of=stop,xshift = -3cm]
	{Lemma  \ref{RobustLocal}};
	
	\node (Conjecture) [startstop, below of=stop]
	{Proof  of  Theorem \ref{conjecturesimplex}};
	
	\draw [arrow] (stop) --  (Conjecture);
	\draw [arrow] (robust) -- (Conjecture);
	
	\draw [arrow]   (ori) -- node [right] {		 Lemma  \ref{lemma.coincidence.KKT}/\ref{lemma.wmw.k} }  (new) ;
	\draw [arrow] (new) -- (ori);
	\draw [arrow] (new) -- (kkt);
	\draw [arrow] (new)  -- node [left=0.6cm] {Lemma  \ref{lemma.coincidence.KKT} }  (kkt43) ;
	\draw [arrow] (kkt) -- (oddmkkt);
	\draw [arrow] (kkt) -- (evenmkkt1);
	\draw [arrow] (kkt) -- (evenmkkt2);
	
	\draw [arrow]   (oddmkkt) --(locodd) ;
	\draw [arrow] (evenmkkt1) -- (loceven1);
	\draw [arrow]  (evenmkkt2) -- (loceven2);
	
	\draw [arrow]  (locodd) --(stop) ;
	\draw [arrow] (loceven1) -- (stop);
	\draw [arrow]  (loceven2) -- (stop);
	
	\draw [arrow] (kkt) -| (frame);  
	\draw [arrow]   (frame) |- (stop);
	\draw [arrow] (kkt43) |- (stop);
	\end{tikzpicture}
	\caption{
		An intuitive  
		flowchart for the proof of  
		Theorem \ref{Theorem.local_eigenpair}.
		Starting from  the  original optimization model  (\ref{optmodelori})  for $ n\ge3, m\ge3$,
	a  new  model   (\ref{optmodel})  with a  benign structure  is reformulated
	in    Section  \ref{reformulated}.
	Their equivalence can  be guaranteed by Lemma  \ref{lemma.coincidence.KKT}  and  \ref{lemma.wmw.k};
	the special combination $(m,n) = (4,3)$  is solely analyzed 
	and its conclusion is  presented in Lemma  \ref{Theorem_structureofall.43}; 
	then,  for the other combinations  with  $n \ge 3, m \ge3, (m,n) \neq (4,3)$,
	in    Section  \ref{station},	
	Lemma \ref{Theorem_structureofall}  concludes the structure of all KKT points of (\ref{optmodel}),
	including three cases listed in (\ref{u_classifyodd}), (\ref{u_classifyeven1}) and (\ref{u_classifyeven2}).
	Lemma	\ref{vectorregularsimplexframe}
	specifies  the corresponding  form of 
	the vectors in the  regular simplex  frame, 
	and is  naturally  followed  by  	Lemma \ref{Theorem_structureofall}.
	Lemma
	\ref{Theorem_structureoflocal},  \ref{Theorem_structureoflocaleven1}
	and  \ref{Theorem_structureoflocaleven2}      	 	in  Section   \ref{locallymaximized}
	correspondingly  address the local optimality of the   solutions in
	the  three cases in Lemma \ref{Theorem_structureofall}, 
	thereby  enumerating all KKT points.
	Combining those developed lemmas
	with a further discussion for the special combination of $(m,n)=(4,3)$
	completes the proof of the main Theorem \ref{Theorem.local_eigenpair}
	in  Section  \ref{proofmain}.
	Combining  Theorem
	\ref{Theorem.local_eigenpair} and  Lemma \ref{RobustLocal}
	will
	then  resolve   Theorem  \ref{conjecturesimplex}
	in  Section \ref{sec.Conjecture}.
}
	\label{flow.proof_theorem}
\end{figure}

\subsection{Model reformulation   with   a  benign  structure}
\label{reformulated}

In  this  part, we   first   reformulate the   optimization  model    (\ref{opti_ori})  for   the   regular  simplex  tensor  
into  an equivalent  one.
The  details are as  follows.

Let $ n\ge3, m\ge3$.
Model   (\ref{opti_ori})  for   the  regular simplex tensor,   where  the  tensor  is  given  by  (\ref{simplextensor}),
has   the following form
\begin{align}\label{optmodelori}
\begin{cases}
\max\limits_{\mathbf v} \quad \mathcal S_{\mathbf W} \mathbf v^{m}
=
(\sum\limits_{i=1}^{n} \mathbf{w}_{i}^{\circ m}) \mathbf v^{m} \\
\text{s.t.}  \quad \mathbf v^{\mathrm {T}}\mathbf v=1
\end{cases} .
\end{align}
Directly  focusing  on
(\ref{optmodelori})
may be  a  complicated   task.
An equivalent   optimization  model  is  reformulated  to   deal  with  the  issue.
First,
the  objective function  in model   (\ref{optmodelori})   can be  rewritten as:
\begin{equation}
\mathcal{S}_{\mathbf W}   \mathbf v^{m}
=
(\sum_{i=1}^{n} \mathbf{w}_{i}^{\circ m} )\mathbf v^{m}
=
\sum_{i=1}^{n} ( \mathbf v^{\mathrm T} \mathbf{w}_{i})^{ m} .
\end{equation}

Construct a vector   $  \mathbf   u =  [u_1,u_2,\dots, u_{n}]^{\mathrm T}  \in  \mathbb {R}^{n \times 1} $  as follows:
\begin{align}\label{udenote}
u_i :=
\sqrt {
	\frac  {n-1}{ n}
}
\mathbf  v^{\mathrm T}  \mathbf w_i  ,
\quad
\mathbf   u :=
\sqrt {
	\frac  {n-1}{ n}
}
{\mathbf W}^{\mathrm T}  {\mathbf v}
.
\end{align}
It holds that
\begin{equation}\label{eq.obj.pro}
\mathcal{S}_{\mathbf W}  \mathbf v^{m}
=
(
\sqrt {
	\frac  { n}{n-1}
} )^{m}
\sum\limits_{i=1}^{n}  u_{i}^{m}
\varpropto
\sum\limits_{i=1}^{n}  u_{i}^{m}.
\end{equation}

For   the   regular  simplex frame,  the  following property  holds:
\begin{wlproperty}\label{regularproperty}
	For  the  regular  simplex   frame,  
	the null space of $\mathbf{W}^{\mathrm{T}}\mathbf{W} \in  \mathbb{R}^{n \times  n} $ and  that of
	$\mathbf{W} \in  \mathbb{R}^{(n-1) \times  n} $ 
	are both  spanned by
	$ \mathbf{1}_{n} $,
	i.e.,
	$
	\mathbf{W}^{\mathrm T} \mathbf{W}  \mathbf{1}_{n}
	=
	\mathbf{W}  \mathbf{1}_{n}
	=
	\mathbf{0}_{n-1}
	$.
\end{wlproperty}

Therefore, it can be derived
that
\begin{align}\label{utu}
\mathbf   u ^{\mathrm T} \mathbf   u
=
\frac  {n-1}{  n  }
{\mathbf v} ^{\mathrm T} {\mathbf W} {\mathbf W}^{\mathrm T}  {\mathbf v}
=
{\mathbf v} ^{\mathrm T}  {\mathbf v}
=1 ,
\quad
\mathbf   u ^{\mathrm T}
\mathbf 1_{n}
=
\sqrt {
	\frac  {n-1}{ n}
}
{\mathbf v} ^{\mathrm T} {\mathbf W}   {\mathbf 1_{n}}
=
0  ,
\end{align}
where we utilize
$     {\mathbf W} {\mathbf W}^{\mathrm T} =\frac  {  n  }{n-1}  \mathbf I_{n-1}$
and
${\mathbf W}   {\mathbf 1_{n}}
=
\mathbf 0_{n-1} $.
Then,  model     (\ref{optmodelori})
can  be  equivalently    transformed into
\begin{align}\label{optmodel}
\begin{cases}
\max\limits_{\mathbf u}  \quad  \sum\limits_{i=1}^{n}  u_{i}^{m}
\\
\text { s.t. }   \quad
\mathbf   u^{\mathrm T}   \mathbf   u =1 ,
\quad
\mathbf   u^{\mathrm T}   \mathbf   1_{n} =0
\end{cases} .
\end{align}

For   simplicity,  in the  subsequent   analysis,  we denote
$ f (\mathbf u) = \sum_{i=1}^{n}  u_{i}^{m} $,
$ g_{1}(\mathbf u) =  \mathbf u^{\mathrm T}\mathbf u -1 = 0 $,
$ g_{2}(\mathbf u) = \mathbf u^{\mathrm T}\mathbf 1_{n} = 0 $.
The  reformulated  model  can  be
understood as    a  transformation   that  transfers    the  original model  in an  $(n-1)$-dimensional  space  into
an  $n$-dimensional one  for  analysis.
The  new  constraint
$  \mathbf   u^{\mathrm T}   \mathbf   1_{n} =0 $
is  naturally  related  to   Property  \ref{regularproperty}.
An intuitive   schematic diagram
for the case of $n=3$  is  plotted  in  Fig~\ref{simplextrans}.

\begin{figure}
	\centering
	\begin{tikzpicture}[scale=1.5]
	\draw[->] (-1.2,0)  -- (1.2,0);
	\draw[->]  (0,-0.8)  -- (0,1.2);
	\fill  (0:0)  circle(0.5pt);
	
	\node (P)  at (0:1.3) {$x$} ;
	\node (P)  at (90: 1.3) {$y$} ;
	
	\fill[color=red]  (0,1)  circle(0.8pt);
	\fill[color=red]  ( 1.7321/2,  -0.5)  circle(0.8pt);
	\fill[color=red]  (-1.7321/2,  -0.5)    circle(0.8pt);

	\draw[color=black,thick] (0,1) -- ( 1.7321/2,  -0.5) ;
	\draw[color=black,thick] (0,1)  -- ( -1.7321/2,  -0.5)  ;
	\draw[color=black,thick] ( 1.7321/2,  -0.5)  --( -1.7321/2,  -0.5)  ;
	
	\end{tikzpicture}
	\begin{tikzpicture}[scale=1.5]
	\draw[->] (1,0)  -- (1.2,0);
	\draw[->]  (0,1)  -- (0,1.2);
	\draw[->]  (-0.7071   ,-0.7071  )  -- (-0.95  ,-0.95 );
	
	\draw[color=black,dashed] (0,0)  -- (1,0);
	\draw[color=black,dashed]  (0,0)  -- (0,1);
	\draw[color=black,dashed]  (0,0)  -- (-0.7071   ,-0.7071  );
	
	\fill  (0:0)  circle(0.5pt);
	
	\node (P)  at (0:1.3) {$x$} ;
	\node (P)  at (90: 1.3) {$y$} ;
	\node (P)  at (220: 1.06) {$z$} ;
	
	\fill[color=red]  (0,1)  circle(0.8pt);
	\fill[color=red]  ( 1,  0)  circle(0.8pt);
	\fill[color=red]  (-0.7071 ,  -0.7071 )    circle(0.8pt);

	\draw[color=black,thick] (0,1) -- ( 1,  0) ;
	\draw[color=black,thick] (0,1)  -- (-0.7071 ,  -0.7071 ) ;
	\draw[color=black,thick] (-0.7071 ,  -0.7071 )    -- ( 1,  0)  ;
	
	\end{tikzpicture}
	\caption{
		An intuitive   schematic diagram
		concerning the reformulation from model (\ref{optmodelori}) to (\ref{optmodel}) for the case of $n=3$: transforming the regular triangle from 2D to 3D space.
	}
	\label{simplextrans}
\end{figure}

Compared to the original  model (\ref{optmodelori}),
(\ref{optmodel})
has  the  following advantage:
the objective function $f (\mathbf u)$  is
separable  with respect to    its   $n$ variables  $u_{i}$,
which
has a   benign
structure  for  analyzing and   deriving 
the  solutions.
Therefore, in the  following   sections  \ref{station} and \ref{locallymaximized},
the structure  of   all  KKT   and  locally optimal  solutions  $\mathbf u$ of
(\ref{optmodel})
is investigated  to   carry out the  subsequent analysis.
Correspondingly,
in the process,
the correspondence between the KKT and locally optimal solutions of (\ref{optmodelori}) and those of (\ref{optmodel})
will be demonstrated.
Details are given in  Lemma  \ref{lemma.coincidence.KKT}  and   \ref{lemma.wmw.k}.

\subsection{Structure  of  all  KKT points of  (\ref{optmodel})}\label{station}

The Lagrangian function  of the  reformulated   model    (\ref{optmodel})    is
defined as:
\begin{equation}\label{Lagrangianfunctionnew}
L :=  L(\mathbf u, \alpha, \beta)=
\frac{1}{m}
\sum\limits_{i=1}^{n}  u_{i}^{m}
+
\frac{\alpha}{2}
(1- \mathbf   u^{\mathrm T}   \mathbf   u  )
+
\beta (0- \mathbf   u^{\mathrm T}   \mathbf   1_{n}),
\end{equation}
where $ \alpha,\beta$ are  the  corresponding    Lagrange   multipliers of  the two  constraints.

To  obtain all KKT  points, one can  check  the first-order  KKT  condition
of model  (\ref{optmodel}).
When
$ \triangledown_{\mathbf u}L = \mathbf 0$,
KKT points of  (\ref{optmodel})
simultaneously
satisfy
the  following  three  equations:
\begin{align}
\label{KKTgradient}
& \mathbf u ^{\circledast^{m-1}}-\alpha \mathbf u-\beta \mathbf  1_{n} = \mathbf 0 ,
\\
\label{KKTcon1}
& \mathbf   u^{\mathrm T}   \mathbf   u =1 ,
\\
\label{KKTcon2}
& \mathbf   u^{\mathrm T}   \mathbf   1_{n} =0.
\end{align}

By multiplying 
both sides of (\ref{KKTgradient}) 
by $\mathbf{u}^{\mathrm{T}}$ (or $\mathbf{1}_n^{\mathrm{T}}$) and using (\ref{KKTcon1}), (\ref{KKTcon2}), it holds that
\begin{align}\label{alphares}
\alpha
:= \alpha (\mathbf u)
=
\sum\limits_{i=1}^{n}  u_{i}^{m}
=
f(\mathbf u),
\quad
\beta
:= \beta (\mathbf u)
=
\frac 1 n  \sum\limits_{i=1}^{n}  u_{i}^{m-1}.
\end{align}
Note that both
$\alpha$ and $\beta$
are  functions of $\mathbf u$.
For simplicity, we omit the variable  $\mathbf u$ in their  notations.

\subsubsection{Equivalence between  the KKT points of     (\ref{optmodelori})  and 
	those of	(\ref{optmodel})}

After presenting the KKT conditions of the reformulated model,
we  clarify the equivalence between  the KKT points of    (\ref{optmodelori})  and  	 those of 	(\ref{optmodel})
via   the following lemma:
\begin{wllemma}\label{lemma.coincidence.KKT}
	Let $\mathcal{S}_{\mathbf{W}}\in T^{m}(\mathbb{R}^{n-1})$ be a regular simplex tensor with $n\geq 3$, $m\geq 3$. 
	There exists 
	a   one-to-one correspondence between the 	 KKT points of     (\ref{optmodelori})  and  
	those of 	(\ref{optmodel}).
\end{wllemma}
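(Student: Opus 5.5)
The plan is to exhibit the linear map $\mathbf v \mapsto \mathbf u = \sqrt{\frac{n-1}{n}}\mathbf W^{\mathrm T}\mathbf v$ from (\ref{udenote}) as a bijection between the two feasible sets. Then I will check that it carries the first-order conditions of (\ref{optmodelori}) exactly onto (\ref{KKTgradient})--(\ref{KKTcon2}), and that the multipliers match via $\alpha=\lambda\,(\frac{n-1}{n})^{m/2}$ (up to the scaling in (\ref{eq.obj.pro})).

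\textbf{Bijection of feasible sets.} Set $\mathbf Q:=\sqrt{\frac{n-1}{n}}\mathbf W^{\mathrm T}\in\mathbb R^{n\times(n-1)}$. From $\mathbf W\mathbf W^{\mathrm T}=\frac{n}{n-1}\mathbf I_{n-1}$ we get $\mathbf Q^{\mathrm T}\mathbf Q=\mathbf I_{n-1}$, so $\mathbf Q$ is an isometry. By Property \ref{regularproperty}, $\mathbf Q^{\mathrm T}\mathbf 1_n=\mathbf 0$, so the range of $\mathbf Q$ is exactly $\mathbf 1_n^{\perp}$, since both have dimension $n-1$. It follows that $\mathbf Q\mathbf Q^{\mathrm T}=\mathbf I_n-\frac1n\mathbf J_n=\mathbf P^{\bot}_{\mathbf 1_n}$. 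Hence $\mathbf v\mapsto\mathbf Q\mathbf v$ maps the unit sphere of $\mathbb R^{n-1}$ bijectively onto $\{\mathbf u:\mathbf u^{\mathrm T}\mathbf u=1,\ \mathbf u^{\mathrm T}\mathbf 1_n=0\}$, with inverse $\mathbf u\mapsto\mathbf Q^{\mathrm T}\mathbf u$. This is the content of (\ref{utu}), supplemented by the surjectivity argument.

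\textbf{Transfer of the stationarity equations.} Write $c=\sqrt{\frac{n}{n-1}}$, so that $\mathbf w_i^{\mathrm T}\mathbf v=c\,u_i$. Then
\[
\mathcal S_{\mathbf W}\mathbf v^{m-1}=\sum_i(\mathbf w_i^{\mathrm T}\mathbf v)^{m-1}\mathbf w_i=c^{m-1}\mathbf W\mathbf u^{\circledast^{m-1}}=c^{m-2}\mathbf Q^{\mathrm T}\mathbf u^{\circledast^{m-1}}.
\]

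Suppose $\mathbf u$ satisfies (\ref{KKTgradient})--(\ref{KKTcon2}). Multiply (\ref{KKTgradient}) by $\mathbf Q^{\mathrm T}$ and use $\mathbf Q^{\mathrm T}\mathbf 1_n=\mathbf 0$ and $\mathbf Q^{\mathrm T}\mathbf u=\mathbf v$. This gives $\mathbf Q^{\mathrm T}\mathbf u^{\circledast^{m-1}}=\alpha\mathbf v$, hence $\mathcal S_{\mathbf W}\mathbf v^{m-1}=c^{m-2}\alpha\,\mathbf v$. So $\mathbf v=\mathbf Q^{\mathrm T}\mathbf u$ is a Z-eigenvector with $\lambda=c^{m-2}\alpha$.

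Conversely, suppose $\mathcal S_{\mathbf W}\mathbf v^{m-1}=\lambda\mathbf v$ with $\|\mathbf v\|=1$, and put $\mathbf u=\mathbf Q\mathbf v$. Then $\mathbf Q^{\mathrm T}\mathbf u^{\circledast^{m-1}}=c^{2-m}\lambda\mathbf Q^{\mathrm T}\mathbf u$. Applying $\mathbf Q$ and using $\mathbf Q\mathbf Q^{\mathrm T}=\mathbf I_n-\frac1n\mathbf J_n$, together with $\mathbf J_n\mathbf u=\mathbf 0$, gives
\[
\mathbf u^{\circledast^{m-1}}-\tfrac1n\big(\mathbf 1_n^{\mathrm T}\mathbf u^{\circledast^{m-1}}\big)\mathbf 1_n=c^{2-m}\lambda\,\mathbf u .
\]
This is exactly (\ref{KKTgradient}) with $\alpha=c^{2-m}\lambda$ and $\beta=\frac1n\sum_iu_i^{m-1}$, consistent with (\ref{alphares}). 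Since $\mathbf Q$ and $\mathbf Q^{\mathrm T}$ are mutually inverse on the relevant sets, the two maps are inverse to each other, which yields the one-to-one correspondence. Because $c^{m-2}>0$, the sign of $\lambda$ is preserved, and the sign-equivalence $(\lambda,\mathbf v)\leftrightarrow((-1)^{m-2}\lambda,-\mathbf v)$ corresponds to $\mathbf u\mapsto-\mathbf u$. So the correspondence also holds up to sign equivalence.

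\textbf{Main obstacle.} The converse direction is the only delicate step. There one must recover the extra multiplier $\beta$ and use that $\mathbf Q\mathbf Q^{\mathrm T}$ is the projector onto $\mathbf 1_n^{\perp}$ rather than the identity. That fact rests on the range identification from Property \ref{regularproperty}, which I will state explicitly. The remaining steps are direct substitutions.
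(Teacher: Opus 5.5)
Your argument is essentially the paper's. You use the same change of variables (\ref{udenote}) and transfer stationarity by applying $\mathbf W^{\mathrm T}$ (your $\mathbf Q$) together with $\frac{n-1}{n}\mathbf W^{\mathrm T}\mathbf W=\mathbf I_n-\frac1n\mathbf J_n$ (your $\mathbf Q\mathbf Q^{\mathrm T}$). The reverse direction applies $\mathbf W$ (your $\mathbf Q^{\mathrm T}$), which annihilates $\beta\mathbf 1_n$. You are in fact more explicit than the paper about surjectivity of the feasible-set map and about the reverse direction, both of which the paper only sketches.

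There is, however, a scaling slip. With your $c=\sqrt{n/(n-1)}$ you have $\mathbf Q=c^{-1}\mathbf W^{\mathrm T}$, hence $\mathbf W=c\,\mathbf Q^{\mathrm T}$, so
\[
\mathcal S_{\mathbf W}\mathbf v^{m-1}=c^{m-1}\mathbf W\mathbf u^{\circledast^{m-1}}=c^{m}\,\mathbf Q^{\mathrm T}\mathbf u^{\circledast^{m-1}},
\]
not $c^{m-2}\,\mathbf Q^{\mathrm T}\mathbf u^{\circledast^{m-1}}$.

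Consequently $\lambda=c^{m}\alpha$, as in your opening sentence and in (\ref{eq.obj.pro}), (\ref{alphares}). In the converse the multiplier is $\alpha=c^{-m}\lambda$.

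As written, $\alpha=c^{2-m}\lambda$ is \emph{not} consistent with (\ref{alphares}). Pairing your displayed converse identity with $\mathbf u$ gives $f(\mathbf u)=c^{2-m}\lambda$. Since $\lambda=\mathcal S_{\mathbf W}\mathbf v^{m}=c^{m}f(\mathbf u)$, this reads $f(\mathbf u)=c^{2}f(\mathbf u)$, which fails whenever $\lambda\neq 0$.

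Replacing $c^{m-2}$ by $c^{m}$ throughout repairs this and changes nothing else. In particular, $c^{m}>0$ still gives your sign-preservation remark.
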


\begin{proof}
	For model (\ref{optmodelori})  and 	(\ref{optmodel}),
	by  (\ref{eq.obj.pro}),  their  objective functions are proportional to each other;
	by  (\ref{udenote})  and (\ref{utu}),  their  constraints can be mutually  transformed.
	For a  better and  stricter   discussion  of their KKT conditions,
	it is proved  in  two folds.
	First, 	when
	$ \mathbf v$ is a feasible KKT solution of  (\ref{optmodelori}),
	it holds that
	\begin{align}
	\mathcal S_{\mathbf W} \mathbf v^{m-1}=\lambda \mathbf v,
	\quad
	\mathbf v^{\mathrm T}	\mathbf v=1.
	\end{align}
	It is easy to verify that 	the corresponding $\mathbf u$  calculated by
	(\ref{udenote})
	will satisfy (\ref{KKTcon1})  and
	(\ref{KKTcon2}), which can be seen from (\ref{utu}).
	To further show (\ref{KKTgradient}) holds,
	$\mathcal S_{\mathbf W} \mathbf v^{m-1}$ can be rewritten as
	\begin{align}\label{swvm1}
	\mathcal{S}_{\mathbf W}  \mathbf v^{m-1}
	& =
	(
	\sum_{i=1}^{n} \mathbf{w}_{i}^{\circ m}
	)  \mathbf v^{m-1}
	=
	\sum_{i=1}^{n} (  \mathbf w_{i}^{\mathrm T} \mathbf  v)^{ m-1}
	\mathbf{w}_{i}
	\nonumber \\
	& =
	(
	\sqrt {
		\frac   { n}{n-1}
	} )^{m-1}
	\sum_{i=1}^{n}  u_i ^{ m-1}
	\mathbf{w}_{i}
	=
	(
	\sqrt {
		\frac   { n}{n-1}
	} )^{m-1}
	\mathbf  W
	\mathbf u ^{\circledast^{m-1}}.
	\end{align}
	Combining $  \mathbf   u :=
	\sqrt {
		\frac  {n-1}{ n}
	}
	{\mathbf W}^{\mathrm T}  {\mathbf v} $  in  	(\ref{udenote})
	with
	$     {\mathbf W} {\mathbf W}^{\mathrm T} =\frac  {  n  }{n-1}  \mathbf I_{n-1}$
	yields
	$  \mathbf   v=
	\sqrt {
		\frac  {n-1}{ n}
	}
	{\mathbf W}  {\mathbf u} $.
	(\ref{eq.obj.pro})  and (\ref{alphares})     imply  that $ \lambda = \mathcal{S}  \mathbf v^{m}
	=
	(
	\sqrt {
		\frac  { n}{n-1}
	} )^{m} \alpha$.
	Then, it holds that
	\begin{align}
	\mathcal S_{\mathbf W} \mathbf v^{m-1}-\lambda \mathbf v
	& =
	(
	\sqrt {
		\frac   { n}{n-1}
	} )^{m-1}
	\mathbf  W
	\mathbf u ^{\circledast^{m-1}}
	-(
	\sqrt {
		\frac  { n}{n-1}
	} )^{m} \alpha
	\sqrt {
		\frac  {n-1}{ n}
	}
	{\mathbf W}  {\mathbf u}
	\\ \nonumber
	& =
	(
	\sqrt {
		\frac   { n}{n-1}
	} )^{m-1}
	(
	\mathbf  W
	\mathbf u ^{\circledast^{m-1}}
	- \alpha
	{\mathbf W}  {\mathbf u}
	),
	\end{align}
	which implies that
	$\mathbf  W
	\mathbf u ^{\circledast^{m-1}}
	- \alpha
	{\mathbf W}  {\mathbf u} =  \mathbf 0$.
	Observe that the desired (\ref{KKTgradient}) does not contain $\mathbf W$.
	To further eliminate this term,
	we	multiply   $\mathbf W^{\mathrm T}$ on both sides of $\mathbf  W
	\mathbf u ^{\circledast^{m-1}}
	- \alpha
	{\mathbf W}  {\mathbf u} =  \mathbf 0$.
	By the equiangular property of the matrix $\mathbf W$,
	it holds that
	\begin{align}
	(\mathbf W^{\mathrm T}\mathbf W)_{ij}=
	\begin{cases}
	1 ,  \quad  \quad    i=j             \\
	-\frac{1}{n-1}  ,  \quad  i  \neq  j \\
	\end{cases}  .
	\end{align}
	Therefore,
	$	\mathbf W^{\mathrm T}\mathbf W$ can be decomposed as
	$ 	\mathbf W^{\mathrm T}\mathbf W = -\frac{1}{n-1}  \mathbf 1_{n}\mathbf 1_{n}^{\mathrm T}   +
	\frac{n}{n-1}    \mathbf I_{n}$.
	Then, we have that
	\begin{align}\label{wtwum1}
	\mathbf 0
	& \nonumber=
	\mathbf W^{\mathrm T} \mathbf  W
	\mathbf u ^{\circledast^{m-1}}
	- \alpha
	\mathbf W^{\mathrm T}  \mathbf W  {\mathbf u}
	\\ \nonumber
	& =
	(-\frac{1}{n-1}  \mathbf 1_{n}\mathbf 1_{n}^{\mathrm T}   +
	\frac{n}{n-1}    \mathbf I_{n})   ( \mathbf u ^{\circledast^{m-1}} - \alpha \mathbf u )
	\\
	& =
	-\frac{1}{n-1}   \mathbf 1_{n}\mathbf 1_{n}^{\mathrm T} \mathbf u ^{\circledast^{m-1}}
	+	\frac{n}{n-1}   \mathbf u ^{\circledast^{m-1}}
	- \frac{n}{n-1}  \alpha \mathbf u .
	\end{align}
	Substituting
	$ \mathbf 1_{n}^{\mathrm T}
	\mathbf u ^{\circledast^{m-1}}
	=
	\sum\limits_{i=1}^{n}  u_{i}^{m-1}
	=
	n \beta
	$
	into (\ref{wtwum1})
	yields   the desired  (\ref{KKTgradient}).
	We can conclude that the corresponding $\mathbf u$ is also a KKT point
	of  	(\ref{optmodel}).

	Conversely,
	if $\mathbf u$ is  a KKT point
	of  	(\ref{optmodel}),
	the above  analysis can be  followed   
	from setting  $\mathbf u ^{\circledast^{m-1}}
	-   \alpha \mathbf u - \beta \mathbf 1_n =\mathbf 0$
	to  deriving   $\mathcal S_{\mathbf W} \mathbf v^{m-1}=\lambda \mathbf v$.
	The constraint for $\mathbf v$ can be easily checked by (\ref{utu}).
	The conclusion is proved as claimed. $\blacksquare$
\end{proof}

\begin{figure}[t]
	\centering
	\subfigure[]
	{
		\includegraphics[width=0.33\textwidth]{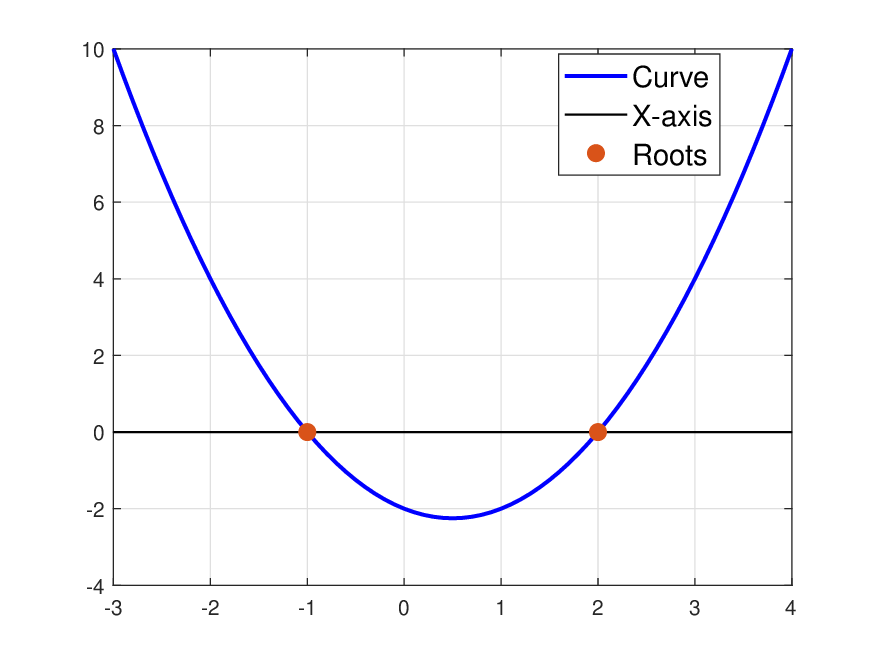}
		\label{odd}
	}
	\subfigure[]
	{
		\includegraphics[width=0.33\textwidth]{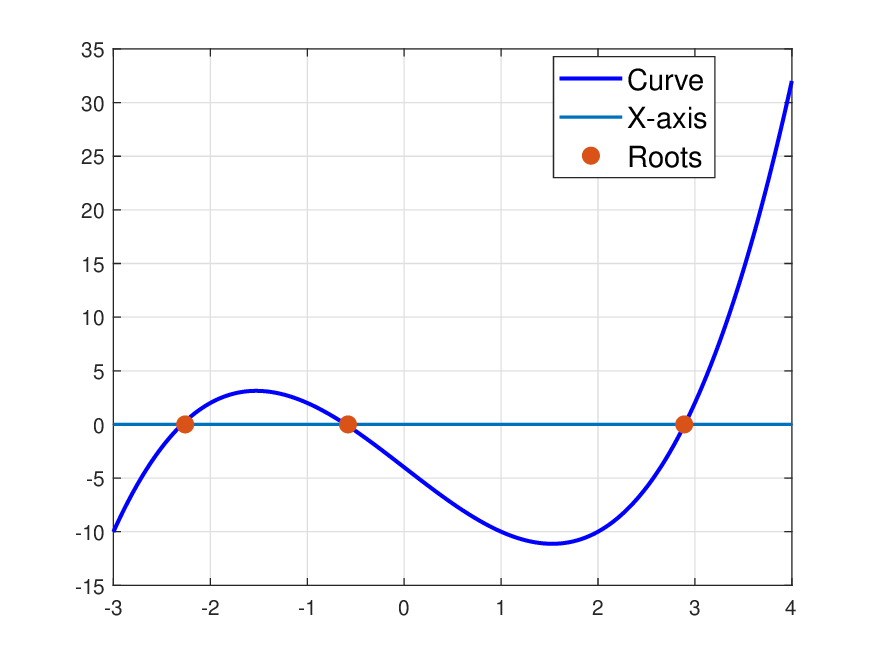}
		\label{even}
	}
	\caption{	The   schematic   figures  
		of   the  function    $  f(x) = x^{m-1} -\alpha x - \beta$ for
		specified  odd and even cases:  a)   the  odd  $m$  case: $m=3, \alpha=1, \beta =2$;   b)   the even   $m$  case:
		$m=4, \alpha=7, \beta =4$.	
	}
		\label{curveplot}
	\end{figure}

\subsubsection{Main results}
 With this equivalence guarantee, we focus on the 
 	 structure of the solutions  of  	(\ref{optmodel}). 
	Assume that  the pair $(\mathbf u, \alpha ,\beta)$   is a feasible solution
	that satisfies (\ref{KKTgradient}),  (\ref{KKTcon1})  and
	(\ref{KKTcon2}).
		 To be consistent with  the selection up to sign equivalence in Lemma  \ref{lemma.sign} and  Remark  \ref{rem.signnew},
	it is necessary to first
	discuss the signs  of $\alpha $ and  $\beta $  for  different orders $m$.
	Analogous to  Lemma  \ref{lemma.sign}, it is stated as the following lemma:

\begin{wllemma}\label{rem.sign.alpha} 	
		Let $\mathcal{S}_{\mathbf{W}}\in T^{m}(\mathbb{R}^{n-1})$ be a regular simplex tensor with $n\geq 3$, $m\geq 3$. 
		Concerning  its equivalent model 	(\ref{optmodel})
		and the feasible  solution pair  $(\mathbf u, \alpha ,\beta)$, 
		then, 	the following statements hold:	
		\\
		(1):   $(\mathbf u, \alpha ,\beta)$   is a solution  of 
		(\ref{optmodel})   if and only if 
		$( -\mathbf u, (-1)^{m} \alpha, (-1)^{m-1}\beta)$  is  a solution  of 
		(\ref{optmodel}).
		\\
		(2):
		When 
		$m$  is odd, the solution pair  $(\mathbf u, \alpha ,\beta)$   can always be chosen  such that  
		\(  \alpha  \geq 0 \), and it always holds 	\(  \beta  > 0 \); 
		when $m$ is even, 
		it  must satisfy \( \alpha  > 0 \), 
		and  the solution pair  $(\mathbf u, \alpha ,\beta)$   can always be chosen  such that  
		\(  \beta  \geq 0 \). 
\end{wllemma}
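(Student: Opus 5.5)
Claim (1) follows by substituting $-\mathbf u$ into the KKT system (\ref{KKTgradient})--(\ref{KKTcon2}). Both constraints (\ref{KKTcon1}) and (\ref{KKTcon2}) are invariant under $\mathbf u\mapsto-\mathbf u$. For the gradient equation, note that $(-\mathbf u)^{\circledast^{m-1}}=(-1)^{m-1}\mathbf u^{\circledast^{m-1}}$. Multiplying (\ref{KKTgradient}) by $(-1)^{m-1}$ gives
\[
(-\mathbf u)^{\circledast^{m-1}}-\big((-1)^{m}\alpha\big)(-\mathbf u)-\big((-1)^{m-1}\beta\big)\mathbf 1_n=\mathbf 0 .
\]
Alternatively, the formulas (\ref{alphares}) give $\alpha(-\mathbf u)=(-1)^m\alpha(\mathbf u)$ and $\beta(-\mathbf u)=(-1)^{m-1}\beta(\mathbf u)$ directly. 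The map is an involution, so the "only if" direction gives the "if" direction. This is the exact analogue of Lemma \ref{lemma.sign}(1), transported through Lemma \ref{lemma.coincidence.KKT}.

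For claim (2) I would use the explicit expressions $\alpha=\sum_i u_i^m$ and $\beta=\frac1n\sum_i u_i^{m-1}$ from (\ref{alphares}).

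\emph{Even $m$.} Here every $u_i^m\ge0$. Since $\mathbf u^{\mathrm T}\mathbf u=1$ forces $\mathbf u\neq\mathbf 0$, we get $\alpha>0$ strictly. The exponent $m-1$ is odd, so $\beta$ flips sign under $\mathbf u\mapsto-\mathbf u$ while $\alpha$ is unchanged. By (1) we may therefore choose the representative with $\beta\ge0$.

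\emph{Odd $m$.} Now $m-1$ is even, so $\beta=\frac1n\sum_i u_i^{m-1}>0$ for every feasible $\mathbf u$, again because $\mathbf u\ne\mathbf 0$. Meanwhile $\alpha$ flips sign under $\mathbf u\mapsto -\mathbf u$. Using (1), we select the representative with $\alpha\ge0$, and $\beta$ stays positive.

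The only point needing care, and the nearest thing to an obstacle, is consistency with the convention of Remark \ref{rem.signnew}. Through the proof of Lemma \ref{lemma.coincidence.KKT} we have $\lambda=(\sqrt{n/(n-1)})^m\alpha$, so the choice $\alpha\ge0$ matches $\lambda\ge0$ in Lemma \ref{lemma.sign}(2). The correspondence $\mathbf v\mapsto\mathbf u$ in (\ref{udenote}) is linear, so $-\mathbf v$ corresponds to $-\mathbf u$, and the two sign normalizations agree. I would state this briefly at the end so that the later case analysis (odd $m$ with $\beta>0$; even $m$ with $\beta\ge0$) is justified.
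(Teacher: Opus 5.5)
Your proposal is correct and follows essentially the same route as the paper. For claim (1) you substitute $-\mathbf u$ into (\ref{KKTgradient})--(\ref{KKTcon2}); for claim (2) you use the explicit formulas (\ref{alphares}) together with the sign flip from (1). The only cosmetic difference is in showing $\alpha>0$ for even $m$: the paper argues through $\lambda=(\sqrt{n/(n-1)})^m\alpha$ and the fact that no unit $\mathbf v$ is orthogonal to every $\mathbf w_i$, while you use $\mathbf u^{\mathrm T}\mathbf u=1$ directly, which is the same fact in $\mathbf u$-coordinates and slightly cleaner.
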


\begin{proof}	 
		For the first claim, it is straightforward to verify that
		if $(\mathbf u, \alpha ,\beta)$  is a feasible solution
		satisfying (\ref{KKTgradient}),  (\ref{KKTcon1})  and
		(\ref{KKTcon2}),
		then for  $ -\mathbf u$,
		we have 
		\begin{align}
		\alpha (-\mathbf u)  =  (-1)^{m} \alpha (\mathbf u),
		\quad 
		\beta (-\mathbf u) =  (-1)^{m-1} \beta (\mathbf u),
		\quad 
		(-\mathbf u )^{\circledast^{m-1}} = (-1)^{m-1} \mathbf u ^{\circledast^{m-1}}.
		\end{align}
		Thus, $(\mathbf u, \alpha ,\beta)$ and  	$( -\mathbf u, (-1)^{m} \alpha, (-1)^{m-1}\beta)$
		can be regarded as equivalent,
		which is analogous to the first claim in  Lemma  \ref{lemma.sign}.

		Concerning the second claim, 
		recall
		the choice  of $\lambda \ge  0$   stated in Lemma  \ref{lemma.sign} and Remark  \ref{rem.signnew},
		by  (\ref{eq.obj.pro})  and (\ref{alphares}), we have  $ \lambda = \mathcal{S}_{\mathbf W}  \mathbf v^{m}
		=
		(
		\sqrt {
			\frac  { n}{n-1}
		} )^{m} \alpha$.
		So the selection   $\alpha \ge 0$ is also natural
		and  consistent  for simplifying the sign equivalence  of solutions. 
		In addition, 
		when $m $  is even, for this special regular simplex tensor, we can obtain a stronger conclusion that  $\alpha \propto  \lambda > 0$.
		Indeed, 
		if 	$ \lambda  = \mathcal S_{ \mathbf W}  \mathbf v^{m}    =
		\sum_{i=1}^{n} ( \mathbf  {w}_{i}^{\mathrm T} \mathbf v)^{ m} = 0$,
		then for even $m$, we must have  $ \mathbf  {w}_{i}^{\mathrm T} \mathbf v =0$ for all $i$.
		However,  due to the highly symmetric structure of $\mathcal{S}_{\mathbf W}$,   there is no  $\mathbf v$
		that is orthogonal to all vectors in $ \mathbf W$.	
		We now discuss the sign of $\beta$ for different orders $m$:
		\begin{itemize}
			\item  When  $m$  is odd,
			we have $u_{i}^{m-1} \ge 0$ for all $i$.
			Since $\beta$ is the sum of  $n$ non-negative  terms,  it  holds that  $\beta \ge 0$.
			However, from the  constraints  in  (\ref{KKTcon1})  and
			(\ref{KKTcon2}), not all $u_{i}$ can be zero simultaneously.
			Therefore, we  conclude that $\beta > 0$.
			\item  When  $m$   is even,
			the sign of each term $u_{i}^{m-1}$
			depends on that of $u_{i}$,  so the sign of $\beta$ is not fixed a priori.
		As stated in	 Lemma  \ref{lemma.sign} and Remark  \ref{rem.signnew}, we conventionally  select  $(\lambda,\mathbf v)$   for analysis.
			Here, $\alpha$ plays a role similar to that of $\lambda$.
			For  $\mathbf u$, we always have  $ \sum_{i} u_{i}=\mathbf   u^{\mathrm T}   \mathbf   1_{n} = 0$.	
			Therefore,  to be consistent with the selection of the pair  $(\lambda,\mathbf v)$, we restrict  the sign of $\beta$ so that the pair  $(\mathbf u, \alpha ,\beta)$  is reserved.
			Then we have   $\beta \ge 0$.
			Here, in contrast to the odd case,  $\beta =0$ can occur.
		\end{itemize}
		The proof is complete. $\blacksquare$
\end{proof}

With the analysis concerning the  signs of $\alpha $ and  $\beta $ in different $m$,
	 we also select only   one representative form  for each  solution up to sign equivalence, to be consistent with 
	tensor eigenpairs convention.
Then,  as can be  observed  from (\ref{KKTgradient}),  each $u_{i}$  is  one of  the roots  of   the   $(m-1)$-order polynomial
$ u_{i}^{m-1} -\alpha u_{i} - \beta =0  $.
Note that  in  this  paper,  we  only  discuss  the  real  solutions
as  clarified  in Remark \ref{rem.signold}, consistent with the previous work \cite{teneigenstructure}.
 In addition, 
	recall that the combination $(m,n) = (4,3)$ serves as a special case of the original model (\ref{optmodelori}), where the objective value 
	$
	\mathcal{S}_{\mathbf{W}} \mathbf{v}^{m}
	$ is a constant, as commented in Remark \ref{RemarkScope}. 
	Naturally, this also holds for the reformulated model (\ref{optmodel}). 
	So in the following, we deal with this exceptional case first. 

\begin{wllemma}\label{Theorem_structureofall.43} 
		Let $(m,n) = (4,3)$. Consider the real KKT points of model (\ref{optmodel})
		satisfying (\ref{KKTgradient}), (\ref{KKTcon1}), and
		(\ref{KKTcon2}), denoted by $\mathbf{u} = [u_1, u_2, \dots, u_n]^{\mathrm{T}} \in \mathbb{R}^{n \times 1}$, with the sign equivalence analyzed in Lemma \ref{rem.sign.alpha}.
		Then any feasible $\mathbf{u}$  with 
		$\alpha > 0 $ and $\beta \ge  0 $
		is a solution of model (\ref{optmodel}), and the objective value $f(\mathbf{u})$   is a constant  $\frac{1}{2}$.
\end{wllemma}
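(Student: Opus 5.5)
The plan is to verify the claim directly with symmetric-function identities for three variables rather than appeal to the original tensor model. Everything reduces to the elementary symmetric polynomials $e_1=u_1+u_2+u_3$, $e_2=u_1u_2+u_1u_3+u_2u_3$, $e_3=u_1u_2u_3$ of a feasible $\mathbf u\in\mathbb R^{3}$. The constraints (\ref{KKTcon1}) and (\ref{KKTcon2}) give $e_1=0$ and $p_2:=\sum_i u_i^2=1$. From $e_1^2=p_2+2e_2$ we then get $e_2=-\tfrac12$ for every feasible point.

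\emph{Step 1: the objective is constant.} Next I compute $f(\mathbf u)=p_4=\sum_i u_i^4$. I would write $p_4=p_2^2-2\sum_{i<j}u_i^2u_j^2$ and use $\sum_{i<j}u_i^2u_j^2=e_2^2-2e_1e_3=\tfrac14$. This gives $p_4=1-\tfrac12=\tfrac12$ on the whole feasible set. As a consistency check, (\ref{eq.obj.pro}) turns this into $\lambda=(\sqrt{3/2})^4\cdot\tfrac12=\tfrac98$. That agrees with the value quoted in Remark \ref{RemarkScope} from \cite{RobustEigen,teneigenstructure}.

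\emph{Step 2: every feasible point is a KKT point.} Each $u_i$ is a root of its own characteristic cubic $x^3-e_1x^2+e_2x-e_3=x^3-\tfrac12x-e_3$. Hence $u_i^{3}=\tfrac12u_i+e_3$ for $i=1,2,3$, i.e. $\mathbf u^{\circledast^{3}}-\tfrac12\mathbf u-e_3\mathbf 1_3=\mathbf 0$, which is exactly (\ref{KKTgradient}) with $\alpha=\tfrac12$ and $\beta=e_3$. These multipliers must agree with the forced values (\ref{alphares}). For $\alpha$ this holds since $\alpha=f(\mathbf u)=\tfrac12$ by Step 1. For $\beta$, Newton's identity with $e_1=0$ gives $p_3=3e_3$, so $\tfrac13\sum_iu_i^3=e_3$. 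Thus every feasible $\mathbf u$, together with $(\alpha,\beta)=(\tfrac12,u_1u_2u_3)$, satisfies (\ref{KKTgradient})--(\ref{KKTcon2}).

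\emph{Step 3: signs.} We have $\alpha=\tfrac12>0$ automatically, in line with Lemma \ref{rem.sign.alpha}(2) for even $m$. If $e_3<0$, I replace $\mathbf u$ by $-\mathbf u$, which is allowed by Lemma \ref{rem.sign.alpha}(1). This keeps $\alpha$ and flips $\beta$, so the representative has $\beta\ge0$, as required in the statement.

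I do not expect a genuine obstacle. The only point needing care is reading ``is a solution'' as ``is a KKT point with the stated multipliers'', and checking that the multipliers produced by the cubic coincide with those forced by (\ref{alphares}). That check is the Newton-identity step above. Alternatively, one could argue that a function constant on the smooth feasible manifold has gradient in the span of the constraint gradients. The explicit algebra is cleaner, however, and it also identifies $\beta$ explicitly.
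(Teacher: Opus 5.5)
Your proof is correct, but for the KKT part it takes a different route from the paper. The paper proves ``every feasible $\mathbf u$ is a KKT point'' by importing Theorem 12 of \cite{teneigenstructure}: every unit $\mathbf v$ is an eigenvector of $\mathcal S_{\mathbf W}$ with eigenvalue $\frac98$. It then transports this through the correspondence of Lemma \ref{lemma.coincidence.KKT}. It obtains $f(\mathbf u)=\frac12$ both from $\lambda=\frac98$ and from the symmetric-function computation you give in Step 1.

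You instead verify (\ref{KKTgradient}) directly in the reformulated model. Since $e_1=0$ and $e_2=-\frac12$ on the feasible set, each $u_i$ is a root of $x^3-\frac12x-e_3$. So the gradient condition holds with $(\alpha,\beta)=(\frac12,u_1u_2u_3)$.

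What your route buys:
\begin{itemize}
\item It is self-contained and elementary (Vieta plus Newton), and needs neither the external theorem nor the correspondence lemma.
\item Read backwards through Lemma \ref{lemma.coincidence.KKT}, it reproves the cited Theorem 12.
\item It gives $\beta$ explicitly, so the normalization $\beta\ge0$ becomes concrete: flip $\mathbf u$ exactly when $u_1u_2u_3<0$.
\end{itemize}
The paper's route is shorter and ties the exceptional case to the existing literature, at the cost of depending on an outside result.

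One minor point: your Newton-identity check that $\beta=\frac13\sum_i u_i^3$ is not needed. The values in (\ref{alphares}) follow from (\ref{KKTgradient})--(\ref{KKTcon2}) by multiplying by $\mathbf u^{\mathrm T}$ and $\mathbf 1_n^{\mathrm T}$. So once the cubic identity holds with some multipliers, they automatically coincide with the forced ones.
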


\begin{proof}
	 
		We utilize the conclusion in Theorem 12 of \cite{teneigenstructure} for model (\ref{optmodelori}),
		which states that every feasible $\mathbf{v}$, up to sign equivalence, 
		is an eigenvector of $\mathcal{S}_{\mathbf{W}}$ with eigenvalue $\frac{9}{8}$, i.e., 
		$\mathcal{S}_{\mathbf{W}} \mathbf{v}^{m-1} = \frac{9}{8} \mathbf{v}$.
		Based on the equivalence result in Lemma \ref{lemma.coincidence.KKT}, 
		when $\mathbf{v}$ is a feasible KKT solution of (\ref{optmodelori}),
		the corresponding $\mathbf{u}$ calculated by
		(\ref{udenote}) is also a KKT point
		of (\ref{optmodel}). 
		Since this holds for every feasible 
		$\mathbf{v}$, it follows that every feasible 
		$\mathbf{u}$
		 satisfying the constraints  is a KKT point of (\ref{optmodel}).
		 To be consistent with the sign convention established in Lemma  \ref{rem.sign.alpha}
	for even $m$,  we note that for any feasible solution 
$(\mathbf u, \alpha ,\beta)$, its sign-equivalent counterpart 
$(-\mathbf u, \alpha ,-\beta)$ is also a solution. 
Therefore, without loss of generality, we may restrict our attention to the representative with 
$\beta \ge  0 $, while 
$\alpha > 0 $ is automatically guaranteed by Lemma \ref{rem.sign.alpha}. Hence, any feasible $\mathbf u$
 satisfying the sign conditions is indeed a valid KKT point of model (\ref{optmodel}). 
		We then derive the objective value $f(\mathbf{u})$. 
		By (\ref{eq.obj.pro}) and (\ref{alphares}), we have $\mathcal{S}_{\mathbf{W}} \mathbf{v}^{m}
		=
		\left(
		\sqrt{
			\frac{n}{n-1}
		} \right)^{m} \alpha$.
		This means that the objective value of the reformulated model (\ref{optmodel}), i.e., $f(\mathbf{u}) = \alpha$, is also a constant for $(m,n) = (4,3)$.
		Since $\mathcal{S}_{\mathbf{W}} \mathbf{v}^{m} = \frac{9}{8}$ for every feasible $\mathbf{v}$ based on  Theorem 12 of \cite{teneigenstructure},
		we can similarly derive that $\alpha = \frac{1}{2}$ for every feasible $\mathbf{u}$. 
		Such a conclusion can also be derived by directly using the constraints $\mathbf   u^{\mathrm T}   \mathbf   u =1 ,
	$ and 
		$ \mathbf   u^{\mathrm T}   \mathbf   1_{n} =0$. 
		When $ n=3$, for  $ u_1+u_2+u_3=0$ and $ u_1^2+u_2^2+u_3^2=1$,
		it holds that $ (u_1+u_2+u_3)^2= u_1^2+u_2^2+u_3^2 + 2(u_1u_2+u_1u_3+u_2u_3) = 0$.
		Thus, we have that $u_1u_2+u_1u_3+u_2u_3 = -\frac 1 2$.
	Consequently, 
	it yields that 	$\frac 1 4 =  (u_1u_2+u_1u_3+u_2u_3)^2 =u_1^2u_2^2+u_1^2u_3^2+u_2^2u_3^2 + 2 
		u_1u_2u_3(u_1 + u_2+u_3) =u_1^2u_2^2+u_1^2u_3^2+u_2^2u_3^2 $.
	Then, by calculating $ (u_1^2+u_2^2+u_3^2)^2= u_1^4+u_2^4+u_3^4 + 2(u_1^2u_2^2+u_1^2u_3^2+u_2^2u_3^2) = 1$,
	we can derive that  $f(\mathbf u) = u_1^4+u_2^4+u_3^4  = \frac 1 2 $ for $(m,n) = (4,3)$.
		$\blacksquare$
\end{proof}

 This lemma implies that  up to sign equivalence,  every feasible $\mathbf{u}$ for $(m,n) = (4,3)$ 
	is both locally maximized and minimized. There is no need for subsequent local optimality
	identification. 
	Then, we are more interested in the structure of all KKT points where $n \ge 3$, $m \ge 3$, and $(m,n) \neq (4,3)$, which 
	is the main task we aim to resolve.
	
Consider   the  function   $  f(x) = x^{m-1} -\alpha x - \beta$,
its  first-order and second-order 
 derivatives are given by
\begin{align*}
	f^{\prime}(x) = (m-1)x^{m-2} -\alpha,
	\quad
	f^{\prime\prime}(x) = (m-2)(m-1)x^{m-3} .
\end{align*}
Similarly,
the odd and even $m$ cases are separately discussed as follows:
\begin{itemize}
	\item  When  $m$  is odd,
	      since $\alpha \ge  0$,
	      $f^{\prime}(x) =0$
	       yields the only   real root which is given by
	      \begin{align}\label{fx0roots}
		      x_0  =
		      \sqrt[\uproot{3} {m-2}]
		      {
			      \frac{\alpha}{m-1}
		      }    \ge   0.
	      \end{align}
	      The fact that  $f^{\prime\prime}(x) \ge 0 $ for any $x$
	      implies
	      that  $f^{\prime}(x)$ is an increasing function.
	      Then,
	      it holds that
	      $f^{\prime}(x) < 0$ for $x \in   (-\infty,x_0)$,
	      and
	      $f^{\prime}(x) > 0$ for $x \in  [x_0, \infty)$.
	      Therefore,
	      $f(x) $
	      decreases in the range  $x \in  (-\infty,x_0)$,
	      and increases in the range   $x \in  [x_0, \infty)$.
	           Since  $f(0) = -\beta < 0$, it holds that $f(x_0) \le   f(0) < 0$. 
	      Naturally,    there are   always  two  real  roots  for  $ x^{m-1} -\alpha x - \beta =0  $.  
	      	It holds  that
	       one of the roots is positive and the other is negative;
	\item When  $m$  is even,       since $\alpha >   0$,
	      there are two real roots for 	$f^{\prime}(x) =0$,
	      which are given by
	      \begin{align}\label{fx0rootseven}
		      x_1  = -x_0=
		      - \sqrt[\uproot{3} {m-2}]
		      {
			      \frac{\alpha}{m-1}
		      }   <0 ,
		      \quad
		      x_0  =
		      \sqrt[\uproot{3} {m-2}]
		      {
			      \frac{\alpha}{m-1}
		      }    > 0.
	      \end{align}
	      It holds that
	      $f^{\prime}(x) > 0$ for $ x \in   (- \infty,x_1]$,
	      $f^{\prime}(x) < 0$ for $ x \in  (x_1, x_0)$,
	      and
	      $f^{\prime}(x) > 0$ for $x \in     [x_0, \infty)$.
				Therefore,
			     $f(x) $
			 increases in the range  $x \in  (-\infty,x_1]$,
	      decreases in  the range 	$ x \in  (x_1,x_0)$,
	      and increases in   the range  $x \in  [x_0, \infty)$.
 When $\beta = 0$, the equation  
$
x^{m-1} - \alpha x - \beta = 0
$
has three roots, given by  
\begin{align}\label{equ.threeroot}
0, \quad
\sqrt[\uproot{3} {m-2}]{\alpha}, \quad
-\sqrt[\uproot{3} {m-2}]{\alpha}.
\end{align}
When $\beta > 0$, we have $f(0) = -\beta < 0$, and $f(x_0) < f(0) < 0$.  
We need to examine the sign of  
$
f(x_1) = 
\left( -\sqrt[\uproot{3} {m-2}]{\frac{\alpha}{m-1}} \right)^{m-1}
+ \alpha \sqrt[\uproot{3} {m-2}]{\frac{\alpha}{m-1}} - \beta.
$
If $f(x_1) < 0$, then $f(x)=0$ has only one root.  
Considering this for every $u_i$, this would contradict the constraints on $\mathbf u$.  
If $f(x_1) = 0$, it yields one negative root and one positive root for $f(x)=0$.  
If $f(x_1) > 0$, it yields two negative roots and one positive root, as can be observed from Fig.~\ref{even}.  
Therefore, to summarize these cases and to be consistent with the constraints on $\mathbf u$, we require that $f(x_1) \ge 0$, which leads to  
\begin{align}\label{equ.alpha.beta.sign}
0 \le \beta \le 
\frac{(m-2)\alpha}{m-1}
\left( \frac{\alpha}{m-1} \right)^{\frac{1}{m-2}}.
\end{align}
\end{itemize}

 For an intuitive illustration,
the  function  curves
 of   the  form
$  f(x) = x^{m-1} -\alpha x - \beta$
for  two specified odd  and  even    cases
are   plotted in  Fig 	\ref{curveplot},
whose results are  consistent with our above analysis.
Therefore,  in the following,  
we  will  separately discuss  the odd and  even   $m$ cases.
Note that  such  an  observation  and  classification  rule is  consistent with the  discussion  presented in
\cite{teneigenstructure}.
The  differences   lies in the fact    that
\cite{teneigenstructure}  focuses   on the original model (\ref{optmodelori}), while
this paper analyzes the  reformulated one (\ref{optmodel}).

	Then,  regarding  the structure of all KKT points where $n \ge 3$, $m \ge 3$, and $(m,n) \neq (4,3)$,  
 the following lemma is presented:

\begin{wllemma}\label{Theorem_structureofall}
	 
Let $n \ge 3$, $m \ge 3$, and $(m,n) \neq (4,3)$. Consider the real KKT points of model (\ref{optmodel})
satisfying (\ref{KKTgradient}),  (\ref{KKTcon1})  and
(\ref{KKTcon2}), denoted by $\mathbf{u} = [u_1, u_2, \dots, u_n]^{\mathrm{T}} \in \mathbb{R}^{n \times 1}$, with the sign equivalence analyzed in Lemma \ref{rem.sign.alpha} and permutation equivalence up to a column permutation matrix $\mathbf{P} \in \mathbb{R}^{n \times n}$.

	(1):
	When  $m$  is  odd
	 	with 
 $\alpha \ge 0 $ and $\beta >0 $  in Lemma  \ref{rem.sign.alpha},  			it holds that 
	\begin{equation}\label{u_classifyodd}
		\mathbf u
		=
			[
				a \mathbf 1_{k}^{\mathrm T},
				b \mathbf 1_{n-k}^{\mathrm T}
			]^{\mathrm T},
	\end{equation}
where $k$ is an integer with $1 \le k \le \lfloor n/2 \rfloor$,
and  $\lfloor n/2  \rfloor$  denotes  the
integer
that is less than or equal to $ n/2$,
$ a=   \sqrt{	\frac{n-k} {k n}		} >0 $
and
$  b=-  \sqrt{	\frac{k} {(n-k) n}	}   <0 $ vary with $k$.

	(2):
	When  $m$  is  even	
		 with 
	$\alpha > 0 $ and $\beta \ge  0 $  in Lemma  \ref{rem.sign.alpha} and (\ref{equ.alpha.beta.sign}) is satisfied, 	it holds that  either
	\begin{equation}\label{u_classifyeven1}
		\mathbf u
		=
				[
					a \mathbf 1_{k}^{\mathrm T},
					b \mathbf 1_{n-k}^{\mathrm T}
			]^{\mathrm T},   
	\end{equation}
	 	where the parameter setting is the same as  the above  (\ref{u_classifyodd}), 
	or
	\begin{equation}\label{u_classifyeven2}
		\mathbf u
		=
			[
				c \mathbf 1_{p}^{\mathrm T},
				d \mathbf 1_{q}^{\mathrm T},
				e \mathbf 1_{s}^{\mathrm T}
			]^{\mathrm T} ,
	\end{equation}
	    where $p, q, s \in \mathbb{Z}^+$ and $1 \le p, q, s \le n-1$,     $p+q+s=n$, 	
	$e < d	  \le   0 <c$, 
	and it must satisfy  $ pc+qd+se= 0$ and $ pc^{2}+qd^{2}+se^{2} = 1$  by  (\ref{KKTcon1})  and
	(\ref{KKTcon2}), and 
	  $ n\beta = pc^{m-1}+qd^{m-1}+se^{m-1} \ge 0$ by Lemma  \ref{rem.sign.alpha},
	  and $ \beta <    \frac{(m-2)\alpha}{m-1}
	  (\frac{\alpha}{m-1})^{ \frac{1}{m-2}} $ 
	  to ensure three distinct roots according to  (\ref{equ.alpha.beta.sign}),
	  where $ \alpha= pc^{m}+qd^{m}+se^{m}$. 
	  When $ d =0$, it holds that $\beta =0$, $p=s$, and $c=-e = \sqrt {\frac {1}{2p}}$. 
\end{wllemma}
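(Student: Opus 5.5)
The approach is to use the separable structure of (\ref{KKTgradient}). Every coordinate $u_i$ is a real root of the same polynomial $h(x)=x^{m-1}-\alpha x-\beta$, with $\alpha,\beta$ given by (\ref{alphares}). So $\mathbf u$ takes at most as many distinct values as $h$ has real roots. Up to a permutation matrix $\mathbf P$, $\mathbf u$ is therefore a block vector with constant blocks. The block sizes and values are then pinned down by (\ref{KKTcon1}), (\ref{KKTcon2}) and the sign normalization of Lemma \ref{rem.sign.alpha}. The case $(m,n)=(4,3)$ is excluded only because there every feasible point is a KKT point (Lemma \ref{Theorem_structureofall.43}). Apart from that, the argument below simply lists the admissible patterns, and for $(m,n)=(4,3)$ this description would not exhaust the KKT set.

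\emph{Odd $m$.} By the root-count analysis preceding (\ref{equ.alpha.beta.sign}), with $\alpha\ge0$ and $\beta>0$ the polynomial $h$ has exactly one positive root $a$ and one negative root $b$.
\begin{itemize}
\item Since $\mathbf u^{\mathrm T}\mathbf 1_n=0$ and $\mathbf u\neq\mathbf 0$, both values must occur. Hence $\mathbf u=[a\mathbf 1_k^{\mathrm T},b\mathbf 1_{n-k}^{\mathrm T}]^{\mathrm T}$ with $1\le k\le n-1$.
\item Solving $ka+(n-k)b=0$ and $ka^2+(n-k)b^2=1$ gives $a=\sqrt{(n-k)/(kn)}$ and $b=-\sqrt{k/((n-k)n)}$.
\item To get $k\le\lfloor n/2\rfloor$, I would use the normalization $\alpha\ge0$, since for odd $m$ the flip $\mathbf u\to-\mathbf u$ changes the sign of $\alpha$. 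A direct computation gives
\[
\alpha=n^{-m/2}\bigl[k^{1-m/2}(n-k)^{m/2}-(n-k)^{1-m/2}k^{m/2}\bigr].
\]
This is nonnegative iff $((n-k)/k)^{m-1}\ge1$, i.e.\ iff $k\le n-k$. When $k=n/2$ we get $\alpha=0$, and both representatives coincide up to permutation.
\end{itemize}

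\emph{Even $m$.} Here $\alpha>0$, $\beta\ge0$, and (\ref{equ.alpha.beta.sign}) holds, so $h$ has two or three distinct real roots.

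\textbf{Two values used.} The same algebra gives the same $a,b$. The normalization that now selects $k\le n/2$ is $\beta\ge0$: since $m-1$ is odd, $n\beta=ka^{m-1}+(n-k)b^{m-1}$ has exactly the sign structure of the odd-case $\alpha$, and it is nonnegative iff $k\le n-k$. This gives (\ref{u_classifyeven1}).

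\textbf{Three values used.} Then $h$ has three distinct roots, so $\beta$ must be strictly below the bound in (\ref{equ.alpha.beta.sign}); equality would create a double root at $x_1$ and leave only two distinct values. The monotonicity analysis gives the ordering.
\begin{itemize}
\item If $\beta>0$, there are two negative roots $e<x_1<d<0$ and one positive root $c>x_0$.
\item If $\beta=0$, the roots are $-\alpha^{1/(m-2)},0,\alpha^{1/(m-2)}$ by (\ref{equ.threeroot}), so $d=0$ and $c=-e$. Then $pc+se=0$ forces $p=s$, and $2pc^2=1$ gives $c=\sqrt{1/(2p)}$.
\end{itemize}
The remaining relations in (\ref{u_classifyeven2}) are just (\ref{KKTcon1}), (\ref{KKTcon2}), (\ref{alphares}) written blockwise.

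The main obstacle is exhaustiveness and consistency in the even case. The fact that only one value is impossible is immediate from $\mathbf u^{\mathrm T}\mathbf 1_n=0$. The work lies in checking that the sign normalization $\beta\ge0$ is compatible with every admissible configuration. I would handle this by tracking, in each subcase, how the pair $(\mathbf u,\beta)\mapsto(-\mathbf u,-\beta)$ of Lemma \ref{rem.sign.alpha} acts on the block pattern. Under this map a three-value pattern with two negative roots goes to one with two positive roots, and the normalization discards the latter. This justifies the ordering $e<d\le0<c$ rather than its mirror image.
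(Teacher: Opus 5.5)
Your proposal is correct and follows essentially the paper's own argument. Each $u_i$ is a real root of $x^{m-1}-\alpha x-\beta$, the two block constraints fix $a$ and $b$, the normalization $\alpha\ge 0$ (odd $m$) or $\beta\ge 0$ (even $m$) forces $k\le\lfloor n/2\rfloor$, and the three-root case with its $\beta=0$ subcase is handled just as in the paper; your closed form for $\alpha$, used in place of the paper's identity $ab=-1/n$, is only a cosmetic difference.

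One small correction to your aside: the claim that the description would fail to exhaust the KKT set at $(m,n)=(4,3)$ is not accurate. Nothing in your argument uses $(m,n)\neq(4,3)$, and with $\beta\ge 0$ every feasible $\mathbf u$ for $n=3$ is either the $k=1$ pattern or a three-value pattern with $p=q=s=1$ and $e<d\le 0<c$. So that exclusion is only needed in the later local-optimality lemmas.
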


\begin{proof}
	Equivalently,  for (\ref{KKTgradient}), there   are   $n$  equations    of
	  the  form
	\begin{equation}\label{uim1alpha}
		u_{i}^{m-1} -\alpha u_{i} - \beta =0  \quad  (i=1,2, \dots, n).
	\end{equation}

	In   the case of (1),
	when $m$ is  odd,   under the sign convention established in Lemma  \ref{rem.sign.alpha},
	 we have $\alpha \ge 0$ and 
	$\beta > 0$. Consequently, the  equation  (\ref{uim1alpha})
	admits exactly two distinct real roots:
	$
	a > 0 $ and  $ b < 0$.
	It can be  checked that
	all  $n$
	variables
	$u_{i},i=1,2,\dots,n$   cannot all take the same value $a$ or $b$.
	Otherwise, we  will
	derive    $ u_{1} =u_{2} = \cdots = u_{n} = 0 $, which  
	 contradicts  the  constraints  in  (\ref{optmodel}).
	Therefore, 
	 at  most  $n-1$
	variables
	$u_{i}$  
	can  take the same value. 
	For  example,
	assume  that  the  first  $k$ ($ 1 \le  k  \le  n-1 $)
	variables  are    selected to be the positive one,
	and
	the last   $n-k$  variables take     the  negative one.
	The  form of the  solution  is  given by
	\begin{align}\label{ab_struc}
		 & u_{1}=u_{2} \cdots=u_{k}  =a > 0  ,
		 & u_{k+1}=u_{k+2}  \cdots=u_{n} =b < 0.
	\end{align}

	By considering  the  two constraints  in  (\ref{optmodel}),
	it implies that
	\begin{equation}\label{ab_expreess}
		\begin{cases}
			k a^{2}+(n-k) b^{2}=1 \\
			k a+(n-k) b=0
		\end{cases}.
	\end{equation}
	Then, it can be derived that
	\begin{equation}\label{ab_solu}
		\begin{cases}
			a=   \sqrt{	\frac{n-k} {k n}		} \\
			b=-  \sqrt{	\frac{k} {(n-k) n}	}
		\end{cases},
	\end{equation}
	where
	$ a, b  $ vary with $k$,  and  for simplicity, we omit the subscript $k$ in both 
	 $ a$ and $ b$.
Furthermore, 
	recall the equivalence between  $(\mathbf u, \alpha ,\beta)$ and  	$( -\mathbf u, -\alpha, \beta)$
	for odd $m$ stated in Lemma \ref{rem.sign.alpha}, 
	we need to discuss the 
	 range  of $k$ to ensure the setting of $\alpha \ge 0$.
For  odd $m  \ge 3$, it holds that 
$	\alpha
=
\sum\limits_{i=1}^{n}  u_{i}^{m}
=
ka^m+(n-k)b^m
$.
  It can be observed that when $n$ is even, and $k= n/2$, it holds that $a = -b$ and $\alpha =0$. 
  Using the fact that $ab=   - \frac  1n $  from the  above equation, 
  we can derive that 
  $	\alpha
  =
ka^m-  \frac  {n-k} {n^m a^m}
$.
 Since  $  a^m > 0$, it holds that 
$ ka^{2m}n^m -(n-k) \ge 0$
and 
$ \frac { k(n-k)^{m}n^m} {k^m n^m} -(n-k) \ge 0$, 
which implies that 
$ \frac { (n-k)^{m-1}} {k^{m-1}}  \ge 1$
and $ n-k \ge k$. 
For general $n$ and $k$,  when 
 we 
  constrain   $1 \le  k  \le  \lfloor n/2  \rfloor$,
$	\alpha \ge 0$ can be ensured. 
The solutions
where $ \lfloor n/2  \rfloor  \le  k  \le  n-1$
are  equivalent   up to sign equivalence.  
	Then,  the  structure of  all   solutions
	can be  denoted  by
	permuting the  indices
	of  $k$ 
	  identical   components     by  a  permutation matrix $\mathbf P$, which then
	can be  expressed 
	  in the form of   (\ref{u_classifyodd})  up to  the specified   sign and  permutation equivalences.

  In the case of (2),
	when $m$ is  even,
 	we have $\alpha >  0$ and 
	$\beta \ge  0$   based on  Lemma  \ref{rem.sign.alpha}.
 	When (\ref{equ.alpha.beta.sign}) is satisfied,   
	there  are   at most three real  roots for
	(\ref{uim1alpha}). 
	Similarly,  all $n$  variables  cannot be the same one, indicating  that there  are  at  least  two  different  values  
 among  the components of $\mathbf  u$.
	Then, the  analysis can be 
	 carried out similarly to that in (1).
	When there are only two distinct roots,
	the structure  of
	the solution will be  given by  (\ref{ab_solu}).
 	To be consistent with the setting $\beta \ge 0$  presented  in Lemma \ref{rem.sign.alpha},
	we  can perform a similar calculation  of   $\beta$  as 
	 was done   for   $\alpha$ in the first claim, and the same constraint on $k$ can be derived. 
	  It can be observed that when $n$ is even, and $k= n/2$, it holds that $a = -b$ and $\beta =0$. 
	 The solutions   are  listed in
	(\ref{u_classifyeven1}).
	While
	for
	(\ref{u_classifyeven2}),
 	we require a strict inequality $\beta <\frac{(m-2)\alpha}{m-1}
	\left( \frac{\alpha}{m-1} \right)^{\frac{1}{m-2}} $   based on    (\ref{equ.alpha.beta.sign}),  to ensure that there are three distinct roots
	for $	u_{i}^{m-1} -\alpha u_{i} - \beta =0$, 
		 denoted $e < d <c $ for simplicity. 
		When $\beta =0$,  based on (\ref{equ.threeroot}), it holds that $ d=0$.
		Then, to satisfy $\beta =0$, it can be derived that $ p =s$ and $c=-e = \sqrt {\frac {1}{2p}}$ as presented above. 
		When  $\beta > 0$, 
		(\ref{uim1alpha}) has  one positive root $c$ and two negative roots $d$ and $e$. In both cases, it holds that $e < d \le 0 < c$.
	Since we have  three  unknown  variables $c,d,e$
	but only  two  constraints, we cannot
	obtain the  explicit  expression  for $c,d,e$ in general.
 	We can only restrict the signs  of  $c,d,e$ as stated in the lemma. 
	$ pc+qd+se= 0$ and $ pc^{2}+qd^{2}+se^{2} = 1$ are presented as two constraints, 
	 which naturally follow from  (\ref{KKTcon1})  and
	(\ref{KKTcon2}). 
	In addition, we need to calculate the  value of  $pc^{m-1}+qd^{m-1}+se^{m-1}$ to ensure that $\beta \ge 0$.
 	Similarly, $\alpha  =  pc^{m}+qd^{m}+se^{m}$ is also calculated, and  
	$\beta <  \frac{(m-2)\alpha}{m-1}
	(\frac{\alpha}{m-1})^{ \frac{1}{m-2}}$ is required. 
Note that 	we cannot provide the explicit constraints on $p,q,s$ in general  as can be done for $k$. 
Only when $ \beta=0$, we can derive a explicit form for $\mathbf u$, as concluded in the lemma. 
The proof is complete. 	     $\blacksquare$
\end{proof}

\begin{wlremark}
	Note that in
	(\ref{u_classifyeven1})
	and
	(\ref{u_classifyeven2}),
	we use five notations  from
	$a$ to $e$ to distinguish
	the  two  different   structures   of  the solutions for the even $m$ cases.
	We  hope  that  such a  distinction
	will not cause ambiguity
 regarding	 the fact that there are at most three different  real roots for
	$  u_{i}^{m-1} -\alpha u_{i} - \beta =0  $  for  $  i=1,2, \dots, n$
	when $m$ is even and (\ref{equ.alpha.beta.sign}) is satisfied.
	Note that there is no   explicit expression for the notations  $c,d,e$ 
	in (\ref{u_classifyeven2})  in general.
\end{wlremark}

Note that 
 the vectors in the  regular simplex  frame  (which are  the generators of the corresponding regular simplex tensor)  $\mathbf{w}_{1}, \ldots, \mathbf{w}_{n}$
 are  $n$ eigenpairs of   $\mathcal S_{\mathbf{W}}$ \cite{RobustEigen}.
After   investigating the structure of all  KKT points,
by equivalence between 	model (\ref{optmodelori})  and 	(\ref{optmodel}),
the  following lemma
can be  established to
 show  how
 the vectors in the  regular simplex  frame  $\mathbf{w}_{1}, \ldots, \mathbf{w}_{n}$
can be related to the set of
all KKT  solutions of 	(\ref{optmodel}):
\begin{wllemma}\label{vectorregularsimplexframe}
 Consider the   setting as stated in   Lemma  \ref{Theorem_structureofall}.
	For the  vectors   in the regular simplex  frame, i.e.,
	$ \mathbf{w}_{1}, \ldots, \mathbf{w}_{n}$,
	the set of 	 $\mathbf u$ obtained  by (\ref{udenote})
	corresponds  to
 	solutions  where    $k=1$   in both
	(\ref{u_classifyodd})  and (\ref{u_classifyeven1}).
\end{wllemma}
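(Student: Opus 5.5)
The plan is a direct computation with $\mathbf v=\mathbf w_j$ in (\ref{udenote}), using the Gram structure of the frame already recorded in the proof of Lemma \ref{lemma.coincidence.KKT}. Recall $(\mathbf W^{\mathrm T}\mathbf W)_{ij}=1$ for $i=j$ and $-\frac{1}{n-1}$ for $i\neq j$. Hence for $\mathbf v=\mathbf w_j$ (which is unit length, so feasible) we get
\begin{align*}
u_j=\sqrt{\tfrac{n-1}{n}}\cdot 1=\sqrt{\tfrac{n-1}{n}},\qquad u_i=\sqrt{\tfrac{n-1}{n}}\cdot\Bigl(-\tfrac{1}{n-1}\Bigr)=-\sqrt{\tfrac{1}{n(n-1)}}\quad (i\neq j).
\end{align*}
In other words, $\mathbf u$ is the $j$th column of $\sqrt{\frac{n-1}{n}}\mathbf W^{\mathrm T}\mathbf W$.

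Next, we compare these values with (\ref{ab_solu}) at $k=1$. That formula gives $a=\sqrt{\frac{n-1}{n}}$ and $b=-\sqrt{\frac{1}{(n-1)n}}$, which match exactly. So after the permutation $\mathbf P$ moving index $j$ to the front, $\mathbf u=[a,\,b\mathbf 1_{n-1}^{\mathrm T}]^{\mathrm T}$, i.e., the $k=1$ instance of both (\ref{u_classifyodd}) and (\ref{u_classifyeven1}). It remains to check that this representative respects the sign convention of Lemma \ref{rem.sign.alpha}, so that we do not need to pass to $-\mathbf u$.
\begin{itemize}
\item For odd $m$, $k=1\le\lfloor n/2\rfloor$ holds since $n\ge 3$, and the computation in the proof of Lemma \ref{Theorem_structureofall} gives $\alpha\ge 0$.
\item For even $m$, $\alpha>0$ holds automatically. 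We also need $\beta=\frac1n\bigl(a^{m-1}+(n-1)b^{m-1}\bigr)\ge 0$. Using $|a|=(n-1)|b|$, this equals $\frac{|b|^{m-1}}{n}\bigl((n-1)^{m-1}-(n-1)\bigr)\ge 0$.
\item The bound (\ref{equ.alpha.beta.sign}) needs no separate check, since it follows from $\mathbf u$ being a KKT point with two distinct root values.
\end{itemize}

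For the converse direction, any $k=1$ solution is, up to the permutation $\mathbf P$, $\mathbf u=[a,\,b\mathbf 1_{n-1}^{\mathrm T}]^{\mathrm T}$ with the nonrepeated entry $a$ sitting in some position $j$. By the computation above this $\mathbf u$ equals $\sqrt{\frac{n-1}{n}}\mathbf W^{\mathrm T}\mathbf w_j$. The map $\mathbf v\mapsto\mathbf u$ is injective, because $\mathbf v=\sqrt{\frac{n-1}{n}}\mathbf W\mathbf u$ by the proof of Lemma \ref{lemma.coincidence.KKT}. So this $\mathbf u$ corresponds exactly to $\mathbf w_j$. Ranging over the $n$ positions gives a bijection with $\mathbf w_1,\ldots,\mathbf w_n$.

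There is no real obstacle here. The only delicate point is the sign bookkeeping: confirming that $\mathbf w_j$ itself, rather than $-\mathbf w_j$, is the chosen representative under Lemma \ref{rem.sign.alpha} for both parities of $m$. The $\beta\ge0$ inequality above settles this for even $m$, and $k\le\lfloor n/2\rfloor$ settles it for odd $m$.
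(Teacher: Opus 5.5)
Your proposal is correct and follows the paper's own proof: you compute $\mathbf u=\sqrt{\frac{n-1}{n}}\mathbf W^{\mathrm T}\mathbf w_j$ from the Gram matrix of the frame and match its entries with the $k=1$ values $a,b$ in (\ref{ab_solu}). Beyond the paper, you explicitly verify the sign conventions and prove the converse bijection via $\mathbf v=\sqrt{\frac{n-1}{n}}\mathbf W\mathbf u$, where the paper only says ``clearly''; the one fact you use without proof, that this $\mathbf u$ is a KKT point, holds because $\mathbf w_j$ is an eigenvector, or directly because any feasible $\mathbf u$ with two distinct entries satisfies (\ref{KKTgradient}).
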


\begin{proof}
	Concerning  the   vectors   in the regular simplex  frame
	$
		\mathbf W =
		[
		\mathbf{w}_{1}, \ldots, \mathbf{w}_{n}
		] \in
		\mathbb{R}^{(n-1) \times  n}$,
	it holds that
	$ \mathbf{w}_{i}^{\mathrm T} \mathbf{w}_{i} =1,
		\mathbf{w}_{i}^{\mathrm T} \mathbf{w}_{j} = -\theta = -\frac{1}{n-1} $
	for
	$ i \neq j$.
	It can be calculated that  when
	$ \mathbf v = \mathbf w_{j}$,
	by (\ref{udenote}),
	it holds that
	\begin{align}\label{udenotealpha}
		\mathbf   u
		\nonumber =
		\sqrt {
			\frac  {n-1}{ n}
		}
		{\mathbf W}^{\mathrm T}  {\mathbf w_{j}}
			=
			[u_1,u_2,\dots, u_{n}]^{\mathrm T}
		=
		\sqrt {
			\frac  {n-1}{ n}
		}
		[-\theta,  \dots,   -\theta,
			\underbrace{ 1 }_{j}, -\theta,  \dots,  -\theta ]^{\mathrm T}
		,
	\end{align}
	It can be checked that
	$\mathbf u$ with the above form satisfies the two constraints  in (\ref{optmodel}), and 
 the signs of its corresponding $\alpha$ and $\beta$ are   also consistent with  the  setting stated in  Lemma  \ref{rem.sign.alpha}.
	Clearly,
	when $ \mathbf v = \mathbf w_{j}$,
	$\mathbf u$    obtained  by (\ref{udenote})
	corresponds
	to the
	subset where   $k=1$
	among solutions  of
	(\ref{u_classifyodd})  and (\ref{u_classifyeven1}), and
	it holds for both odd  and even $m$  cases.    $\blacksquare$
\end{proof}

\subsection{
	Local optimality of all KKT   points of  (\ref{optmodel})
}
\label{locallymaximized}

In this section,  we are further interested in identifying  the local optimality of each  KKT point  in Lemma  \ref{Theorem_structureofall}.

For local optimality checking,    it is necessary to   introduce the second-order
derivative information
of  the  Lagrangian function.
For  (\ref{Lagrangianfunctionnew}),   the second-order  derivative with respect to    $ \mathbf u $, termed  the Hessian matrix,  is   denoted  by
\begin{equation}\label{Hessianmatrix}
	\mathbf H (\mathbf u) := \triangledown_{\mathbf u\mathbf u}^{2} L
	=(m-1) \operatorname{diag} (\mathbf u ^{\circledast^{m-2}})
	-\alpha \mathbf  I_{n}  .
\end{equation}
Similar to
the  derivation for (\ref{Mhess}),
the
locally    optimal    point of   (\ref{optmodel})
can  be  identified  by
checking the  negative
semi-definiteness of the  following  matrix:
\begin{equation}\label{Mhess2}
	\mathbf {M} :=
	\mathbf {M} (\mathbf u)
	=
	(\mathbf  P_{\mathbf  A } ^{\bot})^{\mathrm T}    \mathbf H (\mathbf u)  \mathbf  P_{\mathbf  A }^{\bot}
	=
	\mathbf  P_{\mathbf  A } ^{\bot}   \mathbf H (\mathbf u)  \mathbf  P_{\mathbf  A }^{\bot}
	,
\end{equation}
where
$ \mathbf A $
is determined by
the
two constraints
$  g_{1}(\mathbf u) $ and  $ g_{2}(\mathbf u) $,
which
 has  the  following form:
\begin{equation}
	\label{Aform}
	\mathbf A :=  [\triangledown  g_{1}(\mathbf u) , \triangledown  g_{2}(\mathbf u) ] =
	[\mathbf u, \mathbf 1_{n}]  \in   \mathbb R^{n  \times 2 } .
\end{equation}
 We need to 
further count the number of zero  eigenvalues  of 	$\mathbf  M$  
 to guarantee locally maximized solutions. 
When there are  two zero eigenvalues,  	 it is guaranteed that the corresponding solution is locally  maximized.
See   the Appendix  for more details.
This criterion will be  checked and discussed in Lemma \ref{Theorem_structureoflocal}
and \ref{Theorem_structureoflocaleven1}.

Due to the form of $\mathbf  A$ as shown in (\ref{Aform}),
$ \mathbf  P_{\mathbf  A }^{\bot} $ can be   further  rewritten as
\begin{align}\label{projecteddenote}
	\mathbf  P_{\mathbf  A } ^{\bot}
	=
	 & \nonumber
	\mathbf  I_{n}  -
	\begin{bmatrix}
		\mathbf u &
		\mathbf 1_{n}
	\end{bmatrix}
	(
	\begin{bmatrix}
		\mathbf u^{\mathrm T} \\
		\mathbf 1_{n}^{\mathrm T}
	\end{bmatrix}
	\begin{bmatrix}
		\mathbf u &
		\mathbf 1_{n}
	\end{bmatrix}
	)
	^{-1}
	\begin{bmatrix}
		\mathbf u^{\mathrm T} \\
		\mathbf 1_{n}^{\mathrm T}
	\end{bmatrix}
	\\ \nonumber
	 & =
	\mathbf  I_{n}  -
	\begin{bmatrix}
		\mathbf u &
		\mathbf 1_{n}
	\end{bmatrix}
	(
	\begin{bmatrix}
		\mathbf u^{\mathrm T}\mathbf u       & \mathbf u^{\mathrm T}\mathbf 1_{n}     \\
		\mathbf 1_{n} ^{\mathrm T} \mathbf u & \mathbf 1_{n}^{\mathrm T}\mathbf 1_{n} \\
	\end{bmatrix}
	)
	^{-1}
	\begin{bmatrix}
		\mathbf u^{\mathrm T} \\
		\mathbf 1_{n}^{\mathrm T}
	\end{bmatrix}
	\\
	 & =
	\mathbf  I_{n}  -
	\begin{bmatrix}
		\mathbf u &
		\mathbf 1_{n}
	\end{bmatrix}
	(
	\begin{bmatrix}
		1 & 0 \\
		0 & n \\
	\end{bmatrix}
	)
	^{-1}
	\begin{bmatrix}
		\mathbf u^{\mathrm T} \\
		\mathbf 1_{n}^{\mathrm T}
	\end{bmatrix}
	=
	\mathbf  I_{n}  -
	(\mathbf u\mathbf u^{\mathrm T}
	+ \frac  1n \mathbf 1_{n}\mathbf 1_{n}^{\mathrm T} ).
\end{align}
where we use the equations
$ \mathbf u^{\mathrm T}\mathbf 1_{n} = \mathbf 1_{n} ^{\mathrm T} \mathbf u =  0 $,
$\mathbf 1_{n} ^{\mathrm T} \mathbf 1_{n} =  n $   and
$ \mathbf u^{\mathrm T}\mathbf u=1$,
all of which are due to the KKT conditions
presented in (\ref{KKTgradient}),  (\ref{KKTcon1})  and
(\ref{KKTcon2}).

We have discussed the coincidence  between  the KKT points of    (\ref{optmodelori})  and  those of   (\ref{optmodel}) in Lemma \ref{lemma.coincidence.KKT}.
Here,
when    taking second-order sufficient condition into account,
we need to further check 
 whether the correspondence bwtween    the local optimality of  the KKT points of  (\ref{optmodelori})  and 
that of   (\ref{optmodel})   can be guaranteed.
In other words,  we ask whether,  if
$ \mathbf v$ is a locally maximized  solution of  (\ref{optmodelori}),
	the corresponding $\mathbf u$   obtained  by
(\ref{udenote})
 is also  a locally maximized one  of (\ref{optmodel}), and 
 vice versa.
Similar claims and questions   can be made for the locally minimized and saddle cases.
We first  provide a positive answer   to this question.

Regarding  the 	matrix  $\mathbf K$  defined  in  (\ref{Mhess}),
when the  tensor  is   the  regular simplex  tensor   given  by  (\ref{simplextensor}),
for distinction, based on (\ref{equ.K.sim}),
it is denoted  by a   new matrix with the following form:
\begin{align}\label{Mhessw}
	\mathbf  K_{\mathbf W}
	= \mathbf  P_{\mathbf  v}^{\bot}
	[
	(m-1)\mathcal S_{\mathbf W} \mathbf v^{m-2} - \lambda \mathbf I_{n-1})
	]
	\mathbf  P_{\mathbf  v}^{\bot}
	=
	(m-1)\mathcal S_{\mathbf W}  \mathbf v^{m-2}
	-
	\lambda \mathbf I_{n-1}
	-
	(m-2)\lambda \mathbf v \mathbf v^{\mathrm T} .
\end{align}

Then, we have the following lemma:
\begin{wllemma}\label{lemma.wmw.k}
 	Let $\mathcal{S}_{\mathbf{W}}\in T^{m}(\mathbb{R}^{n-1})$ be a regular simplex tensor with $n\geq 3$, $m\geq 3$. 	
For models  (\ref{optmodelori})
	and
	(\ref{optmodel}) related to the regular simplex tensor,
	concerning the matrix $\mathbf M$  defined in (\ref{Mhess2})
	and    the 	matrix  $\mathbf K_{\mathbf W}$  in  (\ref{Mhessw}),
	recall the regular simplex frame matrix $\mathbf W$,
	it holds that
	(1):
	$	\mathbf W  \mathbf  M    	\mathbf W^{\mathrm T}  = c_{n,m}	\mathbf K_{\mathbf W}$, where $c_{n,m} > 0$ is a positive 
	 value 
	which varies with $n,m$;
	(2):
	$  	  \mathbf  M     \preceq 0 \Leftrightarrow 	\mathbf W  \mathbf  M    	\mathbf W^{\mathrm T}  \preceq 0 \Leftrightarrow   \mathbf K_{\mathbf W} \preceq 0$.
\end{wllemma}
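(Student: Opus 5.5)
The plan is to compute $\mathbf W\mathbf M\mathbf W^{\mathrm T}$ directly, using only three identities already in Section \ref{reformulated}:
\[
\mathbf W\mathbf 1_n=\mathbf 0_{n-1},\qquad \mathbf W\mathbf W^{\mathrm T}=\tfrac{n}{n-1}\mathbf I_{n-1},\qquad \mathbf v=\sqrt{\tfrac{n-1}{n}}\,\mathbf W\mathbf u .
\]
The last one is equivalent to $\mathbf u=\sqrt{\frac{n-1}{n}}\mathbf W^{\mathrm T}\mathbf v$. I take $\mathbf v$ to be the KKT point of (\ref{optmodelori}) matched to $\mathbf u$ by Lemma \ref{lemma.coincidence.KKT}. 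By (\ref{eq.obj.pro}) and (\ref{alphares}) its multiplier is $\lambda=(\frac{n}{n-1})^{m/2}\alpha$.

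\textbf{Step 1: push $\mathbf W$ through the projector.} By (\ref{projecteddenote}), $\mathbf W\mathbf P_{\mathbf A}^{\bot}=\mathbf W-\mathbf W\mathbf u\mathbf u^{\mathrm T}$, because the $\frac1n\mathbf 1_n\mathbf 1_n^{\mathrm T}$ term is killed by $\mathbf W\mathbf 1_n=\mathbf 0$. Next, $\mathbf W\mathbf u=\sqrt{\frac{n}{n-1}}\mathbf v$ and $\mathbf u^{\mathrm T}=\sqrt{\frac{n-1}{n}}\mathbf v^{\mathrm T}\mathbf W$. Substituting gives the intertwining relation
\[
\mathbf W\mathbf P_{\mathbf A}^{\bot}=(\mathbf I_{n-1}-\mathbf v\mathbf v^{\mathrm T})\mathbf W=\mathbf P_{\mathbf v}^{\bot}\mathbf W .
\]
Transposing, $\mathbf P_{\mathbf A}^{\bot}\mathbf W^{\mathrm T}=\mathbf W^{\mathrm T}\mathbf P_{\mathbf v}^{\bot}$. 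Hence $\mathbf W\mathbf M\mathbf W^{\mathrm T}=\mathbf P_{\mathbf v}^{\bot}\,\mathbf W\mathbf H(\mathbf u)\mathbf W^{\mathrm T}\,\mathbf P_{\mathbf v}^{\bot}$.

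\textbf{Step 2: transport the Hessian.} Write $\mathbf W\operatorname{diag}(\mathbf u^{\circledast^{m-2}})\mathbf W^{\mathrm T}=\sum_i u_i^{m-2}\mathbf w_i\mathbf w_i^{\mathrm T}$. Using $u_i=\sqrt{\frac{n-1}{n}}\mathbf w_i^{\mathrm T}\mathbf v$, this equals $(\frac{n-1}{n})^{(m-2)/2}\mathcal S_{\mathbf W}\mathbf v^{m-2}$. Also $\alpha\mathbf W\mathbf W^{\mathrm T}=\alpha\frac{n}{n-1}\mathbf I_{n-1}$. Substituting $\alpha=(\frac{n-1}{n})^{m/2}\lambda$ turns this into $(\frac{n-1}{n})^{(m-2)/2}\lambda\,\mathbf I_{n-1}$. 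So
\[
\mathbf W\mathbf H(\mathbf u)\mathbf W^{\mathrm T}=c_{n,m}\big[(m-1)\mathcal S_{\mathbf W}\mathbf v^{m-2}-\lambda\mathbf I_{n-1}\big],\qquad c_{n,m}=\Big(\tfrac{n-1}{n}\Big)^{\frac{m-2}{2}}>0 .
\]
Sandwiching this between the two $\mathbf P_{\mathbf v}^{\bot}$ factors gives exactly $c_{n,m}\mathbf K_{\mathbf W}$ by (\ref{Mhessw}), which proves (1). The only real care needed is bookkeeping of the powers of $\sqrt{n/(n-1)}$.

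\textbf{Step 3: claim (2).} The second equivalence is immediate from (1), since $c_{n,m}>0$. For the first equivalence, one direction is trivial: $\mathbf M\preceq0$ implies $\mathbf W\mathbf M\mathbf W^{\mathrm T}\preceq0$ by congruence. For the converse, take any $\mathbf x\in\mathbb R^n$ and set $\mathbf z=\mathbf P_{\mathbf A}^{\bot}\mathbf x$. Since $\mathbf P_{\mathbf A}^{\bot}$ is idempotent, $\mathbf x^{\mathrm T}\mathbf M\mathbf x=\mathbf z^{\mathrm T}\mathbf M\mathbf z$. Moreover $\mathbf z\perp\mathbf 1_n$. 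From Property \ref{regularproperty} and the Gram form $\mathbf W^{\mathrm T}\mathbf W=\frac{n}{n-1}(\mathbf I_n-\frac1n\mathbf J_n)$, every $\mathbf z\perp\mathbf 1_n$ satisfies $\mathbf z=\mathbf W^{\mathrm T}\mathbf y$ with $\mathbf y=\frac{n-1}{n}\mathbf W\mathbf z$. Therefore $\mathbf x^{\mathrm T}\mathbf M\mathbf x=\mathbf y^{\mathrm T}(\mathbf W\mathbf M\mathbf W^{\mathrm T})\mathbf y\le0$.

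This range argument, namely that $\mathbf W^{\mathrm T}$ maps onto $\mathbf 1_n^{\perp}$, which contains the range of $\mathbf P_{\mathbf A}^{\bot}$, is the only conceptual point. I expect it to be the main thing to state cleanly. The same argument shows that $\mathbf M$ and $\mathbf K_{\mathbf W}$ have matching inertia on the relevant subspaces, which will be useful later for counting zero eigenvalues.
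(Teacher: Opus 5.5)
Your proof is correct, but it reaches both claims by a different route from the paper.

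\textbf{Claim (1).} The paper also starts from $\mathbf W\mathbf P_{\mathbf A}^{\bot}=\mathbf W-\mathbf W\mathbf u\mathbf u^{\mathrm T}$. It then expands $\mathbf W\mathbf M\mathbf W^{\mathrm T}$ into the four terms of (\ref{equ.wmw}) and evaluates the cross terms with the stationarity identity $\mathbf H(\mathbf u)\mathbf u=(m-2)\alpha\mathbf u+(m-1)\beta\mathbf 1_n$ from (\ref{HU}). Finally it matches the sum against the simplified form $(m-1)\mathcal S_{\mathbf W}\mathbf v^{m-2}-\lambda\mathbf I_{n-1}-(m-2)\lambda\mathbf v\mathbf v^{\mathrm T}$ of $\mathbf K_{\mathbf W}$.

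Your observation $\mathbf W\mathbf u\mathbf u^{\mathrm T}=\mathbf v\mathbf v^{\mathrm T}\mathbf W$, i.e.\ $\mathbf W\mathbf P_{\mathbf A}^{\bot}=\mathbf P_{\mathbf v}^{\bot}\mathbf W$, removes that expansion. It lands directly on the sandwich form in (\ref{Mhessw}). It also does not need (\ref{KKTgradient}), so the sandwich identity holds at every feasible $\mathbf u$ (with $\alpha:=f(\mathbf u)$ and $\lambda:=\mathcal S_{\mathbf W}\mathbf v^{m}$), not only at KKT points. It makes explicit that $\mathbf W$ intertwines the two constraint projectors. Your computation of $\mathbf W\mathbf H(\mathbf u)\mathbf W^{\mathrm T}$ and the constant $c_{n,m}=(\frac{n-1}{n})^{(m-2)/2}$ coincide with the paper's.

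\textbf{Claim (2).} The paper applies Lemma \ref{AB_BA_eig} to the products $\mathbf W(\mathbf M\mathbf W^{\mathrm T})$ and $(\mathbf M\mathbf W^{\mathrm T})\mathbf W=\frac{n}{n-1}\mathbf M$. This gives a spectral correspondence, not just the semidefinite equivalence: the nonzero eigenvalues of $\mathbf M$ are $\frac{n-1}{n}c_{n,m}$ times those of $\mathbf K_{\mathbf W}$, and $\mathbf M$ has exactly one extra zero eigenvalue. That is the form needed to transfer the zero-eigenvalue counts (two for $\mathbf M$, one for $\mathbf K_{\mathbf W}$) used in the strict-local-maximum arguments.

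Your range argument via $\mathbf 1_n^{\perp}=\operatorname{range}(\mathbf W^{\mathrm T})\supseteq\operatorname{range}(\mathbf P_{\mathbf A}^{\bot})$ is more elementary and proves exactly what the lemma states. Your closing remark about matching inertia, however, is only asserted. You can get the paper's full statement from your own setup in one line. The matrix $\mathbf U:=\sqrt{\frac{n-1}{n}}\,\mathbf W^{\mathrm T}$ has orthonormal columns spanning $\mathbf 1_n^{\perp}$. Using the orthogonal matrix $[\mathbf U,\ \mathbf 1_n/\sqrt n]$ and $\mathbf M\mathbf 1_n=\mathbf 0_n$, $\mathbf M$ is orthogonally similar to $\frac{n-1}{n}\mathbf W\mathbf M\mathbf W^{\mathrm T}\oplus 0$.
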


The detailed proof is deferred to  Section  \ref{proof.lemma.wmw.k}.
The second claim means that the signs of all eigenvalues of  $    \mathbf  M    $, $  	\mathbf W  \mathbf  M    	\mathbf W^{\mathrm T}  $
and  $ \mathbf K_{\mathbf W} $
are the same,
and so  are their positive or negative definiteness properties.
With this conclusion, we can safely focus on the local optimality of  the KKT points of    model 	(\ref{optmodel}),
and  check 
 the  signs of the  eigenvalues of  the matrix  $\mathbf  M $.

Following the conclusions  presented in 	Lemma  \ref{Theorem_structureofall},		similarly,
we will separately discuss the cases of odd and even $m$.
 
	We emphasize again that    the special combination $(m,n) = (4,3)$ is excluded in the following discussion, since every 
	 feasible $\mathbf u$  that satisfies  
	the constraints and the sign settings of $\alpha$ and $\beta$
 is both locally maximized and minimized,
 and there is no need for the local optimality identification.
Then, for the remaining combinations where $n\geq 3$, $m\geq 3$  and  $(m,n) \neq  (4,3)$, as stated in  Lemma \ref{Theorem_structureofall}, 
 there are three structures   as presented
from
(\ref{u_classifyodd})
to
(\ref{u_classifyeven2})
and  their proof strategies  also differ from each other.
We thus conclude their theoretical results
 on the local optimality issue
 as the following three lemmas, and the detailed proof will be provided in  Sections
  \ref{section.prrof.lemma} and \ref{proof.lemma.Theorem_structureoflocaleven2}.

\begin{wllemma}\label{Theorem_structureoflocal}
	For the case where   $m$ is odd  stated  in Lemma \ref{Theorem_structureofall},  concerning  the solutions
	with the form of
	(\ref{u_classifyodd})
	which satisfy $ \mathbf u^{\mathrm T}\mathbf 1_{n}=0$ and $ \mathbf u^{\mathrm T}\mathbf u=1$,
	(1):
	when  $k=1$,
the solution is  a   strict local maximum of    model (\ref{optmodel});
	(2): when  $2 \le  k \le  \lfloor n/2  \rfloor $, it is  a   saddle point  of  model (\ref{optmodel}).
\end{wllemma}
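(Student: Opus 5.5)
The plan is to compute the spectrum of $\mathbf M$ in (\ref{Mhess2}) exactly, using the block structure of (\ref{u_classifyodd}). By (\ref{Hessianmatrix}), $\mathbf H(\mathbf u)$ is diagonal. Its first $k$ diagonal entries equal $h_a := (m-1)a^{m-2}-\alpha$, and its last $n-k$ entries equal $h_b := (m-1)b^{m-2}-\alpha$.

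The key structural observation is that $\mathbf u = a[\mathbf 1_k^{\mathrm T},\mathbf 0^{\mathrm T}]^{\mathrm T} + b[\mathbf 0^{\mathrm T},\mathbf 1_{n-k}^{\mathrm T}]^{\mathrm T}$ with $a\neq b$. Hence $\operatorname{span}\{\mathbf u,\mathbf 1_n\}$ coincides with the span of the two block indicator vectors. The range of $\mathbf P_{\mathbf A}^{\bot}$ in (\ref{projecteddenote}) is therefore the subspace of $\mathbf x=[\mathbf x_1^{\mathrm T},\mathbf x_2^{\mathrm T}]^{\mathrm T}$ with $\mathbf 1_k^{\mathrm T}\mathbf x_1=0$ and $\mathbf 1_{n-k}^{\mathrm T}\mathbf x_2=0$, which has dimension $n-2$. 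This subspace is invariant under the block-diagonal $\mathbf H$. So $\mathbf M$ has eigenvalue $0$ with multiplicity exactly $2$ (eigenvectors $\mathbf u,\mathbf 1_n$), eigenvalue $h_a$ with multiplicity $k-1$, and eigenvalue $h_b$ with multiplicity $n-k-1$.

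It remains to determine the signs of $h_a$ and $h_b$. Here $h_a=f'(a)$ and $h_b=f'(b)$ for the scalar function $f(x)=x^{m-1}-\alpha x-\beta$ analysed before Lemma \ref{Theorem_structureofall}. For odd $m$, $f''(x)=(m-1)(m-2)x^{m-3}\ge 0$ and vanishes only at $x=0$, so $f$ is strictly convex. By the proof of Lemma \ref{Theorem_structureofall}, $f$ has exactly the two real roots $b<0<a$. A strictly convex function with two distinct zeros $b<a$ must satisfy $f'(b)<0<f'(a)$, since otherwise the chord argument forces $f$ to be constant sign on one side. Thus $h_b<0<h_a$, without needing the explicit value of $\alpha$. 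This also covers the borderline case $k=n/2$ with $\alpha=0$. I expect this sign determination to be the step requiring the most care, because the explicit $\alpha=ka^m+(n-k)b^m$ is awkward to compare with $(m-1)a^{m-2}$ directly; the convexity argument avoids that comparison entirely.

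Combining these facts settles both cases. For $k=1$, no $h_a$ eigenvalue appears. All $n-2$ nonzero eigenvalues of $\mathbf M$ equal $h_b<0$, so $\mathbf M\preceq 0$ with exactly two zero eigenvalues, and the second-order sufficient criterion stated after (\ref{Aform}) (see the Appendix) gives a strict local maximum. For $2\le k\le\lfloor n/2\rfloor$, the eigenvalue $h_a>0$ appears $k-1\ge 1$ times. The eigenvalue $h_b<0$ appears $n-k-1\ge k-1\ge 1$ times, because $n-k\ge k$. Hence $\mathbf M$ is indefinite on the tangent space and the KKT point is a saddle point of (\ref{optmodel}). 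By Lemma \ref{lemma.wmw.k}, the same classification transfers to (\ref{optmodelori}).
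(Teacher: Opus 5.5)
Your proposal is correct and takes essentially the paper's route: the paper shows that $\mathbf u\mathbf u^{\mathrm T}+\frac1n\mathbf 1_n\mathbf 1_n^{\mathrm T}$ is the block-diagonal projector onto the two block indicator vectors, which is your span observation in explicit form. It then writes $\mathbf M$ as the direct sum of $\sigma_a(\mathbf I_k-\frac1k\mathbf J_k)$ and $\sigma_b(\mathbf I_{n-k}-\frac1{n-k}\mathbf J_{n-k})$, giving eigenvalues $\sigma_a$ ($k-1$ times), $\sigma_b$ ($n-k-1$ times) and $0$ (twice), exactly as your invariant-subspace argument does. Your strict-convexity derivation of $f'(b)<0<f'(a)$ is a slightly more rigorous statement of the monotonicity the paper reads off the shape of $f$; just note that your claim that $0$ has multiplicity exactly two holds only after you have shown $h_a,h_b\neq 0$.
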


\begin{wllemma}\label{Theorem_structureoflocaleven1}
	For the case where   $m$ is even  stated  in Lemma \ref{Theorem_structureofall},  concerning  the solutions
	with the form of  (\ref{u_classifyeven1})
	which satisfy $ \mathbf u^{\mathrm T}\mathbf 1_{n}=0$ and $ \mathbf u^{\mathrm T}\mathbf u=1$,
	denote
	\begin{align}
		l(m,n,k)
		=
		(m-1)nk^{m-2} - (n-k)^{m-1}- k^{m-1}.
	\end{align}

	(1):
	When  $k=1$,     it holds that  $	l(m,n,k)  <0 $, and
the solution is  a   strict local maximum of    model (\ref{optmodel});
	(2):  	when  $2 \le  k \le  \lfloor n/2  \rfloor $,
if
		$	l(m,n,k)  >0 $, 
		the  corresponding  solution  is a     strict local minimum  of    model (\ref{optmodel});
		if $	l(m,n,k)  <0 $, it is a saddle point  of    model (\ref{optmodel}); 
		if $	l(m,n,k)  = 0 $, it is degenerate but is not a local maximum.
\end{wllemma}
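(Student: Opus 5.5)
We would prove Lemma \ref{Theorem_structureoflocaleven1} by diagonalizing $\mathbf M$ in (\ref{Mhess2}) explicitly on the tangent space. We then read off the signs of its eigenvalues and invoke the second-order criteria recalled before Lemma \ref{lemma.wmw.k} (two zero eigenvalues plus negative definiteness on the rest gives a strict local maximum; a positive eigenvalue on the tangent space rules out a local maximum). By Lemma \ref{lemma.wmw.k}, these conclusions transfer back to model (\ref{optmodelori}).

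\textbf{Step 1: eigenstructure of $\mathbf M$.} For $\mathbf u=[a\mathbf 1_k^{\mathrm T}, b\mathbf 1_{n-k}^{\mathrm T}]^{\mathrm T}$, the Hessian (\ref{Hessianmatrix}) is diagonal with two values,
\[
h_a=(m-1)a^{m-2}-\alpha, \qquad h_b=(m-1)b^{m-2}-\alpha .
\]
The range of $\mathbf P_{\mathbf A}^{\bot}$ in (\ref{projecteddenote}) is $\{\mathbf x:\mathbf x^{\mathrm T}\mathbf u=0,\ \mathbf x^{\mathrm T}\mathbf 1_n=0\}$. Because $\mathbf u$ and $\mathbf 1_n$ span exactly the space of blockwise-constant vectors, this range is the direct sum of two subspaces: zero-sum vectors supported on the first block (dimension $k-1$) and zero-sum vectors supported on the second block (dimension $n-k-1$). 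On each subspace $\mathbf H$ acts as the scalar $h_a$, respectively $h_b$. Hence the spectrum of $\mathbf M$ is $0$ (twice, from $\mathbf u$ and $\mathbf 1_n$), $h_a$ with multiplicity $k-1$, and $h_b$ with multiplicity $n-k-1$.

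\textbf{Step 2: signs of $h_a,h_b$ via $l$.} From (\ref{ab_solu}) we have $a^2=t/n$ and $b^2=1/(tn)$ with $t=(n-k)/k$. For even $m$, $\alpha=ka^m+(n-k)b^m=n^{-m/2}\bigl(kt^{m/2}+(n-k)t^{-m/2}\bigr)$. Multiplying $h_a$ by the positive factor $n^{m/2}t^{m/2}k^{m-1}$ and simplifying should give
\[
\operatorname{sign}(h_a)=\operatorname{sign}\,l(m,n,n-k).
\]
The symmetric computation, exchanging $k\leftrightarrow n-k$ and $a\leftrightarrow b$, gives $\operatorname{sign}(h_b)=\operatorname{sign}\,l(m,n,k)$. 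This algebraic reduction is the step we expect to need the most care; all exponents are integers because $m$ is even.

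\textbf{Step 3: the cases.} For $k=1$ the $h_a$-block is empty, and
\[
l(m,n,1)=(m-1)n-(n-1)^{m-1}-1 .
\]
For $n\ge3$ and even $m\ge4$ this should be shown negative except at $(m,n)=(4,3)$, where it equals $0$ and which is excluded. We would handle this by induction or monotonicity in $m$ and $n$, starting from $(4,4)$ and $(6,3)$; for example $l(4,4,1)=12-27-1<0$ and $l(6,3,1)=15-32-1<0$. Then $\mathbf M$ has exactly two zero eigenvalues and is negative definite on the tangent space, so $\mathbf u$ is a strict local maximum.

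For $2\le k\le\lfloor n/2\rfloor$ we have $n-k\ge k$ and $m-1\ge3$. Hence
\[
l(m,n,n-k)\ge 2(m-1)(n-k)^{m-1}-2(n-k)^{m-1}>0,
\]
so $h_a>0$ with multiplicity $k-1\ge1$. This exhibits an ascent direction, so $\mathbf u$ is never a local maximum. The sign of $h_b$, i.e. of $l(m,n,k)$, then decides the type: if $l>0$, $\mathbf M$ is positive definite on the tangent space and $\mathbf u$ is a strict local minimum; if $l<0$, $\mathbf M$ has mixed signs and $\mathbf u$ is a saddle point; if $l=0$, $\mathbf u$ is degenerate but still not a local maximum because of the positive $h_a$ directions.
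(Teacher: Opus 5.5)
Your proposal follows the paper's proof. It uses the same spectrum of $\mathbf M$: two zeros, $h_a$ (the paper's $\sigma_a$) with multiplicity $k-1$, and $h_b=\sigma_b$ with multiplicity $n-k-1$. It uses the same identity $\sigma_b=l(m,n,k)/\bigl((n-k)^{r-1}k^{r-1}n^{r}\bigr)$ with $r=m/2$, and the same case split. For $l(m,n,1)<0$ you only sketch the monotonicity argument; the paper makes it explicit via $l(m+1,n,1)-l(m,n,1)=n-(n-1)^{m-1}(n-2)<0$ and $l(4,n+1,1)-l(4,n,1)=2-3n(n-1)<0$.

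Two of your steps differ from the paper's:
\begin{itemize}
\item You read off the spectrum from the invariant-subspace decomposition (zero-sum vectors on each block) instead of the paper's explicit block matrices. This is cleaner but equivalent.
\item You obtain the sign of $h_a$ from the $k\leftrightarrow n-k$ symmetry, $h_a=l(m,n,n-k)/\bigl((n-k)^{r-1}k^{r-1}n^{r}\bigr)$. The paper instead uses the root geometry in Remark~\ref{rem.sign.even}: $\sigma_a=f'(a)>0$ because $a$ is the largest root of $x^{m-1}-\alpha x-\beta$. Your version is more self-contained, since it does not rely on the sign convention $\beta\ge 0$.
\end{itemize}

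The displayed bound $l(m,n,n-k)\ge 2(m-1)(n-k)^{m-1}-2(n-k)^{m-1}$ is false as written. Bounding $(m-1)n(n-k)^{m-2}$ below by $2(m-1)(n-k)^{m-1}$ requires $n\ge 2(n-k)$, i.e.\ $k\ge n/2$. That is the opposite of your hypothesis $k\le\lfloor n/2\rfloor$. In fact the whole bound holds only when $k=n-k$ and fails for every $k<n-k$. For example, $(m,n,k)=(4,5,2)$ gives $l(4,5,3)=100$, while your lower bound equals $108$. The conclusion survives with the correct estimate. Since $n>n-k$ and $k\le n-k$, you get $l(m,n,n-k)>(m-1)(n-k)^{m-1}-2(n-k)^{m-1}=(m-3)(n-k)^{m-1}>0$ for $m\ge 4$. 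With that one-line repair your argument is complete.
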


\begin{wllemma}\label{Theorem_structureoflocaleven2}
	For the case where   $m$ is even  stated  in Lemma \ref{Theorem_structureofall},  concerning  the solutions
	with the form of  (\ref{u_classifyeven2}) 	 which satisfy $ \mathbf u^{\mathrm T}\mathbf 1_{n}=0$ and $ \mathbf u^{\mathrm T}\mathbf u=1$,
 none of them can be 
 a  local  maximum   of    model (\ref{optmodel}).
\end{wllemma}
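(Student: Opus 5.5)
The plan is to exhibit, at every KKT point $\mathbf u$ of the form (\ref{u_classifyeven2}), a feasible perturbation along which the objective $f(\mathbf u)=\sum_i u_i^m$ strictly increases. For $m\ge 6$ this will come from a tangent direction $\mathbf z$, with $\mathbf z^{\mathrm T}\mathbf 1_n=\mathbf z^{\mathrm T}\mathbf u=0$, on which $\mathbf z^{\mathrm T}\mathbf M\mathbf z=\mathbf z^{\mathrm T}\mathbf H(\mathbf u)\mathbf z>0$. For $m=4$ I will use an explicit feasible curve instead. By Lemma \ref{lemma.wmw.k}, either conclusion transfers back to model (\ref{optmodelori}).

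The key observation is that $\mathbf H(\mathbf u)$ in (\ref{Hessianmatrix}) is diagonal, with $h_i=(m-1)u_i^{m-2}-\alpha=P'(u_i)$, where $P(x)=x^{m-1}-\alpha x-\beta$. From the monotonicity analysis of $P$ preceding Lemma \ref{Theorem_structureofall}, the three distinct roots satisfy $e<x_1<d<x_0<c$. Hence $h_c>0$, $h_e>0$ and $h_d<0$.

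\emph{Case $p\ge 2$ or $s\ge 2$.} Take $\mathbf z=\mathbf e_i-\mathbf e_j$ with $i,j$ two indices in the $c$-block (respectively the $e$-block). Then $\mathbf z^{\mathrm T}\mathbf 1_n=0$, and $\mathbf z^{\mathrm T}\mathbf u=0$ because $u_i=u_j$. This gives $\mathbf z^{\mathrm T}\mathbf M\mathbf z=2h_c>0$ (respectively $2h_e>0$). So $\mathbf M$ has a positive eigenvalue and $\mathbf u$ is not a local maximum.

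\emph{Case $p=s=1$, so $q=n-2\ge 1$.} Supported on only the $c$- and $e$-coordinates, the two constraints force $\mathbf z=\mathbf 0$. I therefore use one $c$-coordinate, one $e$-coordinate and one $d$-coordinate, with $\mathbf z=(d-e,\;c-d,\;-(c-e))$ on these three and zeros elsewhere; it satisfies both linear constraints. Factor $P(x)=(x-c)(x-d)(x-e)g(x)$, so that $P'(c)=(c-d)(c-e)g(c)$, and similarly at $d$ and $e$. Then
\begin{align*}
\mathbf z^{\mathrm T}\mathbf H\mathbf z=(c-d)(c-e)(d-e)\bigl[(d-e)g(c)+(c-d)g(e)-(c-e)g(d)\bigr]
=(c-d)(c-e)^2(d-e)^2(c-d)\,g[c,d,e]\cdot\tfrac{1}{c-d}\cdot\ldots,
\end{align*}
whose sign is that of the second divided difference $g[c,d,e]$. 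Because $P$ vanishes at $c,d,e$, we have $g(x)=P[c,d,e,x]$. Therefore $g[c,d,e]=P[c,c,d,d,e,e]$, a fifth-order divided difference in which the linear part $-\alpha x-\beta$ drops out. This leaves $(x^{m-1})[c,c,d,d,e,e]=h_{m-6}(c,c,d,d,e,e)$, the complete homogeneous symmetric polynomial of degree $m-6$. It vanishes identically for $m=4$. For even $m\ge 6$, $m-6$ is even, and complete homogeneous polynomials of even degree are positive definite, so $g[c,d,e]>0$. Hence $\mathbf z^{\mathrm T}\mathbf M\mathbf z>0$ and $\mathbf u$ is a saddle point.

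\emph{Case $m=4$ with $p=s=1$ (main obstacle).} Here the roots satisfy $c+d+e=0$. Combined with $c+qd+e=0$, this forces $(q-1)d=0$. If $q=1$, then $n=3$, which is excluded. So $q\ge 2$, $d=0$, $\beta=0$ and $\mathbf u=(1/\sqrt2,0,\dots,0,-1/\sqrt2)$ with $f(\mathbf u)=\frac12$. The direction above is genuinely flat: the curve $(r+t,-2t,0,\dots,-r+t)$ with $r^2=(1-6t^2)/2$ keeps $f\equiv\frac12$. Second-order information is therefore insufficient, and I would build an explicit feasible curve using two zero coordinates.

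First I would try $\mathbf u(t)=(r+t,\,-t+\tau,\,-t-\tau,\,0,\dots,0,\,-r+t)$, where $r$ is fixed by the norm constraint and $\tau=\tau(t)$ is chosen, for example $\tau=\kappa t$. Expanding $f$ to fourth order should show that splitting the mass between two coordinates gives $f(\mathbf u(t))>\frac12$ for small $t\ne 0$. As a fallback, I would compare with nearby points of the form (\ref{u_classifyeven1}), such as the $k=1$ value $\frac{(n-1)^3+1}{n^2(n-1)}>\frac12$, and connect them to $\mathbf u$ by a feasible arc on which $f$ increases. This degenerate $m=4$ situation is where I expect the real difficulty to lie.
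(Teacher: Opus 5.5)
\textbf{Gap: the subcase $m=4$, $p=s=1$ is left unproved.} You set this subcase aside on the grounds that ``second-order information is insufficient,'' and that claim is false.

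Your test vector puts the whole compensating entry $-(c-e)$ on a single $d$-coordinate. It therefore lives in a three-coordinate subspace on which $f\equiv\frac12$: this is the $(4,3)$ situation embedded in $\mathbb R^n$, which is why it is flat to all orders. But here $q=n-2\ge 2$. The negative Hessian entry $h_d$ is weighted by $\sum z_i^2$ over the $d$-block, and for a fixed block sum this is minimized by spreading the mass evenly.

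Concretely, at $\mathbf u=(\frac{1}{\sqrt2},0,\dots,0,-\frac{1}{\sqrt2})^{\mathrm T}$ take $\mathbf z=(1,-\frac{2}{q}\mathbf 1_q^{\mathrm T},1)^{\mathrm T}$. It satisfies $\mathbf 1_n^{\mathrm T}\mathbf z=\mathbf u^{\mathrm T}\mathbf z=0$, and
\[
\mathbf z^{\mathrm T}\mathbf M\mathbf z=\mathbf z^{\mathrm T}\mathbf H\mathbf z=2\cdot 1-q\cdot\tfrac{4}{q^2}\cdot\tfrac12=\tfrac{2(q-1)}{q}>0 .
\]
So the point is an ordinary saddle. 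Indeed, your own first trial curve with $\tau=0$ gives $f=\frac12+2t^2-12t^4$, which already increases at second order.

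As written, however, this subcase is only a plan (``should show''). The fallback through the $k=1$ points says nothing about the local behaviour at $\mathbf u$, so the proof is incomplete here.

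\textbf{The repair is uniform.} For $p=s=1$, replace your single entry $-(c-e)$ by $-\frac{c-e}{q}$ on each of the $q$ $d$-coordinates. Both linear constraints still hold, and
\[
\mathbf z^{\mathrm T}\mathbf H\mathbf z=(c-d)^2(c-e)^2(d-e)^2\,g[c,d,e]+\tfrac{q-1}{q}(c-e)^2\,|h_d| .
\]
The first term is your divided-difference quantity. State it cleanly via $(d-e)g(c)+(c-d)g(e)-(c-e)g(d)=(c-d)(c-e)(d-e)\,g[c,d,e]$, since your displayed chain is garbled. This term equals $0$ for $m=4$ and is $>0$ for even $m\ge 6$. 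The second term is $>0$ whenever $q\ge 2$, which covers $m=4$ because $(m,n)\neq(4,3)$.

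With this fix your argument is correct and genuinely different from the paper's.
\begin{itemize}
\item For $p+s\ge 3$, the paper uses Weyl's inequality after establishing the inertia of $\mathbf H^{\ast}$ (Lemmas \ref{Theorem_Gstructure} and \ref{lemma.g.sign}). Your single test vector $\mathbf e_i-\mathbf e_j$ inside the $c$- or $e$-block replaces that whole analysis.
\item For $p=s=1$, the paper first proves $d=0$ (Vieta for $m=4$, Lemma \ref{lem.ps1} for $m\ge 6$) and then computes $\mathbf M$ explicitly. Your divided-difference identity never needs to locate $d$.
\item The price is the positivity of the even-degree complete homogeneous symmetric polynomial $h_{m-6}(c,c,d,d,e,e)$. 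You can get this more directly from Hermite--Genocchi, since $P^{(5)}(x)\propto x^{m-6}\ge 0$ with equality only at $x=0$.
\end{itemize}
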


\subsection{Proof  for   Theorem \ref{Theorem.local_eigenpair}  }
\label{proofmain}
We 
  can then  finish the proof for Theorem \ref{Theorem.local_eigenpair} as  follows.

\textbf{[Proof for the main Theorem \ref{Theorem.local_eigenpair}]}
  	We first clarify the special combination $(m,n) = (4,3)$ where
the  	objective  value 
	$
	\mathcal S_{\mathbf W } \mathbf v^{m}
	$   is a constant as  commented in Remark  \ref{RemarkScope}.
	In this case, 
	any unit-length vector is  an eigenpair of $\mathcal S_{\mathbf W}$. 
	Therefore, all of them are 
	locally maximized and minimized.

We turn to the  other   combinations with  $(m,n) \neq (4,3)$ and $n \ge 3,  m \ge 3$.
First,   combining 	Lemma  \ref{lemma.coincidence.KKT} with  	Lemma  \ref{lemma.wmw.k}
ensures the equivalence between  (\ref{optmodelori})
and
(\ref{optmodel}).
Concerning the reformulated model    (\ref{optmodel}),
Lemma
\ref{Theorem_structureoflocal},  \ref{Theorem_structureoflocaleven1}, 
and  \ref{Theorem_structureoflocaleven2}
discuss the local optimality of all KKT points, up to sign and permutation equivalences,   as   presented in Lemma \ref{Theorem_structureofall},
in both odd and even $m$ cases.
Combining their conclusions yields that   the  only  locally maximized points of   	 (\ref{optmodel})
are the solutions where $k=1$,
which equivalently  are 
 the vectors in the  regular simplex  frame,   according to   Lemma  \ref{vectorregularsimplexframe}.
 Combining the above discussion  for both the case $(m,n) = (4,3)$ and the remaining cases with  $n \ge 3, m\ge 3$, 
the proof of  Theorem \ref{Theorem.local_eigenpair} is complete.  $\blacksquare$

\begin{wlremark}\label{remark.robust.formula}
	In this remark,
	we will    address   the question  raised  in Remark \ref{remark.comparsion.robust}.
	 Specifically,
	in this work, we
	perform two
	main transformations   to deal with the robustness issue.
	One is to resort to analyzing local
	optimality of eigenpairs. 	We have explained the connection between local
	optimality and robustness;
	the other is to  derive  an   equivalent model
	(\ref{optmodel}) to further  check local
	optimality.
 One natural question may arise here. Since model (\ref{optmodel}) has been obtained, we could similarly define its corresponding tensor power method and Jacobian matrix. So why not do this directly, rather than focusing on the indirect local optimality problem?
	The reason is explained as follows.
	Indeed, we can define tensor power method for (\ref{optmodel}),
	which   has  the form of
	\begin{equation}
		\phi (\mathbf{u}):=
		\frac
		{\mathbf u ^{\circledast^{m-1}}	-\beta\mathbf 1_n  	}
		{	\left\|  \mathbf u ^{\circledast^{m-1}}	-\beta\mathbf 1_n   \right\|},
	\end{equation}
	where $\beta$ is calculated by (\ref{alphares}).
	Its  corresponding Jacobian matrix can be derived (we omit the detailed derivation
	and  refer to Lemma 3.3 in \cite{RobustEigen}  for a complete process):
	\begin{equation}\label{Jacobianmatirxforu}
		\mathbf{J}(\mathbf{u}):=\frac{m-1}{\alpha}
		\left(
		\operatorname{diag} (\mathbf u ^{\circledast^{m-2}})
		-   \mathbf{u} \mathbf{u}^{\mathrm T}\operatorname{diag} (\mathbf u ^{\circledast^{m-2}})  \right),
	\end{equation}
	which is an analogous form  to (\ref{Jacobianmatirx}), and $\alpha \neq 0$.
	It can be seen that  checking the spectral radius for
	(\ref{Jacobianmatirxforu})  at  a   solution
	$\mathbf u$  is also difficult.
	The result of $ \mathbf{u} \mathbf{u}^{\mathrm T}$ is listed in (\ref{publock}),
 and thus
	the calculation for $ \mathbf{u} \mathbf{u}^{\mathrm T}\operatorname{diag} (\mathbf u ^{\circledast^{m-2}})$
	will  not  have    the block  structure.
	In contrast, in local  optimality  checking, we can obtain
	a better block structure for $\mathbf M$  in (\ref{Mhess2})  by further  utilizing  $ \mathbf u^{\mathrm T}\mathbf 1_{n} = \mathbf 1_{n} ^{\mathrm T} \mathbf u =  0 $, as can  be seen  from
	(\ref{mblock}).

	Table \ref{table.comparison.local.robust}
	intuitively compares   the checking criteria for
	robust and  local optimality  issues.
	The above illustration
	implies
	that among  four  formulas, namely 
	(\ref{Jacobianmatirx}),     (\ref{Jacobianmatirxforu}), (\ref{Mhess})
	and   (\ref{Mhess2}),
	checking local optimality by (\ref{Mhess2})  will be more convenient and
	effective than the other options.
	This justifies the necessity of our two main transformations, as
	mentioned in this remark.
	In addition,
	local optimality
	checking  provides a more precise way to   group the eigenpairs  into
	three classes
	and provides   an  optimization landscape for the nonconvex and NP-hard model.
	Compared to
	binary classification for   the   robustness
	issue, as can be seen from Fig. \ref{flow.robust.local},
	we can  simultaneously answer  both  the   local optimality and  robustness   issues
	when adopting the above strategy.
\end{wlremark}

\begin{table*}[t]
	\normalsize
	\centering
	\caption{
	An intuitive comparison  between the
	 local optimality and robustness  criteria
	  on two equivalent models (\ref{optmodelori}) and (\ref{optmodel}) related to the  regular simplex tensor eigenpairs. 
	}
	\label{cc1}
	\renewcommand\arraystretch{1.2}
	\begin{tabular}{  |c | c | c |   }
		\hline
		Optimization model
		 &
		$
			\begin{cases}
				\max\limits_{\mathbf v} \quad \mathcal S \mathbf v^{m}
				=
				(\sum\limits_{i=1}^{n} \mathbf{w}_{i}^{\circ m}) \mathbf v^{m} \\
				\text{s.t.}  \quad \mathbf v^{\mathrm {T}}\mathbf v=1
			\end{cases}
		$
		 &
		$
			\begin{cases}
				\max\limits_{\mathbf u}  \quad  \sum\limits_{i=1}^{n}  u_{i}^{m}
				\\
				\text { s.t. }   \quad
				\mathbf   u^{\mathrm T}   \mathbf   u =1 ,
				\quad
				\mathbf   u^{\mathrm T}   \mathbf   1_{n} =0
			\end{cases}
		$                                                                                                                              \\
		\hline
		Robustness criterion
		 & (\ref{Jacobianmatirx})
		 & (\ref{Jacobianmatirxforu})
		\\
		\hline
		Local optimality  criterion
		 & $\mathbf  P_{\mathbf  v } ^{\bot}   \mathbf H (\mathbf v)  \mathbf  P_{\mathbf  v }^{\bot}$  in (\ref{Mhess})
		 & $\mathbf  M = \mathbf  P_{\mathbf  A } ^{\bot}   \mathbf H (\mathbf u)  \mathbf  P_{\mathbf  A }^{\bot}$  in (\ref{Mhess2})
		\\
		\hline
	\end{tabular}
	\label{table.comparison.local.robust}
\end{table*}

\subsection{Proof of the  developed lemmas}
\label{section.prrof.lemma}

In this part, we will provide the detailed proof for   	 Lemmas	  \ref{lemma.wmw.k},
\ref{Theorem_structureoflocal},   and  \ref{Theorem_structureoflocaleven1}
presented  above.

\subsubsection{Proof of Lemma \ref{lemma.wmw.k}}
\label{proof.lemma.wmw.k}
\begin{proof}
We first prove  Claim  (1).
	We begin our proof  by   simplifying the form of $\mathbf W  \mathbf  M    	\mathbf W^{\mathrm T}$.
	By $	\mathbf{W}  \mathbf{1}_{n} =
		\mathbf{0}_{n-1}
	$ stated in Property \ref{regularproperty},
	it holds that
	\begin{align}\label{projecteddenotew}
		\mathbf W  \mathbf  P_{\mathbf  A } ^{\bot}
		=
		\mathbf W   (	\mathbf  I_{n}  -
		\mathbf u\mathbf u^{\mathrm T}
		- \frac  1n \mathbf 1_{n}\mathbf 1_{n}^{\mathrm T}  )
		=
		\mathbf W  -  \mathbf W	\mathbf u\mathbf u^{\mathrm T}  .
	\end{align}
	Then,  $\mathbf W  \mathbf  M    	\mathbf W^{\mathrm T}$
	is further rewritten as
	\begin{align}\label{equ.wmw}
		\mathbf W  \mathbf  M    	\mathbf W^{\mathrm T}
		 & =
		\mathbf W  \mathbf  P_{\mathbf  A } ^{\bot}   \mathbf H (\mathbf u)   \mathbf  P_{\mathbf  A } ^{\bot} \mathbf W^{\mathrm T}
		=
		( \mathbf W  -  \mathbf W	\mathbf u\mathbf u^{\mathrm T} )
		\mathbf H (\mathbf u)
		( \mathbf W  -  \mathbf W	\mathbf u\mathbf u^{\mathrm T} )^{\mathrm T}
		\nonumber \\
		 & =
		\mathbf W  \mathbf H (\mathbf u)    \mathbf W^{\mathrm T}
		-
		\mathbf W	\mathbf u\mathbf u^{\mathrm T}    \mathbf H (\mathbf u)  \mathbf W^{\mathrm T}
		-
		\mathbf W    \mathbf H (\mathbf u)	\mathbf u\mathbf u^{\mathrm T}   \mathbf W^{\mathrm T}
		+
		\mathbf W	\mathbf u\mathbf u^{\mathrm T}    \mathbf H (\mathbf u)  \mathbf u	\mathbf u^{\mathrm T}   \mathbf W^{\mathrm T}.
	\end{align}

	We separately analyze the above four terms in the following.
	The first term is  dealt with first.
	Recall that our final aim is to build a connection with $\mathbf K_{\mathbf W}$  in  (\ref{Mhessw}).
	 First, it can be derived that
	\begin{align}\label{swvm2}
		\mathcal{S}_{\mathbf W}  \mathbf v^{m-2}
		 & =
		(
		\sum_{i=1}^{n} \mathbf{w}_{i}^{\circ m}
		)  \mathbf v^{m-2}
		=
		\sum_{i=1}^{n} (  \mathbf w_{i}^{\mathrm T} \mathbf  v)^{ m-2}
		\mathbf{w}_{i}
		\mathbf{w}_{i}^{\mathrm T}
		\nonumber \\
		 & =
		(
		\sqrt {
			\frac   { n}{n-1}
		} )^{m-2}
		\sum_{i=1}^{n}  u_i ^{ m-2}
		\mathbf{w}_{i}
		\mathbf{w}_{i}^{\mathrm T}
		=
		(
		\sqrt {
			\frac   { n}{n-1}
		} )^{m-2}
		\mathbf  W
		\operatorname{diag} (\mathbf u ^{\circledast^{m-2}})
		\mathbf W^{\mathrm T},
	\end{align}
	where in the third equation,
	we use that
	$u_i :=
		\sqrt {
			\frac  {n-1}{ n}
		}
		\mathbf  v^{\mathrm T}  \mathbf w_i
	$ by 	(\ref{udenote}).
	Using this and the fact that  	 $     {\mathbf W} {\mathbf W}^{\mathrm T} =\frac  {  n  }{n-1}  \mathbf I_{n-1}$,
	it holds that
	\begin{align}\label{wHessianmatrixwt}
		\mathbf  W
		\mathbf H (\mathbf u)
		\mathbf W^{\mathrm T}
		 & =(m-1) \mathbf  W \operatorname{diag} (\mathbf u ^{\circledast^{m-2}})  \mathbf W^{\mathrm T}
		-\alpha \mathbf  W  \mathbf W^{\mathrm T}
		\nonumber                                                                                        \\
		 & =
		(
		\sqrt {
			\frac  {n-1}  { n}
		} )^{m-2}
		(m-1)
		\mathcal{S}_{\mathbf W}  \mathbf v^{m-2}
		-
		(
		\sqrt {
			\frac   {n-1} { n}
		} )^{m}
		\lambda
		\cdot
		\frac   { n}{n-1}
		\mathbf I_{n-1}
		\nonumber                                                                                        \\
		 & =
		(
		\sqrt {
			\frac  {n-1}  { n}
		} )^{m-2}
			[
				(m-1)
				\mathcal{S}_{\mathbf W}  \mathbf v^{m-2}
				-
				\lambda
				\mathbf I_{n-1}
			]	,
	\end{align}
	where in the second equation,  we use
	that
	$ \lambda = \mathcal{S}  \mathbf v^{m}
		=
		(
		\sqrt {
			\frac  { n}{n-1}
		} )^{m} \alpha$
	by  (\ref{eq.obj.pro})  and (\ref{alphares}).

	We deal with the second and third  terms in (\ref{equ.wmw}),  which are equivalent.
	By definition, it holds
	that
	\begin{align}\label{HU}
		\mathbf H (\mathbf u)
		\mathbf u
		 & =
		((m-1) \operatorname{diag} (\mathbf u ^{\circledast^{m-2}})
		-\alpha \mathbf  I_{n}) \mathbf u
		=
		(m-1)  \mathbf u ^{\circledast^{m-1}}-\alpha \mathbf u
		\nonumber \\
		 & =
		(m-1) \alpha  \mathbf u + (m-1) \beta \mathbf  1_{n}-\alpha \mathbf u
		=
		(m-2) \alpha  \mathbf u + (m-1) \beta \mathbf  1_{n},
	\end{align}
	where  (\ref{KKTgradient}) is utilized in the third  equation.
	By $	\mathbf{W}  \mathbf{1}_{n} =
		\mathbf{0}_{n-1}
	$ stated in Property \ref{regularproperty} again
	and
	$  \mathbf   u :=
		\sqrt {
			\frac  {n-1}{ n}
		}
		{\mathbf W}^{\mathrm T}  {\mathbf v} $  in  	(\ref{udenote}),
	it holds that
	\begin{align}\label{wHessianmatrixwt2}
		\mathbf W	\mathbf u\mathbf u^{\mathrm T}    \mathbf H (\mathbf u)  \mathbf W^{\mathrm T}
		 & =
		\mathbf W    \mathbf H (\mathbf u)	\mathbf u\mathbf u^{\mathrm T}   \mathbf W^{\mathrm T}
		=
		\mathbf W [ (m-2) \alpha  \mathbf u + (m-1) \beta \mathbf  1_{n}  ]  \mathbf u^{\mathrm T}   \mathbf W^{\mathrm T}
		=
		(m-2) \alpha  \mathbf W \mathbf u  \mathbf u^{\mathrm T}   \mathbf W^{\mathrm T}
		\nonumber \\
		 & =
		(m-2)(
		\sqrt {
			\frac   {n-1} { n}
		} )^{m}
		\lambda
		\cdot
		\frac   {n} { n-1}    \mathbf v  \mathbf v^{\mathrm T}
		\nonumber \\
		 & =
		(
		\sqrt {
			\frac  {n-1}  { n}
		} )^{m-2}
		(m-2)
		\lambda
		\mathbf v  \mathbf v^{\mathrm T} ,
	\end{align}

	The last term in  (\ref{equ.wmw})
	can be analyzed in a similar way, and it holds that
	\begin{align}\label{wHessianmatrixwt3}
		\mathbf W	\mathbf u\mathbf u^{\mathrm T}    \mathbf H (\mathbf u)  \mathbf u 	\mathbf u^{\mathrm T} \mathbf W^{\mathrm T}
		 & =
		\mathbf W	\mathbf u  \mathbf u^{\mathrm T}   [ (m-2) \alpha  \mathbf u + (m-1) \beta \mathbf  1_{n}  ]  \mathbf u \mathbf W^{\mathrm T}
		=
		(m-2) \alpha  \mathbf W \mathbf u  \mathbf u^{\mathrm T}   \mathbf W^{\mathrm T}
		\nonumber \\
		 & =
		(
		\sqrt {
			\frac  {n-1}  { n}
		} )^{m-2}
		(m-2)
		\lambda
		\mathbf v  \mathbf v^{\mathrm T} .
	\end{align}

	Substituting  (\ref{wHessianmatrixwt}),  (\ref{wHessianmatrixwt2}) and  (\ref{wHessianmatrixwt3})
	into (\ref{equ.wmw}), with some simplification,
	 yields   that
	\begin{align}\label{equ.wmw.2}
		\mathbf W  \mathbf  M    	\mathbf W^{\mathrm T}
		=
		(
		\sqrt {
			\frac  {n-1}  { n}
		} )^{m-2}
		\mathbf  {K }_{\mathbf W},
	\end{align}
	and
	  the positive coefficient  denoted  $c_{n,m}$  is thus determined. 
	The first claim is proved.

	(2): To prove  Claim  (2), we need to analyze  
	 the signs of  the eigenvalues  of two matrices.
	The  conclusion  in  Lemma
	\ref{AB_BA_eig}
	implies that the  non-zero  eigenvalues  of  $\mathbf W  \mathbf  M    	\mathbf W^{\mathrm T}$
	are the same as those  of  a new matrix  $ \mathbf  M    	\mathbf W^{\mathrm T}\mathbf W $.	 
	$	\mathbf W^{\mathrm T}\mathbf W$ can be decomposed as
	$ 	\mathbf W^{\mathrm T}\mathbf W = -\frac{1}{n-1}  \mathbf 1_{n}\mathbf 1_{n}^{\mathrm T}   +
		\frac{n}{n-1}    \mathbf I_{n}$.
	 Then,
	$ \mathbf  M    	\mathbf W^{\mathrm T}\mathbf W $
	can be simplified as
	\begin{align}\label{equ.mwwt}
		\mathbf  M    	\mathbf W^{\mathrm T}\mathbf W
		 & \nonumber  =
		\mathbf  P_{\mathbf  A } ^{\bot}   \mathbf H (\mathbf u)  \mathbf  P_{\mathbf  A }^{\bot}     ( -\frac{1}{n-1}  \mathbf 1_{n}\mathbf 1_{n}^{\mathrm T}   +
		\frac{n}{n-1}   \mathbf I_{n})
		=
		\mathbf  P_{\mathbf  A } ^{\bot}   \mathbf H (\mathbf u)  \mathbf  P_{\mathbf  A }^{\bot}     ( -\frac{1}{n-1}  \mathbf 1_{n}\mathbf 1_{n}^{\mathrm T}   +
		\frac{n}{n-1}\mathbf I_{n})
		\\
		 & =
		-\frac{1}{n-1}   \mathbf  P_{\mathbf  A } ^{\bot}   \mathbf H (\mathbf u)  \mathbf  P_{\mathbf  A }^{\bot}      \mathbf 1_{n}\mathbf 1_{n}^{\mathrm T}   +
		\frac{n}{n-1} \mathbf  P_{\mathbf  A } ^{\bot}   \mathbf H (\mathbf u)  \mathbf  P_{\mathbf  A }^{\bot}
		=
		\frac{n}{n-1}  \mathbf  M  ,
	\end{align}
	where in the last equation, we use  the fact that
	$  \mathbf  P_{\mathbf  A }^{\bot}  \mathbf 1_{n}\mathbf 1_{n}^{\mathrm T}   = \mathbf O$.
Combining (\ref{equ.wmw.2}) with (\ref{equ.mwwt}) implies that 
 the signs of the nonzero eigenvalues
 of $\mathbf{M}$ and $\mathbf{K}_{\mathbf{W}}$ are consistent, and their positive 
  semi-definiteness properties coincide.
	The proof is complete.      $\blacksquare$
\end{proof}

\subsubsection{Proof of  Lemma   \ref{Theorem_structureoflocal}}\label{proof.lemma.Theorem_structureoflocal}

\begin{proof}
	By  considering  the  structure of   $\mathbf u$ as  shown in  (\ref{u_classifyodd})   in Lemma \ref{Theorem_structureofall},
	$ \mathbf u\mathbf u^{\mathrm T} $  can  be  presented in
	  a  $2 \times 2$  block  form,  which  follows:
	\begin{align}\label{publock}
		\mathbf u\mathbf u^{\mathrm T}
		=
		\begin{bmatrix}
			a^{2} \mathbf J_{k}            & ab  \mathbf J_{k \times (n-k)} \\
			ab  \mathbf J_{(n-k) \times k} & b^{2} \mathbf J_{(n-k)}
		\end{bmatrix}
		=
		\begin{bmatrix}
			a^{2} \mathbf J_{k}                   & -\frac 1n  \mathbf J_{k \times (n-k)} \\
			-\frac 1n  \mathbf J_{(n-k) \times k} & b^{2} \mathbf J_{(n-k)}
		\end{bmatrix},
	\end{align}
	where,   based on (\ref{ab_solu}), it holds that
	$ab=   - \frac  1n $
	.
	Similarly,
	$\frac  1n \mathbf 1_{n}\mathbf 1_{n}^{\mathrm T} $   can also be expressed as
	\begin{align}
		\label{p1nblock}
		\frac  1n \mathbf 1_{n}\mathbf 1_{n}^{\mathrm T}
		=
		\begin{bmatrix}
			\frac 1n \mathbf J_{k}               & \frac 1n  \mathbf J_{k \times (n-k)} \\
			\frac 1n  \mathbf J_{(n-k) \times k} & \frac 1n \mathbf J_{(n-k)}
		\end{bmatrix}.
	\end{align}

	Summing (\ref{publock}) and (\ref{p1nblock})  yields
	\begin{align}
		\label{u1nblocksum}
		\mathbf u\mathbf u^{\mathrm T} + \frac  1n \mathbf 1_{n}\mathbf 1_{n}^{\mathrm T}
		=
		\begin{bmatrix}
			(a^{2} + \frac 1n) \mathbf J_{k} & \mathbf O_{k \times (n-k)}          \\
			\mathbf O_{ (n-k) \times k }     & (b^{2} + \frac 1n)\mathbf J_{(n-k)}
		\end{bmatrix}
		=
		\begin{bmatrix}
			\frac 1k \mathbf J_{k}       & \mathbf O_{k \times (n-k)}     \\
			\mathbf O_{ (n-k) \times k } & \frac{1}{n-k}\mathbf J_{(n-k)}
		\end{bmatrix},
	\end{align}
	where  for the second  equation,   we     use 
	\begin{align}\label{equ.a2b}
		a^{2} + \frac 1n =
		(\sqrt{\frac{n-k}{k n}} )^{2} +  \frac 1n
		=
		\frac{n-k}{k n} + \frac 1n
		=
		\frac 1k,
		b^{2} + \frac 1n
		=
		\frac{1}{n-k}.
	\end{align}

	Substituting  (\ref{u1nblocksum})  into (\ref{projecteddenote})
  yields  that
	\begin{align}\label{pup1n}
		\mathbf  P_{\mathbf  A }^{\bot}
		 & =
		\begin{bmatrix}
			\mathbf I_{k} -\frac 1k \mathbf J_{k} & \mathbf O_{k \times (n-k)}                           \\
			\mathbf O_{(n-k) \times k }           & \mathbf I_{(n-k)} - \frac {1}{n-k} \mathbf J_{(n-k)}
		\end{bmatrix},
	\end{align}
	which is a block diagonal matrix.

	In a similar way,
	by  considering  the  structure of   $\mathbf u$ as  shown in  (\ref{u_classifyodd})   in Lemma \ref{Theorem_structureofall},
	$\mathbf  H(\mathbf u)$  can   also be  presented in  the  block  form,  which  follows:
	\begin{align}\label{Hblock}
		\mathbf  H(\mathbf u)
		=
		\begin{bmatrix}
			\sigma_{a} 	\mathbf I_{k}    & \mathbf O_{k \times (n-k)}   \\
			\mathbf O_{(n-k) \times k } & \sigma_{b} \mathbf I_{(n-k)}
		\end{bmatrix},
	\end{align}
	where, 
	based on (\ref{Hessianmatrix}),
	we denote
	$
		\sigma_{a} =  (m-1) a^{m-2}-\alpha,
		\sigma_{b}  =(m-1) b^{m-2}-\alpha $
	for  simplicity.
	Combining  (\ref{Mhess2}), (\ref{pup1n}) and  (\ref{Hblock})  yields
	\begin{align}\label{mblock}
		\mathbf {M}
		=
		\begin{bmatrix}
			\sigma_{a} 	(\mathbf I_{k} -\frac 1k \mathbf J_{k}) & \mathbf O_{k \times (n-k)}                                       \\
			\mathbf O_{(n-k) \times k }                        & \sigma_{b}	(\mathbf I_{(n-k)} - \frac {1}{n-k} \mathbf J_{(n-k)})
		\end{bmatrix} .
	\end{align}

	It can  be  verified that  $  \mathbf I_{k} -\frac 1k \mathbf J_{k}
		= \mathbf I_{k} -\frac 1k \mathbf 1_{k}  \mathbf 1_{k}^{\mathrm T} $  is  an   idempotent matrix, and  its $k$ eigenvalues are    0  and  1 ( with multiplicity $k-1$).
  A  similar   result holds  for  $ \mathbf I_{(n-k)} - \frac {1}{n-k} \mathbf J_{(n-k)}$.
	Clearly,  $\mathbf M $  is  the  direct  sum  of  two   matrices,
	and based on the  conclusion in  Lemma  \ref{direct_eig},
	$n$  eigenvalues  of  $\mathbf M$   are    
	\begin{equation}\label{eigensign}
		\underbrace{
			\sigma_{a}  ,  \sigma_{a},  \dots,   \sigma_{a} }_{k-1},
		\underbrace{
			\sigma_{b}  ,  \sigma_{b},  \dots,   \sigma_{b}  }_{n-k-1},
		\underbrace{
			0, 0}_{2}.
	\end{equation}

	Due to the  fact  that
	each element of  $ \mathbf H _{ii} $
	is  actually
	the gradient of
	$ u_{i}^{m-1} -\alpha u_{i} - \beta $,
	$ \sigma_{a}$ and $ \sigma_{b}$
	actually  reflect
  the increase or decrease behavior
	  at the  corresponding  root.
	When $m$ is odd,  as    can be  observed  from  Fig. \ref{odd},
	it holds that
	\begin{align}\label{eq.Heigenclss}
		\mathbf H _{ii}
		=
		[ (m-1) \operatorname{diag} (\mathbf u ^{\circledast^{m-2}})
		-\alpha \mathbf  I_{n}  ] _{ii}
		=
		\begin{cases}
			(m-1) a^{m-2}-\alpha = \sigma_{a} >0  ,  \quad  \quad    u_{i}=a  >0   \\
			(m-1) b^{m-2}-\alpha = \sigma_{b} <0  ,  \quad  \quad    u_{i}=b    <0 \\
		\end{cases}  .
	\end{align}

	(1): If and only if  $k=1$,  the  $n-2$  non-zero  eigenvalues  of  $\mathbf M$  are all     $\sigma_{b} < 0$, and thus  $\mathbf M$   is  negative  semi-definite.
	 In addition, there are exactly two zero eigenvalues,  indicating  that  the corresponding  direction  $\mathbf u $  is  	
	   a strict local maximum
		 based on the presentation 
	after   Theorem \ref{second_order_necessary} in  the 
  Appendix, which proves  
 the first claim.

	(2):
	When  $2 \le  k \le  \lfloor n/2  \rfloor $,  the  $n$  eigenvalues  of  $\mathbf M$
	  contain
	$k-1$   positive  eigenvalues     $\sigma_{a}$
	and
	$n- k-1$  negative    eigenvalues    $\sigma_{b}$,  
	indicating  that  the corresponding    $\mathbf u $  is   a   saddle  one.     $\blacksquare$
\end{proof}

\subsubsection{Proof of  Lemma   \ref{Theorem_structureoflocaleven1}}\label{proof.lemma.Theorem_structureoflocaleven1}
In this section, we first
focus on the tensor with  even   $m$,
whose solutions
are shown in
(\ref{u_classifyeven1}).
Observe that  the   solutions   in  (\ref{u_classifyeven1})  are  the same as those  in  (\ref{u_classifyodd}).
As can be seen from  the proof for    Lemma  \ref{Theorem_structureoflocal},
the same eigenvalues  for  $\mathbf M$ can be deduced for  (\ref{u_classifyeven1}).
However,
  the signs of $\sigma_a$ and $\sigma_b$ in (\ref{eigensign}) differ instead for even $m$ cases.
This is compared and clarified in the following remark:

\begin{wlremark}\label{rem.sign.even}
The  signs of $\sigma_a$ and $\sigma_b$ for different $m$  are   stated as follows.
	\begin{itemize}
		\item	When  $m$  is odd,
		      since $\alpha \ge  0 $  as stated in Lemma  \ref{rem.sign.alpha},
		      $  p(x) = x^{m-1} -\alpha x $ has  two real  roots $ x_1=0$, $ x_2  =
			      \sqrt[\uproot{3} {m-2}]
			      {
				      \alpha
			      }    > 0$ when $\alpha >0$, and has only one zero root when $\alpha =0$.
		      Then, 
		      $  f(x) = x^{m-1} -\alpha x - \beta = 0 $  with  $\beta >  0$
		     has   one positive and negative root,
		      indicating that    $\sigma_{a} > 0 $ and  $\sigma_{b} < 0$, as presented in (\ref{eq.Heigenclss}).
		\item 	For even $m$, 	      since $\alpha >  0$ as stated in Lemma  \ref{rem.sign.alpha},
		      $  p(x) = x^{m-1} -\alpha x = 0$ has   three  roots
		     as  stated in (\ref{equ.threeroot}).
		      When $\beta = 0$, this is the case;
		      when $\beta > 0$,
		      it  yields   two  negative roots and a positive one,
		      which     can be  observed from  Fig~\ref{even}.
		      Thus,
		      in  this case,  it only holds that  $\sigma_{a} > 0$ for $a>0$, while   the sign of  $\sigma_{b}$  for $b< 0$ may
		      vary  with  $k$.
	\end{itemize}
\end{wlremark}

With this remark, the proof is given as  follows:

\begin{proof}
	First,
	$\sigma_{b} $ as  a  function of  $k$  can be   expressed   as
	\begin{align}
		\sigma_{b}
		 & =
		(m-1)b^{m-2} - \alpha
		\nonumber \\
		 &
		=
		(m-1)( -\sqrt{	\frac{k} {(n-k) n}	})^{m-2}  -
		[ k ( \sqrt{	\frac{n-k} { kn}		})^{m} + (n-k) (-  \sqrt{	\frac{k} {(n-k) n}	})^{m} ]
		\nonumber \\
		 & =
		\frac
		{  (m-1)nk^{m-2} - (n-k)^{m-1}- k^{m-1}}
		{(n-k)^{r-1} k^{r-1}  n^{r}},
	\end{align}
	where  $ r = \frac {m} {2} 	\ge 2$.
	The  denominator  will always be positive   for $n \ge 3, m\ge 4,k \ge 1$, i.e.,
	$(n-k)^{r-1} k^{r-1}  n^{r} >0 $.
	We only focus on the sign of the numerator,  denoted  by
	\begin{align}
		l(m,n, k)
		:=
		(m-1)nk^{m-2} - (n-k)^{m-1}- k^{m-1}.
	\end{align}

	(1):  When $k=1$,
	it reduces to    $		l(m,n, 1) = (m-1)n - (n-1)^{m-1}-1 $  where  $n \ge 3$ and $m \ge 4$.
 Both $  (m-1)n $  and  $ (n-1)^{m-1} $ are  increasing   functions  with respect to  $m$ and $n$.
We first fix $n \ge 3$ and consider the difference
$
l(m+1,n,1) - l(m,n,1)
= n - (n-1)^{m-1}(n-2).
$
For $m \ge 4$, it holds that 
$
(n-1)^{m-1}(n-2) \ge (n-1)^3(n-2),
$
and $ (n-1)^3(n-2) > n $ for $n \ge 3$. 
Thus, for all $n \ge 3$ and $m \ge 4$,
$
l(m+1,n,1) - l(m,n,1) < 0.
$
So $l(m,n,1)$ is strictly decreasing in the range  $m \ge 4$. Hence,
$
l(m,n,1) \le l(4,n,1) = 3n - (n-1)^3 - 1.
$
Next, we fix $n \ge 3$ and consider
$
l(4,n+1,1) - l(4,n,1)
= 3 - \bigl(n^3 - (n-1)^3\bigr)
= 3 - (3n^2 - 3n + 1)
= 2 - 3n(n-1) < 0.
$
Therefore, $l(4,n,1)$ is strictly decreasing in the range $n \ge 3$, and so
$
l(4,n,1) \le l(4,3,1) = 3 \cdot 3 - (3-1)^3 - 1 = 9 - 8 - 1 = 0.
$
Combining the above inequalities yields
	\begin{align}
		l(m,n, 1)
	 \le 	l(4,3, 1) =0. 
	\end{align}
 If and only  if  $n = 3$ and $m = 4$,  the  equality holds.
	Since  we have excluded the special case $(m,n) = (4,3)$ 
	 from the scope of discussion (see    Lemma \ref{Theorem_structureofall}  for  details), 
	when $k=1$,
	it always holds that
	$	l(m,n,k)  <0 $,
$\sigma_{b} < 0$, and thus  $\mathbf M$   is  negative  semi-definite.
	 In addition, there are exactly two zero eigenvalues,  indicating  that  the corresponding  direction  $\mathbf u $  is  
	  a strict local maximum.

	(2): When $2 \le  k \le  \lfloor n/2  \rfloor $,
	there  are
	$k-1$ positive eigenvalues $\sigma_{a} $,
	and two $0$ eigenvalues  according to   (\ref{eigensign}),
	so the local
	optimality  is  determined by the sign of
	$l(m,n, k) $,
	which  varies  for   different combinations of  $(m,n)$.
If
$	l(m,n,k)  >0 $, 
	there  are
$n-2$ positive eigenvalues
and two zero   eigenvalues  for $\mathbf M$, which implies that   the  corresponding 
solution   is a strict local minimum, based on the presentation 
after   Theorem \ref{second_order_necessary} in  the 
Appendix; 
if $	l(m,n,k)  <0 $, it is a saddle point; 
 if $	l(m,n,k)  = 0 $, it is degenerate but is not a local maximum unless a separate higher-order analysis
is provided.
	Therefore, in this proof, we do not intend to
	exhaust all cases,  but conclude  as stated in the above lemma.
	The proof is complete.     $\blacksquare$
\end{proof}

\subsection{Proof of  Lemma   \ref{Theorem_structureoflocaleven2}}
\label{proof.lemma.Theorem_structureoflocaleven2}
\label{sec.odd_2}
In  this part,  we   analyze  the  signs  of    eigenvalues   of  $\mathbf M$     for    the  KKT points
in  (\ref{u_classifyeven2}) and aim to prove Lemma   \ref{Theorem_structureoflocaleven2},
which could be the most complex case we deal with.
We restate  the form
\begin{equation}\label{u3struc}
	\mathbf u
	=[
	c \mathbf 1_{p}^{\mathrm T},
	d \mathbf 1_{q}^{\mathrm T},
	e \mathbf 1_{s}^{\mathrm T}
	]^{\mathrm T},
\end{equation}
and  the following relationship holds:
\begin{equation}
\notag
 	e < d \le  0 <c .
\end{equation}
See (\ref{equ.threeroot}) and   Remark \ref{rem.sign.even} for details concerning the signs  of the three values.

With  this  assumption, the  sign  of  each  diagonal  element  in    $  \mathbf H:=\mathbf H (\mathbf u)$  can   be  determined,
which  can be  intuitively observed  from  Fig. \ref{even}  and  thus follows:
\begin{align}\label{eigenclss}
	\mathbf H _{ii}
	 & =
	[ (m-1) \operatorname{diag} (\mathbf u ^{\circledast^{m-2}})
	-\alpha \mathbf  I_{n}  ] _{ii}
	\nonumber
	\\
	 & =
	\begin{cases}
		(m-1) c^{m-2}-\alpha = \sigma_{c} >0  ,  \quad  \quad    u_{i}=c \\
		(m-1) d^{m-2}-\alpha = \sigma_{d}  <0,  \quad  \quad    u_{i}=d  \\
		(m-1) e^{m-2}-\alpha = \sigma_{e}  >0,  \quad  \quad    u_{i}=e  \\
	\end{cases}.
\end{align}
Then,    the  eigenvalues   of  $  \mathbf H $
are  sorted in   ascending  order  as   follows:
\begin{equation}\label{Heigenorder}
	\underbrace{
		\sigma_{d} = \dots =\sigma_{d} }_{q}
	<0
	<
	\underbrace{
		\sigma_{c} = \dots =\sigma_{c} }_{p}
	\le
	\underbrace{
		\sigma_{e} = \dots =\sigma_{e} }_{s} .
\end{equation}

Since $p \ge 1,q \ge 1,s \ge 1$
and $p +q +s =n$,
the  number  of    positive  eigenvalues   for  $
	\mathbf H
$
 satisfies
$ p+s \ge 2$.

\subsubsection{
		 Auxiliary lemmas}

Considering that there is no explicit form for $\mathbf u$ shown in   (\ref{u3struc}),
it is also  tough to deduce a benign block structure
for  $\mathbf M$
similar to
(\ref{eigensign}), as  analyzed in the above two cases.
To deal with this case,
 we first derive some  auxiliary lemmas
to facilitate  our discussion.

First,    $  \mathbf {M} $
is rewritten  as:
\begin{align}\label{matrix_Mfor3}
	\mathbf {M}
	 & =
	\mathbf  P_{\mathbf  A } ^{\bot}   \mathbf H (\mathbf u)  \mathbf  P_{\mathbf  A }^{\bot}
	=
	(\mathbf  I_{n}  -
	\mathbf u\mathbf u^{\mathrm T}
	- \frac  1n \mathbf 1_{n}\mathbf 1_{n}^{\mathrm T} )
	\mathbf H
	(\mathbf  I_{n}  -
	\mathbf u\mathbf u^{\mathrm T}
	- \frac  1n \mathbf 1_{n}\mathbf 1_{n}^{\mathrm T} )
		\nonumber \\
			& =	
	 		\mathbf H
		- 	(
		\mathbf u\mathbf u^{\mathrm T}
		+ \frac  1n \mathbf 1_{n}\mathbf 1_{n}^{\mathrm T} ) 	\mathbf H 
		-	\mathbf H  (
		\mathbf u\mathbf u^{\mathrm T}
		+ \frac  1n \mathbf 1_{n}\mathbf 1_{n}^{\mathrm T} ) 
		+(	\mathbf u\mathbf u^{\mathrm T}
		+ \frac  1n \mathbf 1_{n}\mathbf 1_{n}^{\mathrm T} )	\mathbf H  (
		\mathbf u\mathbf u^{\mathrm T}
		+ \frac  1n \mathbf 1_{n}\mathbf 1_{n}^{\mathrm T} ) .
\end{align}

Observe that   $\mathbf H$   is    a  diagonal  matrix,  and 
 it is easy to determine  its  eigenvalues  as  presented  in (\ref{Heigenorder}).
So,  the  following   task  is  to  determine     the   eigenvalues  of  the 
 remaining  part,
 which is denoted as   
	\begin{align}
	\label{equ.Hastdef}
	\mathbf H^{\ast} = 
(
\mathbf u\mathbf u^{\mathrm T}
+ \frac  1n \mathbf 1_{n}\mathbf 1_{n}^{\mathrm T} ) 	\mathbf H 
+	\mathbf H  (
\mathbf u\mathbf u^{\mathrm T}
+ \frac  1n \mathbf 1_{n}\mathbf 1_{n}^{\mathrm T} ) 
-  
(	\mathbf u\mathbf u^{\mathrm T}
+ \frac  1n \mathbf 1_{n}\mathbf 1_{n}^{\mathrm T} )	\mathbf H  (
\mathbf u\mathbf u^{\mathrm T}
+ \frac  1n \mathbf 1_{n}\mathbf 1_{n}^{\mathrm T} ) .
	\end{align}
Since both $\mathbf H$ and  $\mathbf H^{\ast} $ are symmetric, 
we then utilize the Weyl  theorem in Lemma \ref{weyltheo}
to determine the signs of  eigenvalues of   $ 	\mathbf M = \mathbf H - \mathbf H^{\ast} $,
which  	is then used to  check its  positive or negative  semi-definiteness.

We first conclude some  equations as follows, which will be useful to  facilitate our discussion.
 	By  (\ref{HU}), we have that  
$ \mathbf H 
\mathbf u
=
(m-2) \alpha  \mathbf u + (m-1) \beta \mathbf  1_{n}
$. 
It holds  that
\begin{equation}\label{HLN}
	\mathbf H
	\mathbf  1_{n}
	=
	(m-1)  \mathbf u ^{\circledast^{m-2}}-  \alpha  \mathbf  1_{n}.
\end{equation}
Furthermore, it  can be  derived  that
\begin{align}\label{UHU}
	\mathbf u^{\mathrm T}
	\mathbf H
	\mathbf u
	=
	(m-2) \alpha,
	\quad
	\mathbf  1_{n}^{\mathrm T}
	\mathbf H
	\mathbf  1_{n}
	=
	(m-1) \gamma
	-
	n  \alpha  ,
	\quad
	\mathbf u^{\mathrm T}
	\mathbf H
	\mathbf 1_{n}
	=
	n(m-1) \beta,
\end{align}
where
$ \gamma
	:=
	\mathbf 1_{n}^{\mathrm T}
	\mathbf u ^{\circledast^{m-2}}
	=
	\sum u_{i}^{m-2}
	>0
$
for  even  $m$  case, and in the last equation,
we use
that
$ \mathbf u^{\mathrm T}
	\mathbf u ^{\circledast^{m-2}}
	=
	n \beta
$.
 
	Then, it holds that 
	\begin{align}
\mathbf H  (
\mathbf u\mathbf u^{\mathrm T}
+ \frac  1n \mathbf 1_{n}\mathbf 1_{n}^{\mathrm T} )
=
	(m-2) \alpha \mathbf u\mathbf u^{\mathrm T} +(m-1) \beta \mathbf  1_{n} \mathbf u^{\mathrm T} 
	+ 
\frac  {m-1}  {n}  \mathbf u ^{\circledast^{m-2}} \mathbf 1_{n}^{\mathrm T} -   \frac  {\alpha }  {n} \mathbf  1_{n}\mathbf 1_{n}^{\mathrm T}.
	\end{align}
	Similarly, we have that 
		\begin{align}
 (
	\mathbf u\mathbf u^{\mathrm T}
	+ \frac  1n \mathbf 1_{n}\mathbf 1_{n}^{\mathrm T} ) 	\mathbf H  
	=
	(m-2) \alpha \mathbf u\mathbf u^{\mathrm T} +(m-1) \beta  \mathbf u  \mathbf  1_{n}^{\mathrm T} 
	+ 
\frac  {m-1}  {n} \mathbf 1_{n} (\mathbf u ^{\circledast^{m-2}})^{\mathrm T} -   \frac  {\alpha }  {n} \mathbf  1_{n}\mathbf 1_{n}^{\mathrm T}.
	\end{align}
	Using 
	$
	\mathbf u^{\mathrm T}  \mathbf u = 1,  \mathbf u^{\mathrm T}  \mathbf 1_n =  \mathbf 1_n^{\mathrm T}  \mathbf u =  0, \mathbf 1_n^{\mathrm T} \mathbf 1_n = n,
	$
	it holds that 
	\begin{align}
(\mathbf u \mathbf u^{\mathrm T}  + \frac{1}{n} \mathbf 1_n \mathbf 1_n^{\mathrm T} ) \mathbf H(\mathbf u \mathbf u^{\mathrm T}  + \frac{1}{n} \mathbf 1_n \mathbf 1_n^{\mathrm T} )
& = 
(m-2)\alpha \mathbf u \mathbf u^{\mathrm T} 
+(m-1) \beta   \mathbf 1_{n}  \mathbf u^{\mathrm T} 
\nonumber \\
& + (m-1)\beta \mathbf u \mathbf 1_n^{\mathrm T} 
+ \frac{ (m-1)\gamma }{n^2} \mathbf 1_n   \mathbf 1_n^{\mathrm T} 
- \frac{\alpha}{n} \mathbf 1_n \mathbf 1_n^{\mathrm T}  .
	\end{align}
	Combining those equations yields  that 
		\begin{align}
		\mathbf H^{\ast} 
		& =
	(m-2)\alpha \mathbf u \mathbf u^{\mathrm T} 
	-
	( \frac{ (m-1)\gamma }{n^2} + \frac{\alpha}{n} )\mathbf 1_n \mathbf 1_n^{\mathrm T} +
	 \frac  {m-1}  {n} ( \mathbf 1_{n} (\mathbf u ^{\circledast^{m-2}})^{\mathrm T}
	 +
	 (\mathbf u ^{\circledast^{m-2}})  \mathbf 1_{n}^{\mathrm T}
	 ).		
		\end{align}

	We have the following conclusion for eigenvalues of 	$	\mathbf H^{\ast} $:
\begin{wllemma}\label{Theorem_Gstructure}
For  the matrix
$	\mathbf H^{\ast} $ defined in (\ref{equ.Hastdef}),
where $\mathbf u$ is given by (\ref{u_classifyeven2}) 	   satisfying   $ \mathbf u^{\mathrm T}\mathbf 1_{n} =0 $ and $ \mathbf u^{\mathrm T}\mathbf u=1$,
the set of eigenvalues of $\mathbf{H}^{\ast}$ contains $n-3$ zero eigenvalues, one negative eigenvalue, and two positive nonzero  eigenvalues.
	\end{wllemma}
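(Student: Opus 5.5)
The plan is to exploit the explicit formula for $\mathbf H^{\ast}$ derived just above the statement. Write $\mathbf w:=\mathbf u^{\circledast^{m-2}}$ and $\kappa:=\frac{(m-1)\gamma}{n^2}+\frac{\alpha}{n}$. Then
\begin{align*}
\mathbf H^{\ast}=(m-2)\alpha\,\mathbf u\mathbf u^{\mathrm T}-\kappa\,\mathbf 1_n\mathbf 1_n^{\mathrm T}+\frac{m-1}{n}\bigl(\mathbf 1_n\mathbf w^{\mathrm T}+\mathbf w\mathbf 1_n^{\mathrm T}\bigr).
\end{align*}
Set $\mathbf B:=[\mathbf u,\mathbf 1_n,\mathbf w]\in\mathbb R^{n\times 3}$. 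This exhibits $\mathbf H^{\ast}$ as a congruence $\mathbf H^{\ast}=\mathbf B\mathbf C\mathbf B^{\mathrm T}$ with the symmetric matrix
\begin{align*}
\mathbf C=\begin{bmatrix}(m-2)\alpha&0&0\\0&-\kappa&\frac{m-1}{n}\\0&\frac{m-1}{n}&0\end{bmatrix}.
\end{align*}
The proof then reduces to two facts: (i) $\mathbf B$ has full column rank $3$; (ii) the inertia of $\mathbf C$ is two positive eigenvalues and one negative eigenvalue.

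\textbf{Step (ii).} I expect this to be immediate. Since $m$ is even, Lemma \ref{rem.sign.alpha} gives $\alpha>0$, and $m\ge 4$, so $(m-2)\alpha>0$. The lower $2\times 2$ block has determinant $-\bigl(\frac{m-1}{n}\bigr)^2<0$, so it has exactly one positive and one negative eigenvalue, whatever the value of $\kappa$. Since $\mathbf C$ is block diagonal, its inertia is $(2,1,0)$.

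\textbf{Step (i).} This is the step needing genuine care, because it is where the three-level structure (\ref{u_classifyeven2}) enters. On each of the three index blocks the vectors $\mathbf 1_n,\mathbf u,\mathbf w$ are constant, with values $(1,1,1)$, $(c,d,e)$ and $(c^{m-2},d^{m-2},e^{m-2})$ respectively. Hence linear independence of $\mathbf 1_n,\mathbf u,\mathbf w$ is equivalent to the three points $(c,c^{m-2})$, $(d,d^{m-2})$, $(e,e^{m-2})$ not being collinear. Equivalently, there must be no affine function $x\mapsto \xi+\eta x$ with $\xi+\eta x=x^{m-2}$ at three distinct points. Since $m-2\ge 2$ is even, $x^{m-2}$ is strictly convex on $\mathbb R$, so a line meets its graph in at most two points. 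Because $e<d<c$ are three distinct roots (guaranteed by the strict inequality on $\beta$ in Lemma \ref{Theorem_structureofall}, with $d=0$ allowed), independence follows. This covers the degenerate case $d=0$, $c=-e$ as well.

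\textbf{Concluding via Sylvester.} With $\mathbf B$ of full column rank, $\operatorname{range}(\mathbf H^{\ast})\subseteq\operatorname{range}(\mathbf B)$ and $\mathbf H^{\ast}$ vanishes on $\operatorname{null}(\mathbf B^{\mathrm T})$, which has dimension $n-3$. This gives the $n-3$ zero eigenvalues. The restriction of the quadratic form $\mathbf x^{\mathrm T}\mathbf H^{\ast}\mathbf x$ to $\operatorname{range}(\mathbf B)$, written as $\mathbf x=\mathbf B\mathbf y$, equals $\mathbf y^{\mathrm T}(\mathbf B^{\mathrm T}\mathbf B)\mathbf C(\mathbf B^{\mathrm T}\mathbf B)\mathbf y$. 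This is congruent to $\mathbf C$ via the invertible matrix $\mathbf B^{\mathrm T}\mathbf B$. By Sylvester's law of inertia, the nonzero eigenvalues of $\mathbf H^{\ast}$ therefore number exactly three, with two positive and one negative.
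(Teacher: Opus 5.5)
Your proof is correct, and it takes a different route from the paper. The paper changes to the orthonormal basis $\{\mathbf 1_n/\sqrt n,\mathbf u,\mathbf q_3,\dots,\mathbf q_n\}$, in which $\mathbf H^{\ast}$ becomes an arrowhead matrix $\mathbf G$. It expands $\det(\mathbf G-\varepsilon\mathbf I)$ to split off $n-3$ zero roots and a cubic $p(\varepsilon)$, then fixes the signs of the cubic's roots by Vieta. The product $-g_{22}g$ is negative, which needs $g>0$ (Lemma \ref{lemma.g.sign}). The sum $g_{11}+g_{22}$ is positive, which needs a separate estimate $\gamma=\alpha+\theta>\alpha$.

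You instead factor $\mathbf H^{\ast}=\mathbf B\mathbf C\mathbf B^{\mathrm T}$ through the non-orthogonal frame $[\mathbf u,\mathbf 1_n,\mathbf u^{\circledast^{m-2}}]$ and read the signature off a $3\times 3$ core. There $(m-2)\alpha>0$ gives one positive direction, and the hyperbolic term in $\mathbf 1_n$ and your $\mathbf w$ gives one of each sign whatever $\kappa$ is. The only substantive input left is full column rank of $\mathbf B$. This is exactly the content of Lemma \ref{lemma.g.sign} ($\mathbf u^{\circledast^{m-2}}\notin\operatorname{span}\{\mathbf 1_n,\mathbf u\}$), which you prove by strict convexity of $x^{m-2}$ rather than via $u_i^{m-1}=\alpha u_i+\beta$.

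What your route buys:
\begin{itemize}
\item It dispenses with the trace estimate and with $\gamma$ altogether, and it explains the sign pattern structurally.
\item Your convexity step is also slightly more complete. It excludes a horizontal line through the three points, i.e.\ $c_2=0$ in the paper's proof of Lemma \ref{lemma.g.sign}. That case contains the subcase $c_1=\alpha$, $\beta=0$, where the quadratic $c_2u^2+(c_1-\alpha)u-\beta$ vanishes identically and the ``at most two distinct roots'' step gives nothing. The paper does not address this subcase, though it is easily ruled out.
\end{itemize}

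What the paper's computation buys in return is the explicit cubic $p(\varepsilon)$. Its coefficients in $\alpha,\beta,\gamma,g$ carry quantitative information about the three nonzero eigenvalues beyond their signs, although only the signs are used later.
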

	\begin{proof}
 
			Let $\mathbf{Q} = [\mathbf{q}_1, \mathbf{q}_2, \ldots, \mathbf{q}_n] \in \mathbb{R}^{n \times n}$ be an orthogonal matrix satisfying
			\[
		 	\mathbf{q}_1 = \frac{\mathbf{1}_n}{\sqrt{n}},  \mathbf{q}_2 = \mathbf{u}, 
			\]
			and the remaining vectors $\mathbf{q}_3, \ldots, \mathbf{q}_n$ are arbitrary orthonormal vectors orthogonal to both $\mathbf{q}_1$ and $\mathbf{q}_2$.		
			Define
			\begin{align}
			\label{equ.rj}
				r_j := \mathbf{q}_j^T \mathbf{u}^{\circledast^{m-2}}, \qquad j = 3, 4, \ldots, n.
			\end{align}	
			Then,
			based on (\ref{HLN})  and (\ref{UHU}), the matrix $\mathbf{Q}^{\mathrm T} \mathbf{H}^{\ast} \mathbf{Q}$ has the following explicit form:
			\[
				\mathbf  G :=	\mathbf{Q}^{\mathrm T} \mathbf{H}^{\ast} \mathbf{Q}
				=
				\begin{bmatrix}
				g_{11} & g_{12} & g_{13}      & \dots  & g_{1n}      \\
				g_{21} & g_{22} & 0      & \dots  & 0      \\
				g_{31} & 0      & 0      & \dots  & 0      \\
				\vdots & \vdots & \vdots & \ddots & \vdots \\
				g_{n1} & 0      & 0      & \dots  & 0
				\end{bmatrix}
				=
				\begin{bmatrix}
				\dfrac{(m-1)\gamma}{n} - \alpha 
				& (m-1)\sqrt{n}\beta
				& \dfrac{m-1}{\sqrt{n}} r_3
				& \cdots
				& \dfrac{m-1}{\sqrt{n}} r_n \\[8pt]
				(m-1)\sqrt{n}\,\beta
				& (m-2)\alpha 
				& 0
				& \cdots
				& 0 \\[6pt]
				\dfrac{m-1}{\sqrt{n}} r_3
				& 0
				& 0
				& \cdots
				& 0 \\
				\vdots
				& \vdots
				& \vdots
				& \ddots
				& \vdots \\
				\dfrac{m-1}{\sqrt{n}} r_n
				& 0
				& 0
				& \cdots
				& 0
				\end{bmatrix},
			\]	
			where for simplicity, we represent its elements as: 
			$ g_{11} : =\dfrac{(m-1)\gamma}{n} - \alpha, 
		 g_{22} := (m-2)\alpha > 0,
			g_{12} = g_{21} : = \sqrt {n}(m-1)\beta  \ge 0 $  based on the discussion in Lemma  \ref{rem.sign.alpha}  for  even $m$  cases,
			$g_{1j} = g_{j1} := \frac{  (m-1) \mathbf q_{j}^{\mathrm T} \mathbf u ^{\circledast^{m-2}} }{ \sqrt{n}}
			=\frac{  (m-1) r_j }{ \sqrt{n}}  , j=3,4,\dots, n$.	
				With  this  form,  we then   analyze  the  signs  of   the  eigenvalues  of $\mathbf G$ (and equivalently those of $\mathbf H^{\ast}$
				up to an orthogonal transformation  $\mathbf Q$),
			which can  be  uniformly  transformed  into  finding     roots of  the  characteristic polynomial  as follows
			\begin{align}\label{charpoly}
			f(\varepsilon)
			&
			:=
			\det(\mathbf {G} -  \varepsilon \mathbf {I} )
			\nonumber \\
			& =
			(g_{22} - \varepsilon)
			(- \varepsilon)^{n-2}
			det(
			g_{11} - \varepsilon
			-
			\begin{bmatrix}
			g_{21} \\
			g_{31} \\
			\vdots \\
			g_{n1}
			\end{bmatrix}
			^{\mathrm T}
			\begin{bmatrix}
			g_{22} - \varepsilon & 0             & 0 & \dots & 0             \\
			0                    & - \varepsilon & 0 & \dots & 0             \\
			\vdots                                                           \\
			g_{n1}               & 0             & 0 & \dots & - \varepsilon
			\end{bmatrix}^{-1}
			\begin{bmatrix}
			g_{12} \\
				g_{13}      \\
			\vdots \\
				g_{1n}
			\end{bmatrix}
			)
			\nonumber \\
			& =
			(g_{22} - \varepsilon)
			(- \varepsilon)^{n-2}
			(
			g_{11} - \varepsilon
			-
			\frac{g_{12}^{2}}{g_{22} - \varepsilon}
			+
			\frac{g_{13}^{2}+\dots+ g_{1n}^{2}}{ \varepsilon}
			)
			\nonumber \\
			& =
		- 	(- \varepsilon)^{n-3}
			(
			\varepsilon^{3}
			-
			(g_{11} +g_{22}) \varepsilon^2
			+
			(g_{11} g_{22}
			-
			g_{12}^{2}
			-
			g_{13}^{2}-\dots- g_{1n}^{2}
			)  \varepsilon
			+
			g_{22} (g_{13}^{2}+\dots+ g_{1n}^{2})
			),
			\end{align}
			where
			in  the second equation,  we
			split  the matrix  into  two  $1 \times 1 $ and  $(n-1) \times (n-1)  $  blocks
			and  then utilize  (\ref{detblock}).

By $ f(\varepsilon) =0$, it necessarily holds that
		there are  $n-3$ zero   roots.
				For simplicity, denote $ g := 	g_{13}^{2}+g_{14}^{2}+\dots+ g_{1n}^{2}$, 
				and based on the conclusion stated in  Lemma \ref{lemma.g.sign}, it holds that $ g > 0$. 
			Then,
		the signs of the remaining three nonzero roots are  determined by
			\begin{align}\label{equ.pvarepsilon}
			p (\varepsilon)
			=
				\varepsilon^{3}
			-
			(g_{11} +g_{22}) \varepsilon^2
			+
			(g_{11} g_{22}
			-
			g_{12}^{2}
			-
		g
			)  \varepsilon
			+
			g_{22} g
			=0.
			\end{align}
Since $\mathbf  G$  is symmetric,
	its three nonzero eigenvalues are real; for simplicity, they are denoted by    $\varepsilon_1,\varepsilon_2, \varepsilon_3$.
			In the following, we tend to analyze the signs of the three nonzero roots.

		Since
			\begin{align}\label{gammaalpha}
			\gamma
			& =
			\sum\limits_{i=1}^{n}  u_{i}^{m-2}
			=
			(  \sum\limits_{i=1}^{n}  u_{i}^{m-2})
			(  \sum\limits_{i=1}^{n}  u_{i}^{2})
			=
			\sum\limits_{i=1}^{n}  u_{i}^{m}
			+
			\sum\limits_{i,j=1}^{n}  u_{i}^{m-2} u_{j}^{2}
			=
			\alpha + \theta,
			\end{align}
			where
			$  \theta >0$  for  even  $m$  cases,
			 since each term in $\theta$ is non-negative, and they cannot all be zero simultaneously due to the constraints on $\mathbf{u}$.
It necessarily implies that
			\begin{align}\label{trg}
			g_{11} +g_{22}
			=
			\dfrac{(m-1)\gamma}{n}  -\alpha +  (m-2)\alpha
			=
			\left( \frac{m-1}{n} + m - 3 \right)  \alpha +
			\frac {m-1}{n}  \theta
			> 0.
			\end{align}
			Similarly, it holds that 
		$		g_{22} g > 0$.
		By factorizing  $  p (\varepsilon) = (\varepsilon - \varepsilon_1)(\varepsilon - \varepsilon_2)(\varepsilon - \varepsilon_3)$
		and comparing with  (\ref{equ.pvarepsilon}),
		it holds that 
		\begin{align}\label{equ.rootsum}
	&	\varepsilon_1+\varepsilon_2+\varepsilon_3  = g_{11} +g_{22} > 0,
	\nonumber 	\\
& \varepsilon_1\varepsilon_2 +\varepsilon_2\varepsilon_3 +\varepsilon_1 \varepsilon_3 = g_{11} g_{22}
-
g_{12}^{2}
-
g,
\nonumber \\
& \varepsilon_1\varepsilon_2\varepsilon_3 = 	- g_{22}g < 0.
		\end{align}
The condition   $\varepsilon_1\varepsilon_2\varepsilon_3 < 0 $  implies 
	either one negative and two positive eigenvalues or three negative
eigenvalues, while 	$ \varepsilon_1+\varepsilon_2+\varepsilon_3   > 0 $  
rules out the latter possibility. 
Hence there must be exactly
one negative and two positive nonzero eigenvalues
for 	$p (\varepsilon) =0  $.

The proof is complete.  $\blacksquare$
\end{proof}

We then provide   an auxiliary lemma used in the above proof concerning the sign of $ g := 	g_{13}^{2}+g_{14}^{2}\dots+ g_{1n}^{2}$, which is concluded as  follows:
\begin{wllemma}\label{lemma.g.sign}
Let $n\ge 3, m\ge 3$ and $(m,n) \neq (4,3)$.	For  the term 
$ g := 	g_{13}^{2}+g_{14}^{2} + \dots+ g_{1n}^{2}$,
where 	$g_{1j}  := \frac{  (m-1) \mathbf q_{j}^{\mathrm T} \mathbf u ^{\circledast^{m-2}} }{ \sqrt{n}}$, 
$\mathbf q_{j}^{\mathrm T} \mathbf u =0$ for $j=3,4,\dots, n$,
and 
 $\mathbf u$ is given by (\ref{u_classifyeven2}) 
 satisfying   $ \mathbf u^{\mathrm T}\mathbf 1_{n} =0 $ and $ \mathbf u^{\mathrm T}\mathbf u=1$,
it holds that $ g >0$. 
\end{wllemma}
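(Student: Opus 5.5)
Since $\mathbf q_3,\dots,\mathbf q_n$ form an orthonormal basis of the orthogonal complement of $\operatorname{span}\{\mathbf 1_n,\mathbf u\}$, we have
\[
g=\frac{(m-1)^2}{n}\sum_{j=3}^{n} r_j^2=\frac{(m-1)^2}{n}\,\bigl\|\mathbf P_{\mathbf A}^{\bot}\mathbf u^{\circledast^{m-2}}\bigr\|^2 ,
\]
with $\mathbf P_{\mathbf A}^{\bot}$ as in (\ref{projecteddenote}). Hence $g\ge 0$, and $g=0$ holds exactly when $\mathbf u^{\circledast^{m-2}}\in\operatorname{span}\{\mathbf 1_n,\mathbf u\}$. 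The plan is to assume $\mathbf u^{\circledast^{m-2}}=x\mathbf 1_n+y\mathbf u$ for some reals $x,y$ and derive a contradiction from the three-valued structure (\ref{u_classifyeven2}).

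The key observation is componentwise. The relation $t^{m-2}=x+yt$ would have to hold at the three distinct values $t\in\{c,d,e\}$, all of which occur in $\mathbf u$ because $p,q,s\ge1$. The function $h(t)=t^{m-2}-yt-x$ has second derivative $(m-2)(m-3)t^{m-4}$. For even $m\ge 4$ this is $\ge 0$ everywhere and vanishes only at $t=0$ (for $m\ge6$) or identically when $m=4$. So for $m\ge 6$, $h$ is strictly convex and has at most two real zeros, which is a contradiction.

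The remaining case, and the main obstacle, is $m=4$, where $\mathbf u^{\circledast^{2}}$ involves a quadratic. There $h(t)=t^2-yt-x$ is quadratic and still has at most two roots, so three distinct values $c,d,e$ cannot all satisfy it. Thus in every even case the degree or convexity argument excludes three distinct roots, giving $g>0$.

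I would also double-check that the hypothesis $(m,n)\neq(4,3)$ is consistent with this. For $n=3$ we have $p=q=s=1$, so $\operatorname{span}\{\mathbf 1,\mathbf u\}^{\perp}$ is one-dimensional and $g=0$ would require $\mathbf u^{\circledast^{m-2}}$ to lie in a two-dimensional subspace. The argument above still rules this out, since it needs only three distinct entries, so no special treatment is needed beyond citing the exclusion. Finally, I would verify that $c,d,e$ really are distinct, which holds because Lemma \ref{Theorem_structureofall} requires the strict inequality in (\ref{equ.alpha.beta.sign}).
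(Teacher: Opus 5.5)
Your proof is correct, but you close the argument differently from the paper. Both reach the same reduction: $g=0$ iff $\mathbf u^{\circledast^{m-2}}\in\operatorname{span}\{\mathbf 1_n,\mathbf u\}$, i.e.\ $u_i^{m-2}=x+yu_i$ for all $i$.

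\textbf{The paper's route.} It multiplies this relation by $u_i$ and substitutes the KKT equation $u_i^{m-1}=\alpha u_i+\beta$ from (\ref{KKTgradient}). Each $u_i$ must then solve the quadratic $y u_i^2+(x-\alpha)u_i-\beta=0$, which has at most two roots. This works for any parity of $m$. It does, however, silently need that quadratic to be nontrivial. The case $y=0$, $x=\alpha$, $\beta=0$ must be excluded separately; this is easy, since $y=0$ would make $u_i^{m-2}$ constant, which is impossible for three distinct values.

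\textbf{Your route.} You never use the KKT equation, only the three-valued structure and the fact that $m$ is even, which is exactly when (\ref{u_classifyeven2}) occurs. Then $h(t)=t^{m-2}-yt-x$ is strictly convex and cannot vanish at three points, and no degenerate case arises. Your identity $g=\frac{(m-1)^2}{n}\|\mathbf P_{\mathbf A}^{\bot}\mathbf u^{\circledast^{m-2}}\|^2$ also justifies the reduction more cleanly than the paper does. It shows as well that $g$ does not depend on the choice of $\mathbf q_3,\dots,\mathbf q_n$.

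\textbf{One slip.} For $m=4$ you have $h''(t)=(m-2)(m-3)t^{0}=2$, not identically zero. So $m=4$ is no obstacle: the same strict-convexity argument covers every even $m\ge 4$. Your separate quadratic argument for $m=4$ is also fine.
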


\begin{proof}
 
We prove this claim by contradiction. 
Observe that $g := g_{13}^2 + g_{14}^2 + \dots + g_{1n}^2 \ge 0$ by its definition. 
We tend to show that the case $g = 0$ is impossible in the following.
Assume that $g = 0$ holds. Then we have $r_j = 0$ for all $j = 3, 4, \ldots, n$, where $ r_j := \mathbf{q}_j^{\mathrm{T}}
 \mathbf{u}^{\circledast^{m-2}}. $
Since the vectors \( \mathbf{q}_3, \ldots, \mathbf{q}_n \) together with \( \mathbf{q}_1 = \frac{\mathbf{1}_n}{\sqrt{n}} \) and \( \mathbf{q}_2 = \mathbf{u} \) form an orthonormal basis of \( \mathbb{R}^n \), the condition \( r_j = 0 \) for all \( j \ge 3 \) implies that \( \mathbf{u}^{\circledast^{m-2}} \) is orthogonal to \( \mathbf{q}_3, \ldots, \mathbf{q}_n \). Therefore, \( \mathbf{u}^{\circledast^{m-2}} \) lies in the subspace spanned by \( \mathbf{q}_1 \) and \( \mathbf{q}_2 \), i.e.,
\[
\mathbf{u}^{\circledast^{m-2}} \in \operatorname{span}\{\mathbf{1}_n, \mathbf{u}\}.
\]
Equivalently, there exist constants \( c_1, c_2 \in \mathbb{R} \) such that
\[
\mathbf{u}^{\circledast^{m-2}} = c_1 \mathbf{1}_n + c_2 \mathbf{u},
\]
which is equivalent to 
\[
{u}_i^{ m-2} = c_1  + c_2 u_i, i =1,2\dots, n.
\]
Based on the KKT condition  in  (\ref{KKTgradient}), it holds that 
$ {u}_i^{ m-1} = \alpha  u_i +\beta, i =1,2\dots, n$. 
This implies that 
$ \alpha  u_i +\beta =  {u}_i^{ m-1} = {u}_i^{ m-2}u_i  = c_1u_i  + c_2 u_i^2$. 
Therefore, 
 ${u}_i$ is a  solution of $ c_2 u_i^2 + ( c_1- \alpha )u_i  -\beta  = 0$.
Clearly, there are at most two distinct real roots, which contradicts the fact that $\mathbf{u}$ has three distinct 
values given by (\ref{u_classifyeven2}).
 Thus, the assumption does not hold, and we prove that  $ g:= g_{13}^2 + \cdots + g_{1n}^2 >  0$. 
 $\blacksquare$
\end{proof}

\subsubsection{Proof}

 So far, we have clearly known the eigenvalues  distribution concerning the two divided parts of $\mathbf{M}$, i.e., $\mathbf{H}$ (as listed in (\ref{Heigenorder})) and $\mathbf{H}^{\ast}$ (as concluded in Lemma \ref{Theorem_Gstructure}).
Recalling that $p + s \ge 2$, Lemma \ref{Theorem_structureoflocaleven2} can be proved by considering the following two cases:

\begin{itemize}
	\item  
		When $p + s \ge 3$, for any even $m$, there are at least three positive eigenvalues of $\mathbf{H}$. 
		Assume that  the $n$   eigenvalues are  sorted  
		in    ascending  order,  we have that  $\lambda_{n-2} (\mathbf{H}) >0$. 
		For $-\mathbf{H}^{\ast}$, based on the conclusion in Lemma \ref{Theorem_Gstructure} with an extra sign-flipping operation, there are two negative eigenvalues, and   $\lambda_{3} (-\mathbf{H}^{\ast}) =0$. 
		Based on Weyl's theorem in Lemma \ref{weyltheo}, 
		it yields that $ \lambda_{n} (\mathbf{M}) \ge  \lambda_{n-2} (\mathbf{H}) + \lambda_{3} (-\mathbf{H}^{\ast}) > 0$, 
	and 	there is at least one positive eigenvalue of $\mathbf{M} = \mathbf{H} - \mathbf{H}^{\ast}$, which implies that the corresponding solution cannot be locally maximized.

	\item   
		When $p + s = 2$ ($p = 1$, $s = 1$, $q = n-2$), there are exactly two positive eigenvalues of $\mathbf{H}$ and two negative eigenvalues of $-\mathbf{H}^{\ast}$.
		Therefore, by only using the conclusion in Lemma \ref{Theorem_Gstructure}, we cannot conclude that there are definitely positive eigenvalues for  $\mathbf{M}$ based on Weyl's theorem in Lemma \ref{weyltheo}.
		We adopt a new strategy to deal with this case.
		Based on  the conclusion in Lemma  \ref{Theorem_structureofall},
		we can derive a explicit form for $\mathbf M$ in this case.
		First, for  $m=4$, 
		a simple calculation by
	$	(u_{i}-c)
		(u_{i}-d)
		(u_{i}-e)
		=
		u_{i}^{3} -\alpha u_{i} - \beta
	$ 
		deduces that
		$ c+d+e=0$.
		Then, when  $p = 1$, $s = 1$, and $q = n-2$, 
	combining this with 	$	pc^{2}+q d^{2} +s e^{2} =1 $ and $pc+q d +s e =0 $    
	yields that $d = 0$, and $c = -e = \frac{\sqrt{2}}{2}$. 
	We then consider the case with even $m \ge 6$ in general. 
Based on the analysis in the  later  Lemma  \ref{lem.ps1},
	when $p=s=1$,  the same conclusion can be derived. 
		Consequently, 
		$\sigma_d = -\alpha$, $\alpha = c^m + (-e)^m = 2c^m$, and $\sigma_c = \sigma_e = (m-1)c^{m-2} - \alpha = 2(m-1)c^m - \alpha = (m-2)\alpha$.
		Without loss of generality and for a convenient discussion, 
		we set  $\mathbf u$ as follows:
		\begin{equation}\label{u3strucnew}
		\mathbf{u} = \left[ \frac{\sqrt{2}}{2},\ -\frac{\sqrt{2}}{2},\ \mathbf{0}_{n-2}^{\mathrm{T}} \right]^{\mathrm{T}},
		\end{equation}
		which is a   permutation-equivalence  result of   (\ref{u_classifyeven2}) for $p = 1$, $s = 1$, and $q = n-2$.  
		$\mathbf{u}\mathbf{u}^{\mathrm{T}}$ can be expressed in the following block form:
		\begin{align}\label{publock_new}
		\mathbf{u}\mathbf{u}^{\mathrm{T}}
		=
		\begin{bmatrix}
		\frac{1}{2}    &   - \frac{1}{2}      & \mathbf{O}_{1 \times (n-2)} \\
		-\frac{1}{2}    &    \frac{1}{2}      & \mathbf{O}_{1 \times (n-2)} \\
		\mathbf{O}_{(n-2) \times 1} & \mathbf{O}_{(n-2) \times 1}  &  \mathbf{O}_{n-2}
		\end{bmatrix}.
		\end{align}
		With some calculations, it holds that 
		\begin{align}\label{pup1nNEW}
		\mathbf{P}_{\mathbf{A}}^{\perp}
		& =
		\begin{bmatrix}
		\left(\frac{1}{2} - \frac{1}{n}\right) \mathbf{J}_2 & \quad - \frac{1}{n} \mathbf{J}_{2 \times (n-2)} \\
		- \frac{1}{n} \mathbf{J}_{(n-2) \times 2}           &\quad  \mathbf{I}_{n-2} - \frac{1}{n} \mathbf{J}_{n-2}
		\end{bmatrix},
		\end{align}
		and $\mathbf{H}$ has the form 
		\begin{align}\label{Hblocknew}
		\mathbf{H}
		=
		\begin{bmatrix}
		(m-2)\alpha  	\mathbf{I}_2    & \quad  \mathbf{O}_{2 \times (n-2)}   \\
		\mathbf{O}_{(n-2) \times 2} & \quad - \alpha \mathbf{I}_{n-2}
		\end{bmatrix}.
		\end{align}
		Combining those results leads to 
		\begin{align}
		\mathbf{M} =
		\alpha
		\begin{bmatrix}
		\left[ 2(m-2)\left(\frac{1}{2} - \frac{1}{n}\right)^2 - \frac{n-2}{n^2} \right]\mathbf{J}_2
		&
		-2\left[ \frac{m-2}{n}\left(\frac{1}{2} - \frac{1}{n}\right) - \frac{1}{n^2} \right]\mathbf{J}_{2\times(n-2)} \\[1.2em]
		-2\left[ \frac{m-2}{n}\left(\frac{1}{2} - \frac{1}{n}\right) - \frac{1}{n^2} \right]\mathbf{J}_{(n-2)\times 2}
		&
		-\mathbf{I}_{n-2} + \frac{2m+n-2}{n^2}\mathbf{J}_{n-2}
		\end{bmatrix}.
		\end{align}
		Furthermore, we decompose $\frac{1}{\alpha}\mathbf{M}$ into two parts $\mathbf{A} + \mathbf{B}$, where 
		\begin{align}
		\mathbf{A}=
		\begin{bmatrix}
		\left[ 2(m-2)\left(\frac{1}{2} - \frac{1}{n}\right)^2 - \frac{n-2}{n^2} \right]\mathbf{J}_2
		& \mathbf{O}_{2 \times (n-2)}
		\\[1.2em]
		\mathbf{O}_{(n-2) \times 2} &
		-\mathbf{I}_{n-2} + \frac{2m+n-2}{n^2}\mathbf{J}_{n-2}
		\end{bmatrix},
		\end{align}
		and 
		\begin{align}
		\mathbf{B}=
		\begin{bmatrix}
		\mathbf{O}_2
		&
		-2\left[ \frac{m-2}{n}\left(\frac{1}{2} - \frac{1}{n}\right) - \frac{1}{n^2} \right]\mathbf{J}_{2\times(n-2)} \\[1.2em]
		-2\left[ \frac{m-2}{n}\left(\frac{1}{2} - \frac{1}{n}\right) - \frac{1}{n^2} \right]\mathbf{J}_{(n-2)\times 2}
		&
		\mathbf{O}_{n-2}
		\end{bmatrix}.
		\end{align}
		Clearly, $\mathbf{A}$ is a block diagonal matrix, and $\mathbf{B}$ also has a special structure. 
		For the rank-one matrix $\mathbf{J}_{p_1 \times p_2} = \mathbf{1}_{p_1} \mathbf{1}_{p_2}^{\mathrm{T}}$, it is easy to verify that its nonzero singular value is equal to $\sqrt{p_1 p_2}$.
		When $p_1 = p_2 = p$, one of its eigenvalues is $p$, and the remaining eigenvalues are $0$.
		For a matrix of the form $-\mathbf{I}_p + c_1\mathbf{J}_p$, where $c_1$ is a constant, it can be verified that $\mathbf{1}_p$ is an eigenvector with eigenvalue $c_1p - 1$, and the remaining $p-1$ eigenvectors are orthogonal to $\mathbf{1}_p$ with eigenvalues all equal to $-1$. 
		Using the conclusions in Lemma \ref{direct_eig} and Lemma \ref{lemma.svdblock}, the eigenvalues of $\mathbf{A}$ and $\mathbf{B}$ are:
		\begin{align}
		\underbrace{-1}_{n-3},\ 0,\ \frac{2(m-2)(n-2)-4}{n^2},\ \frac{(n-2)[(m-2)(n-2) - 2]}{n^2},
		\end{align}
		and 
		\begin{align}
		-2\sqrt{2(n-2)}\,\omega,\ \underbrace{0}_{n-2},\ 2\sqrt{2(n-2)}\,\omega, 
		\end{align}
		respectively, 
		where $\omega := \left| \frac{m-2}{n}\left(\frac{1}{2} - \frac{1}{n}\right) - \frac{1}{n^2} \right|$.
		Since $m \ge 4$, $n \ge 3$, and $(m,n) \neq (4,3)$, it holds that
		$2(m-2)(n-2) - 4 > 2 \times 2 - 4 = 0$. 
		Similarly, $(n-2)[(m-2)(n-2) - 2] > 0$.  
		It can be seen that there are two positive eigenvalues for  $\mathbf{A}$ and one negative eigenvalue for $\mathbf{B}$.
		Based on Weyl's theorem stated in Lemma \ref{weyltheo}, there is at least one positive eigenvalue for  $\mathbf{M} = \alpha(\mathbf{A} + \mathbf{B})$, since $\alpha > 0$ for even $m$.
		This implies that the corresponding solution cannot be locally maximized. 
\end{itemize} 
Combining the two cases completes the proof of Lemma \ref{Theorem_structureoflocaleven2}. 
$\blacksquare$

	  We then provide   an auxiliary lemma used in the above proof concerning the case  of $ p=s=1,q=n-2 	$ with even $m \ge 6$, which is concluded as  follows:
\begin{wllemma}\label{lem.ps1}
	Concerning  the solutions with the form of   (\ref{u_classifyeven2}) as stated in  Lemma \ref{Theorem_structureofall}, 
	for even $m \ge 6$, 
	if \(p=s=1\), it necessarily implies that 
	\[
	d=0,\qquad \beta=0,\qquad c=-e=\frac{\sqrt{2}}{2}.
	\]
\end{wllemma}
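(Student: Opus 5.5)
The plan is to eliminate $\alpha$ and $\beta$ from the three root equations, which leaves one homogeneous polynomial relation among $c,d,e$. I then intersect that relation with the linear constraint coming from $p=s=1$, $q=n-2$. Since $c,d,e$ are three distinct roots of $x^{m-1}-\alpha x-\beta$, subtracting the equations pairwise and dividing by the (nonzero) differences gives
\begin{equation*}
\alpha=\frac{c^{m-1}-d^{m-1}}{c-d}=\frac{c^{m-1}-e^{m-1}}{c-e}=\frac{d^{m-1}-e^{m-1}}{d-e}.
\end{equation*}
Equivalently, the second divided difference of $x^{m-1}$ at $c,d,e$ vanishes:
\begin{equation*}
h_{m-3}(c,d,e)=0,
\end{equation*}
where $h_k$ is the complete homogeneous symmetric polynomial of degree $k$. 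For $m=4$ this is exactly the relation $c+d+e=0$ used above, so the present lemma is its analogue for even $m\ge 6$, where $k=m-3\ge 3$ is odd. The constraints with $p=s=1$, $q=n-2$ read
\begin{equation*}
c+(n-2)d+e=0,\qquad c^2+(n-2)d^2+e^2=1.
\end{equation*}
Once $d=0$ is shown, everything else follows quickly. First $e=-c$, and the quadratic constraint then gives $c=\frac{\sqrt2}{2}$. Finally $n\beta=c^{m-1}+e^{m-1}=0$ because $m-1$ is odd, which is consistent with $d=0$ being a root of $x^{m-1}-\alpha x-\beta$ only when $\beta=0$.

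The first key tool is the classical fact (Hunter's theorem) that $h_{2j}$ is positive definite on $\mathbb R^N$. Together with $\partial h_k/\partial x_i = h_{k-1}$ evaluated with $x_i$ repeated, this shows that $h_k$ with $k$ odd is strictly increasing in each variable. As a sanity check, for odd $k$ one has $h_k(c,0,-c)=\sum_i c^i(-c)^{k-i}=0$, so $(c,0,-c)$ indeed lies on the surface.

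Next, suppose $d<0$ and argue by contradiction. Substitute $e=-c-(n-2)d$ and study
\begin{equation*}
F(d):=h_k\bigl(c,\,d,\,-c-(n-2)d\bigr)
\end{equation*}
on the admissible range, which is cut out by $e<d<0<c$, i.e. $-c/(n-1)<d<0$. By the chain rule,
\begin{equation*}
F'(d)=h_{k-1}(c,d,d,e)-(n-2)\,h_{k-1}(c,d,e,e).
\end{equation*}
I would try to show that $F$ has no zero on $(-c/(n-1),0)$ apart from $d=0$. There are two candidate routes:
\begin{itemize}
\item a sign argument, comparing $F(d)$ with $F(0)=h_k(c,0,-c)=0$;
\item an explicit expansion: write $h_k(c,d,e)=\sum_j d^j h_{k-j}(c,e)$, set $c+e=-(n-2)d=\tau>0$, and use that $h_{k-j}(c,-c+\tau)$ has a definite sign controlled by $\tau$ and the parity of $k-j$.
\end{itemize}
I would also bring in the extra information that $n\beta=c^{m-1}+(n-2)d^{m-1}+e^{m-1}\ge 0$, and the identity $f(x)+f(-x)=-2\beta$ for $f(x)=x^{m-1}-\alpha x-\beta$ (so $f(-c)=-2\beta$), to pin the relative size of $c$ and $|e|$.

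The main obstacle is this last step. Decreasing $d$ lowers $h_k$, but the induced increase of $e$ above $-c$ raises it. Monotonicity alone therefore does not settle the sign, and a quantitative comparison using the weight $n-2$ is needed. My first attempt would be to prove $F'(d)>0$ on the admissible interval by bounding $(n-2)h_{k-1}(c,d,e,e)$ against $h_{k-1}(c,d,d,e)$. If that fails, the fallback is the expansion in powers of $d$ with the signs of $h_{k-j}(c,-c+\tau)$.
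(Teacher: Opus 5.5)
Your elimination is correct and is essentially the one the paper uses. Since $c\neq e$, the condition $h_{m-3}(c,d,e)=0$ is equivalent to $\frac{c^{m-1}-d^{m-1}}{c-d}=\frac{e^{m-1}-d^{m-1}}{e-d}$. That is exactly the paper's relation $c^{m-1}+t_1^{m-1}=\alpha(c+t_1)$, with $\alpha$ taken from the $d,e$ equations. Your derivation of $\beta=0$ and $c=-e=\frac{\sqrt2}{2}$ once $d=0$ is known is also fine.

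\textbf{The gap.} The only substantive step is to rule out a zero of $F(d)=h_{m-3}\bigl(c,d,-c-(n-2)d\bigr)$ on $(-c/(n-1),0)$. You never carry this out, and your preferred route cannot work. Using $h_{m-4}(c,-c)=c^{m-4}$ and $h_{m-4}(c,-c,-c)=\frac{m-2}{2}c^{m-4}$, you get
\[
F'(0)=c^{m-4}\Bigl(1-(n-2)\frac{m-2}{2}\Bigr)<0 .
\]
For instance, for $n=3$, $m=6$ one has $F(d)=cd(-c-d)$, which is positive and decreasing on $(-c/2,0)$. So the true statement is $F>0$ on the whole interval, and $F'>0$ is false.

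Your fallback expansion $\sum_j d^j h_{m-3-j}(c,e)$ is not decisive either. Because $c>|e|$, every $h_{m-3-j}(c,e)$ is positive, so the terms just alternate in sign and a quantitative estimate is still needed. The extra facts $n\beta\ge 0$ and $f(-c)=-2\beta$ do not help, since $c=(n-2)|d|+|e|>|e|$ already follows from the linear constraint.

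\textbf{The missing idea.} What you need is a homogeneous normalization followed by a coefficientwise comparison. Put $d=-1$, $e=-r$ with $r>1$, and $c=q+r$ where $q=n-2$. Then
\[
h_{m-3}(q+r,-1,-r)=\frac{L(r)-R(r)}{(q+2r)(q+r+1)},\qquad L(r)=(r+q)^{m-1}+1,\qquad R(r)=(r+q+1)\sum_{i=0}^{m-2}r^i .
\]
The coefficients of $L-R$ are as follows:
\begin{itemize}
\item at $r^{m-1}$: $0$;
\item at $r^{m-2}$: $(m-2)q-2>0$;
\item at $r^j$ for $1\le j\le m-3$: $\binom{m-1}{j}q^{m-1-j}-(q+2)\ge (m-1)q^2-q-2>0$;
\item constant term: $q^{m-1}-q\ge 0$.
\end{itemize}
Hence $L(r)>R(r)$ for all $r>0$. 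Since $h_{m-3}(c,d,e)=|d|^{m-3}h_{m-3}(q+r,-1,-r)$ with $r=e/d>1$, this gives $F(d)>0$ for every admissible $d<0$, contradicting $F(d)=0$. This is precisely the paper's argument. It writes the relation directly as $(r+q)^{m-1}+1=(r+q+1)\sum_{i=0}^{m-2}r^i$ after eliminating $\alpha$ and $\beta$, then compares coefficients in the same way.
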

\begin{proof}
We prove this claim by contradiction.  
Observe that   $e < d	  \le   0 <c$  as stated in  Lemma \ref{Theorem_structureofall}. 
We tend to show that the case $d <  0$ is impossible in the following 
	for   \(p=s=1\) and \(q=n-2\ge 1\) where  $n \ge 3$ and even $m \ge 6$.  
	In this case, 
	(\ref{u_classifyeven2})   reads
	\[
	\mathbf u=[c,d,\ldots,d,e]^{\mathrm T},
	\]
	where \(d\) is repeated \(q\) times, and the two basic  constraints are
	$
	c+qd+e=0,  c^2+qd^2+e^2=1.
	$
	Moreover, \(c,d,e\) are the three real roots of
	$
	x^{m-1}-\alpha x-\beta=0,
	$
	with  even \(m\ge 6\).	
	Suppose, for contradiction, that \(d<0\). Since \(e<d  <  0\), denote 
	\[
	d=-t_1,\qquad e=-t_2,\qquad 0<t_1<t_2.
	\]
	The constraint \(c+qd+e=0\) gives
	$
	c=qt_1+t_2.
$
	Substituting \(d=-t_1\) and \(e=-t_2\) into the root equation, and using that \(m-1\) is odd, we get
	$
	(-t_1)^{m-1}=-t_1^{m-1},  (-t_2)^{m-1}=-t_2^{m-1}.
	$
	Hence
	\begin{align}
	\alpha t_1-\beta=t_1^{m-1},\qquad \alpha t_2-\beta=t_2^{m-1}.
		\end{align}
	Subtracting the two equations yields
	$
	\alpha(t_2-t_1)=t_2^{m-1}-t_1^{m-1}.
	$
Then, it holds that 
	\begin{align}\label{equ.alphares}
	\alpha=\frac{t_2^{m-1}-t_1^{m-1}}{t_2-t_1}
	=
	t_1^{m-2}  \sum_{i=0}^{m-2}(\frac{t_2}{t_1})^i.
		\end{align}
In addition, 
	$
	\beta=\alpha t_1-t_1^{m-1}>0.
	$
	Now use the root equation for \(c=qt_1+t_2\),
	it holds that 
$
	c^{m-1}=\alpha c+\beta=\alpha c+\alpha t_1-t_1^{m-1}=\alpha(c+t_1)-t_1^{m-1},
	$
which leads to 
	$
	c^{m-1}+t_1^{m-1}=\alpha(c+t_1).
	$
	Substituting \(c=qt_1+t_2\), we obtain
	\begin{align}\label{equ.t2qt1}
	(t_2+qt_1)^{m-1}+t_1^{m-1}
	=\alpha\bigl(t_2+(q+1)t_1\bigr).
	\end{align}
	Let \(t_2=rt_1\) with \(r>1\) based on the setting $0<t_1<t_2$. Then, 
	(\ref{equ.t2qt1}) can be rewritten as 
	\begin{align}\label{equ.maincompare}
	(r+q)^{m-1}+1
	=(r+q+1)\sum_{i=0}^{m-2}r^i,
	\end{align}
	where (\ref{equ.alphares}) is utilized. 
	\\
We now regard both sides of \eqref{equ.maincompare} as polynomials in the real variable $r$.
Define the polynomial $L(r) :=(r+q)^{m-1}+1$ and $R(r) :=(r+q+1)\sum_{i=0}^{m-2}r^i$.
Expanding them, the left-hand side reads
\[
L(r)
=r^{m-1}+(m-1)q\,r^{m-2}+\binom{m-1}{2}q^2r^{m-3}
+\cdots + (m-1)q^{m-2}r+q^{m-1}+1,
\]
and the right-hand side becomes
\[
R(r)
=r^{m-1}+(q+2)r^{m-2}+(q+2)r^{m-3}
+\cdots+(q+2)r+(q+1).
\]
For the coefficient of $r^{m-2}$, $L(r)$ has coefficient $(m-1)q$, while $R(r)$ has coefficient $q+2$. Since $m\ge 6$ and $q\ge 1$,
\[
(m-1)q-(q+2)=(m-2)q-2\ge 4q-2>0.
\]
For every other monomial $r^k$ with $1\le k\le m-3$, the coefficient of $r^k$ in $L(r)$ is $\binom{m-1}{k}q^{m-1-k}$, whereas the corresponding coefficient in $R(r)$ is always $q+2$.
For $ q \ge 1$ and $ m \ge 6$, we have
\[
\binom{m-1}{k}q^{m-1-k}
\ge  (m-1)q^{m-1-k}
\ge  (m-1)q^2
> q+2.
\]
The constant term of $L(r)$ is $q^{m-1}+1$, which is greater than or equal to the constant term $q+1$ of $R(r)$ for $ q\ge 1$ and even $m \ge 6$. 
All coefficients of $L(r)-R(r)$ are non-negative, and at least one coefficient is strictly positive. Consequently, $L(r)>R(r)$ for all $r>0$.
This means there is no real number $r>1$ satisfying equality \eqref{equ.maincompare}, yielding a contradiction.
 Therefore the assumption \(d<0\) is impossible, and we must have \(d=0\).
	\\
	With \(d=0\)  and $p=s=1$, the root equation evaluated at \(x=0\) gives
	\[
	0^{m-1}-\alpha\cdot 0-\beta=0\quad\Longrightarrow\quad \beta=0.
	\]
	The constraint \(c+qd+e=0\) then yields \(c+e=0\), i.e., \(c=-e\). Combining this with the equation  
	$
	c^2+e^2=1
$
	gives
	\[
	c=\frac{\sqrt{2}}{2},\qquad e=-\frac{\sqrt{2}}{2}.
	\]
	We also note that, with $d=0$ and $\beta=0$, the polynomial becomes $x^{m-1}-\alpha x = x(x^{m-2}-\alpha)$, whose roots are exactly $0,\pm \alpha^{1/(m-2)}$, consistent with the three real roots $c,d,e$.
	The proof is complete.  $\blacksquare$
\end{proof}

\section{Proof of   Theorem  \ref{conjecturesimplex}}\label{sec.Conjecture}
We provide a detailed proof for Theorem  \ref{conjecturesimplex}, 
which, for convenience,  is restated as follows:
\begin{theorem} 
	Let $\mathcal{S}_{\mathbf{W}}\in T^{m}(\mathbb{R}^{n-1})$ be a regular simplex tensor with $n\geq 3$, $m\geq 3$. Then,  the following statements hold:	
	\\
	(1):  If 	 $(m,n) = (3,3)$, $(3,4)$, or $(4,3)$, there are  no robust eigenpairs, either among 
	 the vectors in the regular simplex  frame or elsewhere.
	\\
	(2):  For all other $(m,n)$ combinations, the only robust eigenvectors, considered up to sign  equivalence, 
	are   	 the  vectors $\mathbf{w}_1,\ldots,\mathbf{w}_n$ in the  regular simplex  frame $\mathbf W$.
\end{theorem}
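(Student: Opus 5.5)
The plan is to pass through local optimality and never compute $\rho(\mathbf J)$ at a generic eigenpair. We only compute it at the frame vectors, where it has a closed form. Throughout, $(m,n)=(4,3)$ is set aside. For it, Remark \ref{RemarkScope} together with Theorem 16 of \cite{teneigenstructure} already gives that no eigenpair is robust. That settles this combination in claim (1), so from now on $(m,n)\neq(4,3)$.

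\textbf{Step 1 (robust implies strict local maximum).} Let $(\lambda,\mathbf v)$ be robust, with $\lambda>0$ as in Definition \ref{def.robust}. By Lemma \ref{RobustLocal}(4), $\mathbf K\preceq 0$. More precisely, the argument there gives $\lambda(\mu_i-1)\in(-2\lambda,0)$ for all $p-1$ eigenvalues orthogonal to $\mathbf v$. Hence $\mathbf K$ has exactly one zero eigenvalue and is strictly negative on $\mathbf v^\perp$. Lemma \ref{lem.LOCALMAX} then makes $\mathbf v$ a locally maximized eigenpair of (\ref{optmodelori}). Theorem \ref{Theorem.local_eigenpair} forces $\mathbf v=\mathbf w_j$ for some $j$, up to sign equivalence. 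So robust eigenvectors can only be frame vectors, and it remains to decide for which $(m,n)$ the frame vectors are robust.

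\textbf{Step 2 (Jacobian at $\mathbf w_j$).} Put $\tau:=-\frac{1}{n-1}$ and use $\mathbf W\mathbf W^{\mathrm T}=\frac{n}{n-1}\mathbf I_{n-1}$. Then
$$\mathcal S_{\mathbf W}\mathbf w_j^{m-2}=\sum_i(\mathbf w_i^{\mathrm T}\mathbf w_j)^{m-2}\mathbf w_i\mathbf w_i^{\mathrm T}=(1-\tau^{m-2})\mathbf w_j\mathbf w_j^{\mathrm T}+\tau^{m-2}\tfrac{n}{n-1}\mathbf I_{n-1},$$
and
$$\lambda=\mathcal S_{\mathbf W}\mathbf w_j^m=1+(n-1)\tau^m>0 .$$
By (\ref{Jacobianmatirx}), $\mathbf J$ annihilates $\mathbf w_j$ and acts on $\mathbf w_j^\perp$ as the scalar
$$\mu=\frac{(m-1)\,n\,\tau^{m-2}}{(n-1)\lambda}.$$
In absolute value,
$$|\mu|=\frac{(m-1)n}{(n-1)^{m-1}+(-1)^m}.$$
So robustness of $\mathbf w_j$ is equivalent to
$$(m-1)n<(n-1)^{m-1}+(-1)^m .$$

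\textbf{Step 3 (the inequality).} First check the two exceptional combinations directly. For $(3,3)$, $|\mu|=6/3=2$. For $(3,4)$, $|\mu|=8/8=1$. In both cases $\rho(\mathbf J)\ge 1$, so no frame vector is robust, and by Step 1 nothing else is either. This completes claim (1).

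For even $m\ge4$, the inequality is exactly $l(m,n,1)<0$. This was established in the proof of Lemma \ref{Theorem_structureoflocaleven1} for $(m,n)\neq(4,3)$, so it can be reused verbatim.

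For odd $m\ge3$ we need $(m-1)n<(n-1)^{m-1}-1$ whenever $(m,n)\notin\{(3,3),(3,4)\}$. I would argue by monotonicity, mirroring the proof for $l(m,n,1)$. Increasing $m$ by $2$ raises the right side by $(n-1)^{m-1}((n-1)^2-1)$, which exceeds the $2n$ gained on the left. Increasing $n$ by one raises the right side by $n^{m-1}-(n-1)^{m-1}\ge m-1$, the left side's gain. It therefore suffices to check three base cases:
\begin{itemize}
\item $(3,5)$: $10<15$;
\item $(5,3)$: $12<15$;
\item $(5,4)$: $16<80$.
\end{itemize}

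I expect the main obstacle to be this bookkeeping in the odd case. The two parameters must be increased in an order that stays inside the admissible region and never passes through $(3,3)$ or $(3,4)$, and the increment inequalities have to hold throughout. Once that is checked, Steps 1–3 give claim (2).
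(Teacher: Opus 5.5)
Your proposal is correct and follows the paper's route: robustness forces $\mathbf K$ to be negative definite on $\mathbf v^{\perp}$ (Lemma \ref{RobustLocal}), so $\mathbf v$ is a strict local maximum; Theorem \ref{Theorem.local_eigenpair} then reduces the candidates to the frame vectors; the closed-form spectral radius (\ref{radiuscase}) at $\mathbf w_j$ settles each $(m,n)$; and $(4,3)$ is handled by the cited external result. You are slightly tighter than the paper at two points it only asserts: you make explicit that $\rho(\mathbf J)<1$ leaves exactly one zero eigenvalue of $\mathbf K$, so Lemma \ref{lem.LOCALMAX} applies, and you actually verify the inequality behind ``$n+m\ge 8$'' (via $l(m,n,1)<0$ from Lemma \ref{Theorem_structureoflocaleven1} for even $m$, and a valid monotonicity argument from the base cases $(3,5),(5,3),(5,4)$ for odd $m$).
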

 
 \begin{proof}
  	Clearly, 			when   $(m,n) = (4,3)$ where the 
 	objective  value 
 	$
 	\mathcal S_{\mathbf W } \mathbf v^{m}
 	$   is a constant as  commented in Remark  \ref{RemarkScope},
 	then  	any unit-length vector is 
 	locally maximized and minimized.
 	In this case, 
 	   no robust eigenpairs exist.
    This case has also been analyzed and proved in  Theorem 16 of   \cite{teneigenstructure}.
    One can refer to it  for more details.
 
  For all other $(m,n)$ combinations with  $(m,n) \neq (4,3)$,
to identify all robust eigenpairs of   the    regular simplex tensor, we proceed as follows.
First, 
the sign  relation of equivalent eigenpairs as discussed in 
Lemma  \ref{lemma.sign} and the exclusion of   $\lambda = 0$ for   the   robustness issue
  guarantee
  that it suffices to focus only on eigenpairs with $\lambda > 0$.
Next, Lemma~\ref{RobustLocal} implies that among all   eigenpairs with $\lambda > 0$, only locally maximized eigenpairs can potentially be robust.
To identify the   locally maximized eigenpairs, we provide a detailed analysis in Section~\ref{sec.proof.opt}. This leads to Theorem~\ref{Theorem.local_eigenpair}, which states that,
 except for the special combination  $(m,n) = (4,3)$, the only locally maximized eigenpairs of   the
 regular simplex tensor are precisely the vectors in the  regular simplex  frame.
Consequently, the remaining task is merely to verify the robustness of
 the frame vectors 	$\mathbf{w}_{1}, \ldots, \mathbf{w}_{n}$ for $(m,n) \neq (4,3)$.

Concerning  the   vectors   in the regular simplex  frame
$
	\mathbf W =
	[
	\mathbf{w}_{1}, \ldots, \mathbf{w}_{n}
	] \in
	\mathbb{R}^{(n-1) \times  n}$,
it holds that
\begin{align}
	\mathbf{w}_{i}^{\mathrm T} \mathbf{w}_{i} =1,
	\mathbf{w}_{i}^{\mathrm T} \mathbf{w}_{j} =
	-\theta = -\frac{1}{n-1}
	( i \neq j),  \quad
	\mathbf{W} \mathbf{W}^{\mathrm T}=
	\sum_{i=1}^{n}
	\mathbf{w}_{i}
	\mathbf{w}_{i}^{\mathrm T}
	=
	\frac{n}{n-1} \mathbf{I}_{n-1}.
\end{align}
It can be calculated that  when
$ \mathbf v = \mathbf w_{j}$,
it holds that

\begin{align}\label{swj}
	\mathcal{S}_{\mathbf W}  \mathbf v^{m-2}
	 & =
	\mathcal{S}_{\mathbf W}   \mathbf w_{j}^{m-2}
	=
	(
	\sum_{i=1}^{n} \mathbf{w}_{i}^{\circ m}
	)  \mathbf w_{j}^{m-2}
	\nonumber \\
	 & =
	\sum_{i=1}^{n} (  \mathbf w_{j}^{\mathrm T} \mathbf{w}_{i})^{ m-2}
	\mathbf{w}_{i}
	\mathbf{w}_{i}^{\mathrm T}
	=
	\mathbf{w}_{j}
	\mathbf{w}_{j}^{\mathrm T}
	+
	\frac
	{  1}
	{ (1-n)^{ m-2}  }
	\sum_{i=1, i \neq j}^{n}
	\mathbf{w}_{i}
	\mathbf{w}_{i}^{\mathrm T}
	\nonumber \\
	 & =
	(
	1
	-
	\frac
	{  1}
	{ (1-n)^{ m-2}  }
	)
	\mathbf{w}_{j}
	\mathbf{w}_{j}^{\mathrm T}
	+
	\frac
	{ n}
	{ (1-n)^{ m-2} (n-1) }   \mathbf{I}_{n-1}
	.
\end{align}

In  addition,
the  corresponding  eigenvalue can be calculated by
\begin{equation}\label{lmdwj}
	\lambda
	=
	\mathcal{S}_{\mathbf W}   \mathbf  {w}_{j}^{m}
	=
	\sum_{i=1}^{n} ( \mathbf  {w}_{j}^{\mathrm T} \mathbf{w}_{i})^{ m}
	=
	1+
	\frac
	{  n-1}
	{ (1-n)^{ m}  }
	,
\end{equation}
	 which is always   greater    than 0  for $ n \ge 3, m\ge 3$, and $ (m,n) \neq  (4,3)$, satisfying 
 the condition that $ \lambda > 0$ for robustness checking.
Then,  according to  (\ref{Jacobianmatirx}),
the Jacobian matrix
at the eigenvector  $ \mathbf  {w}_{j} $
will satisfy
\begin{equation}
	\mathbf  J (\mathbf  {w}_{j},\lambda)
	\mathbf  {w}_{j}
	=
	\frac{m-1}{\lambda}
	(
	\mathcal{S}  \mathbf  {w}_{j}^{m-2}- \lambda  \mathbf  {w}_{j} \mathbf  {w}_{j}^{ \mathrm T }
	)
	\mathbf  {w}_{j}
	=
	0
	\cdot
	\mathbf  {w}_{j},
\end{equation}
which means
that
$  {\mathbf  {w}_{j} }$ itself
is an eigenvector of
$  \mathbf  J (\mathbf  {w}_{j},\lambda )  $ with eigenvalue 0,
and naturally, the other eigenvectors  lie in the
null  space of 	$  	\mathbf {w}_j$, denoted by
$\operatorname{null}(\mathbf  {w}_{j}  )$.
By further
utilizing  (\ref{swj})  and (\ref{lmdwj}),
it can be easily checked that
the  remaining  eigenvalues  of  $  \mathbf  J (\mathbf  {w}_{j},\lambda )  $
are the same and
there are only two different eigenvalues concerning
$   \mathbf  J (\mathbf  {w}_{j},\lambda )$,
which are
\begin{equation}
	0,
	\underbrace{
		\frac
		{ n  (m-1) }
		{
			1+
			(1-n)^{ m-2} (n-1) }  ,
		\dots,
		\frac
		{ n  (m-1) }
		{
			1+
			(1-n)^{ m-2} (n-1) }
	}_{n-2}
	.
\end{equation}

By considering
the odd and even cases for $m$ separately,
it can be concluded that
the  spectral radius of
$   \mathbf  J (\mathbf  {w}_{j},\lambda )$
is given by
\begin{align}\label{radiuscase}
	\rho (\mathbf J (\mathbf  {w}_{j},\lambda))
	=
	\begin{cases}
		\frac
		{ n  (m-1) }
		{ (n-1)^{ m-1}-1 }    ,
		\quad   m = 2k+1, k \in \mathbb N
		\\
		\frac
		{  n (m-1)}
		{(n-1)^{ m-1} +1 }    ,
		 \quad  m = 2k, k \in \mathbb N
	\end{cases},
\end{align}
where
for the odd $m$ cases,
since $
	1+
	(1-n)^{ m-2} (n-1)
	=	1-
	(n-1)^{ m-1}<0$
for $n \ge 3$,  $m \ge 3$,
the numerator
should be    replaced by  its  absolute value.

Note that
the   right-hand   of
(\ref{radiuscase})
has also been  derived
in Theorem 4.6
of   \cite{RobustEigen}.
See the first formula in its  proof for details.
Therefore,
the sequential
analysis
is the same as that for  Theorem 4.6.
 
	Two cases are discussed as follows:
	\begin{itemize}
\item  It can be checked that
for $n \ge 3$,  $m \ge 3$,
when
$n+ m \ge 8$,
it holds that
$ \rho (\mathbf J (\mathbf  {w}_{j},\lambda))  < 1 $,
and the
vectors in the frame of the  corresponding
regular  simplex  tensor  are  robust, demonstrating their uniqueness for those  combinations.
\item 	  If $(m,n) = (3,3)$  or  $(3,4)$,  $ \rho (\mathbf J (\mathbf  {w}_{j},\lambda))  \ge 1 $  and no robust eigenpairs exist.
\end{itemize}
In addition, for the special case $(m,n) = (4,3)$,
we can also obtain that $ \rho (\mathbf J (\mathbf  {w}_{j},\lambda)) = 1 $, 
which implies that the vectors in the frame are   non-robust, which 
is also consistent with the claim at   the beginning  of this proof. 
 Then, combining those results, 
we complete the proof for the  theorem, and state the theorem in the form presented above. 
$\blacksquare$
 \end{proof}

\begin{wlremark}[Comparison with Theorem 4.6
		in \cite{RobustEigen}]\label{compare46}
	Although
	the analysis
	of   Theorem 4.6
	in \cite{RobustEigen}
	is  also based on
	(\ref{radiuscase}),
	the  difference lies 
	 in that the result
	presented in Theorem 4.6
	is only an  	 upper   bound    result for  $\rho (\mathbf J (\mathbf  {w}_{j},\lambda)) $.
	See  Equations   (4.4) and (4.8) in \cite{RobustEigen} for details.
	 Here, however,  for
	 the vectors in the regular simplex frame, we
	accurately
	determine  the spectral radius
	by analyzing its eigenvalues distribution.
	In other words,
	we further show that
	the spectral radius  for 
	 the vectors in the  regular simplex frame  can 
	 reach   the bound  in Theorem 4.6,
	which thus   yields   a strict equality in  (\ref{radiuscase}).
\end{wlremark}

The above proof also confirms that
the  robust eigenpairs of   the   regular simplex tensor  do not exist
for the case of $(m,n) = (3,3)$, $(3,4)$, and $(4,3)$. This is also consistent with the experimental result
presented in Table 1 
 of  
\cite{RobustEigen}.
For    illustration,
Table  \ref{radius}   shows the spectral radius
of
$ \mathbf J (\mathbf  {w}_{j},\lambda)$
for  different  combinations   of  $(m,n)$.
Table  1
 of  
\cite{RobustEigen}
 records   the experimental results  on
 whether   the
tensor power  method successfully    converges to the vectors in the  regular simplex  frame.
Our direct  results  for  the  spectral radius  are also consistent  with those   presented in Table  1
 of
\cite{RobustEigen}. Note that
the case of  $m=2$
which 
 corresponds    to the matrix case  is not included.

\begin{table}[!ht]
	\normalsize
	\centering
	\caption{\\
		The  spectral  radius  calculation results
		of the  Jacobian matrix  at  
		 the vectors  in the  regular simplex  frame
		$ \mathbf w_{j}$  for the  regular simplex tensor
		$ \mathcal{S}_{\mathbf W}:=\sum_{i=1}^{n} \mathbf{w}_{i}^{\circ m}$
		with different  combinations of $(m,n)  $   based on  (\ref{radiuscase}),
		where $m,n-1$  are  the order and dimension of  the  tensor, respectively.
	}
	\setlength{\tabcolsep}{8pt}
	\renewcommand\arraystretch{1.5}
	\begin{tabular}{ | c|  c  c  c  c  c  c c c|  }
		\hline
		\diagbox{$n $ }{$\rho (\mathbf J (\mathbf  {w}_{j},\lambda)) $ }{$m $ }
		 & 3
		 & 4
		 & 5
		 & 6
		 & 7
		 & 8
		 & 9
		 & 10
		\\
		\hline
		3
		 & \bf  2
		 & \bf   1
		 & $\frac{4 }{ 5}$
		 & $\frac{ 5}{ 11} $
		 & $\frac{ 2}{7}$
		 & $\frac{7 }{43 }$
		 & $\frac{ 8}{ 85}$
		 & $\frac{1 }{19} $
		\\
		\hline
		4
		 & \bf  1
		 & $\frac{3 }{7}$
		 & $\frac{1 }{ 5} $
		 & $\frac{ 5}{ 61}  $
		 & $\frac{ 3}{ 91}$
		 & $\frac{ 7}{547}$
		 & $\frac{ 1}{205 }$
		 & $\frac{9 }{ 4921}$
		\\
		\hline
		5
		 & $\frac{2 }{3}$
		 & $\frac{3 }{13 }$
		 & $\frac{4 }{ 51}$
		 & $\frac{ 1}{41 }$
		 & $\frac{2 }{273 }$
		 & $\frac{7 }{ 3277}$
		 & $\frac{8  }{ 13017}$
		 & $\frac{9 }{ 52429}$
		\\
		\hline
		6
		 & $\frac{ 1}{2 }$
		 & $\frac{ 1}{7 }$
		 & $\frac{1 }{26}$
		 & $\frac{ 5}{ 521}$
		 & $\frac{1 }{434 }$
		 & $\frac{ 7}{ 13021}$
		 & $\frac{1 }{8138 }$
		 & $\frac{ 1}{36169 }$
		\\
		\hline
		7
		 & $\frac{2 }{ 5}$
		 & $\frac{3 }{31 }$
		 & $\frac{ 4}{ 185}$
		 & $\frac{5 }{ 1111 }$
		 & $\frac{ 6}{ 6665}$
		 & $\frac{ 1}{ 5713}$
		 & $\frac{8 }{ 239945}$
		 & $\frac{ 2}{ 319927}$
		\\
		\hline
		8
		 & $\frac{1 }{ 3}$
		 & $\frac{ 3}{43 }$
		 & $\frac{1 }{75 }$
		 & $\frac{5 }{2101 }$
		 & $\frac{1 }{2451 }$
		 & $\frac{7 }{ 102943}$
		 & $\frac{ 1}{ 90075}$
		 & $\frac{ 1}{560467 }$
		\\
		\hline
	\end{tabular}
	\label{radius}
\end{table}

\section{ Conclusion and future  work}\label{futurework}

In this paper,
we mainly focus on a conjecture concerning  robust eigenpairs  of   the  regular simplex tensor
 of order $m$ and dimension $n-1$ with  $n\ge 3$ and $m\ge 3$,
and  	 provide a proof concerning its existence and uniqueness.
	In detail, this work shows that, up to sign equivalence,  
 no robust eigenpairs exist 	for  
 the  cases $(m,n) = (3,3), (3,4)$, or $(4,3)$.
 For the remaining combinations of $(m,n)$, 
	the only robust eigenvectors of a regular simplex tensor are the  vectors in the frame, 
	thereby confirming  the conjecture stated in 
	\cite{RobustEigen}. 
 Following  the simplification and convention  of sign equivalence, 
 our proof   first investigates   the relationship between
robust
and  locally maximized eigenpairs,
providing an auxiliary  criterion for
narrowing the scope of   robust  eigenpair.
Then,
 the  core of this work is to provide the optimization landscape of the related model.
Such a landscape analysis
will  greatly simplify
the proof   process of the focused conjecture 
 by   checking
a smaller subset  that   only  involves  locally maximized  eigenpairs.
Benefiting from such a  procedure,
 the   conjecture on  robust eigenpairs
is finally   confirmed.

Limitations  and potential implications   of this work are   discussed here for future study.
On the one hand,
 the  regular simplex tensor is a highly structured type, with
a finite number of equiangular vectors  as  its  generating  elements.
Such a special structure 
 can hardly  be satisfied in most application scenarios, thus
severely limiting   its practical utility.
On the other hand,
we also realize  that  	 the  simplex
serves as a fundamental  geometrical structure, and
is widely used in many fields.
For example,
 remote sensing image processing   always   assumes
that all pixels  lie  in the interior of a simplex \cite{psca}.
 A  similar assumption
also  holds   for  topic modeling \cite{LDA}, to  name   a few.

In this sense,
under  the premise   of not
destroying   the  inherent   simplex  structure,
how to generalize the above conclusion
 to  more general  data
 structures
 is  an interesting
issue.
We naturally
 consider   the Dirichlet distribution  whose samples
are distributed in the interior of the simplex,
indicating a strong and essential
connection with the  underlying   simplex \cite{2006Pattern}.
Actually,
in the remote sensing and topic modeling  fields
mentioned above,
 the   Dirichlet distribution
is widely  used  as   a prior to model the sampling   process.
Our preliminary  simulation experiments
have confirmed
that the eigen-structure of
 the  tensor deduced by   the   Dirichlet prior
is consistent with that of  regular simplex tensor.
We  will focus on this
bridge to provide a  rigorous    theoretical guarantee,
which is beyond the scope and length of the current 
 work, and  will be  studied  in our
future  work.

In addition, the proof strategy developed in the paper is not limited to 
 the  regular simplex tensor   only  
and can also generalize  to
other types of tensors, such as the equiangular tight frame tensor mentioned in \cite{RobustEigen},
which could be a more generalized concept than  the  regular simplex   tensor.
This may also prompt some potential investigations for future generalization  to   other tensors.
Meanwhile, we only focus on the Z-eigenpairs  over    the real field.
Whether this strategy can be further generalized  to
   E-eigenpairs  over   the complex field remains an open issue.
The other conjectures   stated   in the original reference \cite{RobustEigen}
are also worth investigating   in the future.
One can refer to Conjectures  4.9 and 5.2, Problem 5.1 for details.

\section*{Declarations}

\textbf{Acknowledgements}
{This work was supported by National Natural Science Foundation of China (Grant No. 62401088).}

\textbf{Conflict of interest}
{The authors declare that they have no conflict of interest.}

\section*{The datasets generated during and/or analyzed during the current study are available from the corresponding author on reasonable request.}

\bibliographystyle{unsrt}
\bibliography{simplexref}

\begin{appendix}

	\section{Appendix}
	In this  appendix,
	we 	 provide some important theorems   and lemmas
	used in our proof.

	\begin{wldefinition}[\textbf{direct  sum}]
		The  direct  sum  of   an   $ n \times n$  matrix  $\mathbf  {A}  $ and  an   $m \times m$ matrix   $\mathbf  {B} $  is a  matrix  
			 of size    $(n+m) \times(n+m)$,  denoted    $\mathbf  A \oplus \mathbf  B$ 	 as  follows:
		\begin{equation}
			\mathbf  {A} \oplus \mathbf  {B}=
			\left[\begin{array}{cc}
					\mathbf {A}              & \mathbf {O}_{n \times m} \\
					\mathbf {O}_{m \times n} & \mathbf {B}
				\end{array}\right] .
		\end{equation}
	\end{wldefinition}

	\begin{wllemma}(Ref \cite{zhang2017matrix})\label{direct_eig}
		Suppose  that
		$ (\lambda_{i},  \mathbf u_{i})$
		($i=1,2,\dots,n$),
		$  (\sigma_{j},  \mathbf v_{j})$
		($j=1,2,\dots,m$)
		are  eigenpairs of
		$ \mathbf A$ and $ \mathbf B$, respectively.
		The ($n+m$)  eigenpairs   of
		$ \mathbf  {A} \oplus \mathbf  {B}$
			 are
		$ (\lambda_{i},
			\begin{bmatrix}
				\mathbf u_{i} \\
				\mathbf 0_{m}
			\end{bmatrix}	)$,
		$ (\sigma_{j},
			\begin{bmatrix}
				\mathbf 0_{n} \\
				\mathbf v_{j}
			\end{bmatrix}	)$ 	($i=1,2,\dots,n$, $j=1,2,\dots,m$).
	\end{wllemma}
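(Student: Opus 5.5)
The plan is to verify the claimed eigenpairs directly by block multiplication, and then to check that they account for the full spectrum of $\mathbf A \oplus \mathbf B$, counted with multiplicity.

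\emph{Step 1: each listed pair is an eigenpair.} Fix an eigenpair $(\lambda_i,\mathbf u_i)$ of $\mathbf A$. Multiplying the block matrix by the padded vector gives
\begin{equation*}
(\mathbf A\oplus\mathbf B)\begin{bmatrix}\mathbf u_i\\ \mathbf 0_m\end{bmatrix}
=\begin{bmatrix}\mathbf A\mathbf u_i+\mathbf O_{n\times m}\mathbf 0_m\\ \mathbf O_{m\times n}\mathbf u_i+\mathbf B\mathbf 0_m\end{bmatrix}
=\begin{bmatrix}\lambda_i\mathbf u_i\\ \mathbf 0_m\end{bmatrix}
=\lambda_i\begin{bmatrix}\mathbf u_i\\ \mathbf 0_m\end{bmatrix}.
\end{equation*}
The padded vector is nonzero because $\mathbf u_i\neq\mathbf 0$, so it is a genuine eigenvector. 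The symmetric computation handles $(\sigma_j,[\mathbf 0_n;\mathbf v_j])$, using the lower-right block $\mathbf B$ instead.

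\emph{Step 2: these are all the eigenvalues, with the right multiplicities.} The matrix $\mathbf A\oplus\mathbf B-\varepsilon\mathbf I_{n+m}=(\mathbf A-\varepsilon\mathbf I_n)\oplus(\mathbf B-\varepsilon\mathbf I_m)$ is block diagonal, and the determinant of a block-diagonal matrix is the product of the block determinants. Hence
\begin{equation*}
\det(\mathbf A\oplus\mathbf B-\varepsilon\mathbf I_{n+m})=\det(\mathbf A-\varepsilon\mathbf I_n)\,\det(\mathbf B-\varepsilon\mathbf I_m).
\end{equation*}
The spectrum of the direct sum, with algebraic multiplicities, is therefore exactly the multiset $\{\lambda_1,\dots,\lambda_n,\sigma_1,\dots,\sigma_m\}$.

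\emph{Step 3: the eigenvectors form a complete set.} This is the only point requiring care, because "$n+m$ eigenpairs" should mean a complete set of independent eigenvectors. If $\{\mathbf u_i\}$ is a basis of $\mathbb R^n$ and $\{\mathbf v_j\}$ is a basis of $\mathbb R^m$, then the padded vectors are linearly independent. Indeed, their first $n$ coordinates and last $m$ coordinates decouple, so any linear relation among them splits into separate relations among the $\mathbf u_i$ and among the $\mathbf v_j$, and both sets of coefficients must vanish. The padded vectors then form an eigenbasis of $\mathbb R^{n+m}$.

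The hypothesis that the $\mathbf u_i$ and $\mathbf v_j$ form bases is automatic in every application in the paper. There the blocks are symmetric (for instance $\sigma_a(\mathbf I_k-\frac1k\mathbf J_k)$), so orthonormal eigenbases exist by the spectral theorem, and the padded vectors are then orthonormal as well. In the general non-diagonalizable case, the statement is read as the eigenvalue correspondence of Step 2 together with the eigenvector construction of Step 1.
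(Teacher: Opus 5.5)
Your proof is correct: block multiplication verifies the listed eigenpairs, the factorization $\det(\mathbf A\oplus\mathbf B-\varepsilon\mathbf I_{n+m})=\det(\mathbf A-\varepsilon\mathbf I_n)\det(\mathbf B-\varepsilon\mathbf I_m)$ shows no eigenvalue is missing, and the decoupling of coordinates gives linear independence of the padded eigenvectors. The paper gives no proof of its own (it quotes the lemma from \cite{zhang2017matrix}), and your argument is the standard textbook one; your diagonalizability caveat costs nothing, since every block the paper feeds into this lemma is symmetric.
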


	\begin{wllemma}(Ref \cite{zhang2017matrix})\label{detblocklemma}
		Assume that   a  matrix  	 of size    $(n+m) \times(n+m)$,
		can be   partitioned into two blocks of size $n \times n$ and $m \times m$, respectively.
	Then, 	its  determinant  can be calculated by
		\begin{equation}\label{detblock}
			\det \left[\begin{array}{ll}
					\mathbf {A} & \mathbf {B} \\
					\mathbf {C} & \mathbf {D}
				\end{array}\right]
			=det  (\mathbf {D})
			det\left(\mathbf {A}-\mathbf {B} \mathbf {D}^{-1} \mathbf {C}\right)
		\end{equation}
		when  the  matrix  $\mathbf D$ is
		assumed to be  invertible.
	\end{wllemma}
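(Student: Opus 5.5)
The plan is to factor the block matrix into a product of block-triangular and block-diagonal factors whose determinants are immediate, and then use multiplicativity of the determinant. Throughout, $\mathbf A$ is $n\times n$, $\mathbf D$ is $m\times m$ and invertible, and $\mathbf B,\mathbf C$ have the compatible sizes $n\times m$ and $m\times n$.

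The key identity I would verify first is the Schur-complement factorization
\begin{equation*}
\left[\begin{array}{ll}
\mathbf A & \mathbf B \\
\mathbf C & \mathbf D
\end{array}\right]
=
\left[\begin{array}{cc}
\mathbf I_{n} & \mathbf B\mathbf D^{-1} \\
\mathbf O_{m\times n} & \mathbf I_{m}
\end{array}\right]
\left[\begin{array}{cc}
\mathbf A-\mathbf B\mathbf D^{-1}\mathbf C & \mathbf O_{n\times m} \\
\mathbf O_{m\times n} & \mathbf D
\end{array}\right]
\left[\begin{array}{cc}
\mathbf I_{n} & \mathbf O_{n\times m} \\
\mathbf D^{-1}\mathbf C & \mathbf I_{m}
\end{array}\right].
\end{equation*}
This is checked by direct block multiplication. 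Multiplying the last two factors gives the matrix with rows $[\mathbf A-\mathbf B\mathbf D^{-1}\mathbf C,\ \mathbf O]$ and $[\mathbf C,\ \mathbf D]$. Left-multiplying by the first factor then adds $\mathbf B\mathbf D^{-1}$ times the second block row to the first. This restores the $(1,1)$ block to $\mathbf A$ and the $(1,2)$ block to $\mathbf B$, while leaving the second block row $[\mathbf C,\ \mathbf D]$ unchanged.

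Next I take determinants of both sides. The two outer factors are block upper- and block lower-triangular with identity diagonal blocks, so they are genuinely triangular matrices with ones on the diagonal and have determinant $1$. The middle factor is the direct sum $(\mathbf A-\mathbf B\mathbf D^{-1}\mathbf C)\oplus\mathbf D$. By Lemma \ref{direct_eig}, its eigenvalues are the union of those of the two blocks, so its determinant is $\det(\mathbf D)\det(\mathbf A-\mathbf B\mathbf D^{-1}\mathbf C)$. Alternatively, this follows from a Laplace expansion along the block structure. Multiplicativity of the determinant then yields (\ref{detblock}).

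There is no real obstacle here; the only point requiring care is getting the sizes and the order of the factors right, so that $\mathbf D^{-1}$ appears exactly where invertibility is assumed.
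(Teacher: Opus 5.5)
Your proof is correct. The paper gives no argument for this lemma and simply quotes it from \cite{zhang2017matrix}; your factorization through the Schur complement $\mathbf A-\mathbf B\mathbf D^{-1}\mathbf C$ is the standard textbook proof, and both the block multiplication and the unit-triangular determinants are checked correctly.

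One step is worth tightening: the determinant of the middle factor. Justify it by your Laplace-expansion alternative, for instance via $\bigl((\mathbf A-\mathbf B\mathbf D^{-1}\mathbf C)\oplus\mathbf I_m\bigr)(\mathbf I_n\oplus\mathbf D)$, rather than by Lemma \ref{direct_eig}. That lemma presupposes complete sets of eigenpairs, so on its own it does not handle defective blocks or algebraic multiplicities.
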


	\begin{wllemma}(Ref \cite{zhang2017matrix})\label{AB_BA_eig}
		Given
		the    $ m \times n$    matrix
		$\mathbf A$
		and   the   $n \times m$   matrix
		$\mathbf B$ ( $m <  n $),
		assume  that
		the $m$ eigenvalues of
		$ 	\mathbf A
			\mathbf B $
		are  denoted  by
		$
			\lambda_{1}, \lambda_{2},  \dots, \lambda_{m}$,
		then  the $n$  eigenvalues  of
		$ 	\mathbf  B
			\mathbf A $
		are  given  by
 	$\lambda_1, \lambda_2, \dots, \lambda_m, 0, \dots, 0$, where 
	the multiplicity  of zero eigenvalue is $n-m$.
	\end{wllemma}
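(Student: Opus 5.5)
The plan is to prove the polynomial identity
\begin{equation*}
\lambda^{n}\det(\lambda \mathbf I_m-\mathbf A\mathbf B)=\lambda^{m}\det(\lambda \mathbf I_n-\mathbf B\mathbf A),
\end{equation*}
valid for every $\lambda\in\mathbb C$. Once it holds, the characteristic polynomial of $\mathbf B\mathbf A$ equals $\lambda^{n-m}$ times that of $\mathbf A\mathbf B$. Hence the $n$ eigenvalues of $\mathbf B\mathbf A$, counted with algebraic multiplicity, are $\lambda_1,\dots,\lambda_m$ together with $n-m$ additional zeros, which is exactly the claim.

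To obtain the identity I will use a block similarity on $(m+n)\times(m+n)$ matrices rather than Lemma \ref{detblocklemma}, since the latter needs an invertible block. Set
\begin{equation*}
\mathbf X=\begin{bmatrix}\mathbf A\mathbf B & \mathbf O_{m\times n}\\ \mathbf B & \mathbf O_{n}\end{bmatrix},\quad
\mathbf Y=\begin{bmatrix}\mathbf O_{m} & \mathbf O_{m\times n}\\ \mathbf B & \mathbf B\mathbf A\end{bmatrix},\quad
\mathbf T=\begin{bmatrix}\mathbf I_m & \mathbf A\\ \mathbf O_{n\times m} & \mathbf I_n\end{bmatrix}.
\end{equation*}
Multiplying out the blocks shows that both $\mathbf X\mathbf T$ and $\mathbf T\mathbf Y$ equal $\begin{bmatrix}\mathbf A\mathbf B & \mathbf A\mathbf B\mathbf A\\ \mathbf B & \mathbf B\mathbf A\end{bmatrix}$. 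Since $\mathbf T$ is invertible, with inverse obtained by replacing $\mathbf A$ by $-\mathbf A$, this gives $\mathbf T^{-1}\mathbf X\mathbf T=\mathbf Y$. Similar matrices share characteristic polynomials. Both $\mathbf X$ and $\mathbf Y$ are block lower triangular, so $\det(\lambda\mathbf I-\mathbf X)=\det(\lambda\mathbf I_m-\mathbf A\mathbf B)\lambda^{n}$ and $\det(\lambda\mathbf I-\mathbf Y)=\lambda^{m}\det(\lambda\mathbf I_n-\mathbf B\mathbf A)$, which is the desired identity.

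The only delicate point is the interpretation of multiplicities. If $\mathbf A\mathbf B$ is singular, some $\lambda_i$ are themselves zero, and the statement must then be read with algebraic multiplicity: the zero eigenvalue of $\mathbf B\mathbf A$ has multiplicity equal to that of $\mathbf A\mathbf B$ plus $n-m$. Working at the level of characteristic polynomials handles this automatically, which is why I avoid an eigenvector-based argument. An eigenvector argument (if $\mathbf A\mathbf B\mathbf x=\lambda\mathbf x$ with $\lambda\neq0$, then $\mathbf B\mathbf x$ is an eigenvector of $\mathbf B\mathbf A$) would only yield geometric statements and would break down at $\lambda=0$.
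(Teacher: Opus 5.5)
Your proof is correct and complete. The block products $\mathbf X\mathbf T=\mathbf T\mathbf Y$ check out, and both $\mathbf X$ and $\mathbf Y$ are block lower triangular. Cancelling $\lambda^{m}$ is legitimate because the equality of characteristic polynomials of similar matrices is an identity in $\mathbb C[\lambda]$. This gives $\det(\lambda\mathbf I_n-\mathbf B\mathbf A)=\lambda^{n-m}\det(\lambda\mathbf I_m-\mathbf A\mathbf B)$.

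The paper gives no proof of this lemma; it simply imports it from the cited matrix-analysis text. So there is no competing argument to compare against, and yours is the classical block-similarity proof. Compared with a Schur-complement route through Lemma \ref{detblocklemma} (taking $\mathbf D=\lambda\mathbf I_n$), which is valid only for $\lambda\neq0$ and must then be extended by a polynomial-identity argument, your similarity avoids the case split at $\lambda=0$.

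Your remark on multiplicities is the right reading and is worth keeping. It actually matters for the paper's only use of the lemma, in Lemma \ref{lemma.wmw.k}. There, with $\mathbf A=\mathbf W$ and $\mathbf B=\mathbf M\mathbf W^{\mathrm T}$, the product $\mathbf A\mathbf B=\mathbf W\mathbf M\mathbf W^{\mathrm T}=c_{n,m}\mathbf K_{\mathbf W}$ is singular, since it annihilates $\mathbf v$. So the literal phrase ``the multiplicity of the zero eigenvalue is $n-m$'' would be false in that application. What the paper needs is only that the nonzero eigenvalues of $\mathbf W\mathbf M\mathbf W^{\mathrm T}$ and $\mathbf M\mathbf W^{\mathrm T}\mathbf W$ coincide with multiplicity, and your characteristic-polynomial identity delivers exactly that.
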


	\begin{wllemma}[\textbf{Weyl Theorem}]\label{weyltheo}
		Assume  that
		$\mathbf A$, $\mathbf B \in
			\mathbb C^{n \times n}$ are Hermitian   matrices,  and  the  eigenvalues are  sorted  
			 in    ascending  order.
		\begin{equation}
			\begin{aligned}
				\lambda_{1}(\mathbf{A})            & \le \lambda_{2}(\mathbf{A}) \le \cdots \le \lambda_{n}(\mathbf{A})                       \\
				\lambda_{1}(\mathbf{B})            & \le \lambda_{2}(\mathbf{B}) \le \cdots \le \lambda_{n}(\mathbf{B})                       \\
				\lambda_{1}(\mathbf{A}+\mathbf{B}) & \le \lambda_{2}(\mathbf{A}+\mathbf{B}) \le \cdots \le \lambda_{n}(\mathbf{A}+\mathbf{B})
			\end{aligned}
		\end{equation}

		Then,  the  following  inequalities   hold:
		\begin{equation}
			\lambda_{i}
			(\mathbf{A}+\mathbf{B})
			\ge
			\begin{cases}
				\lambda_{i}  (\mathbf{A})  +\lambda_{1}(\mathbf{B}) \\
				\lambda_{i-1}(\mathbf{A})+\lambda_{2}(\mathbf{B})   \\
				\vdots                                              \\
				\lambda_{1}(\mathbf{A})+\lambda_{i}(\mathbf{B})
			\end{cases}
			\quad
			\lambda_{i}(\mathbf A+\mathbf B)
			\le
			\begin{cases}
				\lambda_{i}(\mathbf{A})+\lambda_{n}(\mathbf B)      \\
				\lambda_{i+1}(\mathbf{A})+\lambda_{n-1}(\mathbf{B}) \\
				\vdots                                              \\
				\lambda_{n}(\mathbf{A})+\lambda_{i}(\mathbf{B})
			\end{cases}
		\end{equation}
		for  $i=1,2,\dots, n$.
	\end{wllemma}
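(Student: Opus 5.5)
The statement is the classical Weyl inequality, and I would prove it from the Courant--Fischer min--max characterization. For a Hermitian $\mathbf{C}\in\mathbb{C}^{n\times n}$ with ascending eigenvalues, write $R_{\mathbf C}(\mathbf x)=\mathbf x^{\mathrm H}\mathbf C\mathbf x/\mathbf x^{\mathrm H}\mathbf x$. Then
\begin{equation*}
\lambda_i(\mathbf C)=\max_{\dim \mathbb U=n-i+1}\ \min_{\mathbf 0\neq\mathbf x\in\mathbb U} R_{\mathbf C}(\mathbf x)=\min_{\dim \mathbb U=i}\ \max_{\mathbf 0\neq\mathbf x\in\mathbb U} R_{\mathbf C}(\mathbf x).
\end{equation*}
I would first recall this characterization, or prove it quickly. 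The proof expands $\mathbf x$ in an orthonormal eigenbasis of $\mathbf C$. A subspace of dimension $n-i+1$ must meet the span of the first $i$ eigenvectors nontrivially, which gives one direction. The span of the last $n-i+1$ eigenvectors attains the value, which gives the other.

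For the lower bounds, fix $i$ and $j$ with $1\le j\le i$. The goal is $\lambda_i(\mathbf A+\mathbf B)\ge \lambda_{i-j+1}(\mathbf A)+\lambda_j(\mathbf B)$.
\begin{itemize}
\item Let $\mathbb U_{\mathbf A}$ be the span of the eigenvectors of $\mathbf A$ for $\lambda_{i-j+1}(\mathbf A),\dots,\lambda_n(\mathbf A)$. It has dimension $n-i+j$, and $R_{\mathbf A}\ge\lambda_{i-j+1}(\mathbf A)$ on it.
\item Let $\mathbb U_{\mathbf B}$ be the span of the eigenvectors of $\mathbf B$ for $\lambda_j(\mathbf B),\dots,\lambda_n(\mathbf B)$. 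It has dimension $n-j+1$, and $R_{\mathbf B}\ge\lambda_j(\mathbf B)$ on it.
\item By the dimension formula, $\dim(\mathbb U_{\mathbf A}\cap\mathbb U_{\mathbf B})\ge (n-i+j)+(n-j+1)-n=n-i+1$. Choose any subspace $\mathbb U$ of dimension exactly $n-i+1$ inside the intersection.
\item On $\mathbb U$, linearity gives $R_{\mathbf A+\mathbf B}=R_{\mathbf A}+R_{\mathbf B}\ge\lambda_{i-j+1}(\mathbf A)+\lambda_j(\mathbf B)$.
\item The max--min formula then gives the claimed bound on $\lambda_i(\mathbf A+\mathbf B)$.
\end{itemize}
As $j$ runs from $1$ to $i$, this produces exactly the listed lower bounds $\lambda_i(\mathbf A)+\lambda_1(\mathbf B),\dots,\lambda_1(\mathbf A)+\lambda_i(\mathbf B)$.

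For the upper bounds, I would not redo the argument. Instead, apply the lower bounds to $-\mathbf A$ and $-\mathbf B$, using $\lambda_k(-\mathbf C)=-\lambda_{n-k+1}(\mathbf C)$. Set $i'=n-i+1$ and $j'=n-k+1$, so that $j'\le i'$ exactly when $k\ge i$. The lower bound for the negated pair reads $\lambda_{i'}(-\mathbf A-\mathbf B)\ge\lambda_{i'-j'+1}(-\mathbf A)+\lambda_{j'}(-\mathbf B)$. Negating it gives $\lambda_i(\mathbf A+\mathbf B)\le\lambda_{i+n-k}(\mathbf A)+\lambda_k(\mathbf B)$ for $k=i,\dots,n$. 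This is the listed family, from $\lambda_i(\mathbf A)+\lambda_n(\mathbf B)$ to $\lambda_n(\mathbf A)+\lambda_i(\mathbf B)$.

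I expect no real obstacle. The only delicate points are the index bookkeeping in the sign-flip step and making sure the intersection has dimension at least $n-i+1$ rather than off by one. I would check both on the extreme cases $j=1$ and $j=i$ before writing the final version.
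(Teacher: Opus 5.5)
Your proof is correct. It is the standard Courant--Fischer argument for Weyl's inequalities. The dimension count $(n-i+j)+(n-j+1)-n=n-i+1$ is right. The sign-flip bookkeeping also checks out: $i'-j'+1=k-i+1$, so $\lambda_{k-i+1}(-\mathbf A)=-\lambda_{n-k+i}(\mathbf A)$ and $\lambda_{j'}(-\mathbf B)=-\lambda_k(\mathbf B)$, which yields exactly the listed upper bounds for $k=i,\dots,n$.

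The paper gives no proof of this lemma; it quotes it in the appendix as a classical fact. There is therefore no competing route to compare, and your argument simply supplies what the paper takes for granted. Two small remarks:
\begin{itemize}
\item You only use the max--min half of Courant--Fischer, so the min--max equality you state can be dropped.
\item The paper's applications, in the proof of Lemma~\ref{Theorem_structureoflocaleven2}, use only lower bounds of the form $\lambda_n(\mathbf A+\mathbf B)\ge\lambda_{n-j+1}(\mathbf A)+\lambda_j(\mathbf B)$. One example is $\lambda_n(\mathbf M)\ge\lambda_{n-2}(\mathbf H)+\lambda_3(-\mathbf H^{\ast})$. Your direct argument covers these.
\end{itemize}
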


\begin{wllemma}[Jordan--Wielandt lemma]\label{lemma.svdblock}
	Let $\mathbf  A \in \mathbb{R}^{m \times n}$ and let
$
	\sigma_1 \ge \sigma_2 \ge \cdots \ge \sigma_p \ge 0
	$
	be its singular values, where $p = \min\{m, n\}$. Consider the augmented matrix
	$
	\mathbf B = \begin{pmatrix}
	\mathbf O & \mathbf A \\
	\mathbf A^{\mathrm T} &\mathbf  O
	\end{pmatrix}.
	$
	Then the eigenvalues of $\mathbf B$ are
	\[
	-\sigma_1, \; -\sigma_2, \; \ldots, \; -\sigma_p,
	\; \underbrace{0, \ldots, 0}_{|m-n| },
	\; \sigma_p, \; \ldots, \; \sigma_2, \; \sigma_1.
	\]
\end{wllemma}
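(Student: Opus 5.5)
The plan is to diagonalize $\mathbf B$ explicitly using a full singular value decomposition of $\mathbf A$. Write $\mathbf A=\mathbf U\boldsymbol\Sigma\mathbf V^{\mathrm T}$ with $\mathbf U=[\mathbf u_1,\dots,\mathbf u_m]\in\mathbb R^{m\times m}$ and $\mathbf V=[\mathbf v_1,\dots,\mathbf v_n]\in\mathbb R^{n\times n}$ orthogonal. This gives the relations
\[
\mathbf A\mathbf v_i=\sigma_i\mathbf u_i,\qquad \mathbf A^{\mathrm T}\mathbf u_i=\sigma_i\mathbf v_i\qquad (i=1,\dots,p).
\]
The remaining singular vectors lie in the kernels: $\mathbf A\mathbf v_j=\mathbf 0$ for $j>p$ and $\mathbf A^{\mathrm T}\mathbf u_j=\mathbf 0$ for $j>p$. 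Note that the zero block sizes in $\mathbf B$ are $m\times m$ and $n\times n$, so $\mathbf B$ is symmetric of size $(m+n)\times(m+n)$.

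First I will check directly, for each $i\le p$, that
\[
\mathbf B\begin{bmatrix}\mathbf u_i\\ \pm\mathbf v_i\end{bmatrix}
=\begin{bmatrix}\pm\mathbf A\mathbf v_i\\ \mathbf A^{\mathrm T}\mathbf u_i\end{bmatrix}
=\pm\sigma_i\begin{bmatrix}\mathbf u_i\\ \pm\mathbf v_i\end{bmatrix}.
\]
This produces the $2p$ eigenvalues $\pm\sigma_1,\dots,\pm\sigma_p$ with eigenvectors $\frac{1}{\sqrt2}[\mathbf u_i^{\mathrm T},\pm\mathbf v_i^{\mathrm T}]^{\mathrm T}$.

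Next, I will handle the leftover singular vectors. If $m>n$, I take the vectors $[\mathbf u_j^{\mathrm T},\mathbf 0^{\mathrm T}]^{\mathrm T}$ for $j=p+1,\dots,m$. If $n>m$, I take $[\mathbf 0^{\mathrm T},\mathbf v_j^{\mathrm T}]^{\mathrm T}$ for $j=p+1,\dots,n$. In either case these lie in the kernel of $\mathbf B$ and supply $|m-n|$ zero eigenvalues.

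Finally, I will verify that the full collection of $2p+|m-n|=m+n$ vectors is orthonormal. This follows from the orthonormality of the columns of $\mathbf U$ and $\mathbf V$, together with the observation that $\frac12(\mathbf u_i^{\mathrm T}\mathbf u_i-\mathbf v_i^{\mathrm T}\mathbf v_i)=0$ for the $\pm$ pair with the same index. Hence these vectors form a complete eigenbasis, and the listed values are exactly the spectrum of $\mathbf B$, counted with multiplicity. Zero singular values among $\sigma_1,\dots,\sigma_p$ cause no problem, since they just contribute additional $\pm 0$ entries.

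As a cross-check on the multiplicities, which is the only delicate point, I will also observe two facts. First, $\mathbf D\mathbf B\mathbf D=-\mathbf B$ with $\mathbf D=\operatorname{diag}(\mathbf I_m,-\mathbf I_n)$, so the spectrum is symmetric about $0$. Second, $\mathbf B^2=\operatorname{diag}(\mathbf A\mathbf A^{\mathrm T},\mathbf A^{\mathrm T}\mathbf A)$, whose eigenvalues are $\sigma_i^2$, each appearing twice, together with $|m-n|$ extra zeros, by Lemma \ref{AB_BA_eig}.
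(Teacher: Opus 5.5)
The paper gives no proof of this lemma. It quotes the Jordan--Wielandt lemma in the appendix as a standard fact, and uses it only to read off the spectrum of the off-diagonal block matrix in the $p=s=1$ case of the proof of Lemma~\ref{Theorem_structureoflocaleven2}. So there is nothing to compare against, and your argument has to stand on its own. It does: it is correct and complete.

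The $m+n$ vectors $\frac{1}{\sqrt2}[\mathbf u_i^{\mathrm T},\pm\mathbf v_i^{\mathrm T}]^{\mathrm T}$ for $i\le p$, together with the zero-padded leftover singular vectors, are orthonormal eigenvectors of $\mathbf B$. Hence $\mathbf B$ is orthogonally diagonalized with exactly the listed diagonal entries, multiplicities included. Zero singular values are handled correctly, since they simply contribute $\pm 0$.

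What you call a cross-check is itself a complete second proof:
\begin{itemize}
\item Because $\mathbf B$ is symmetric, $\mathbf D\mathbf B\mathbf D=-\mathbf B$ forces $\mu$ and $-\mu$ to have equal multiplicity.
\item For $\mu>0$, their combined multiplicity equals that of $\mu^2$ in $\mathbf B^2=\operatorname{diag}(\mathbf A\mathbf A^{\mathrm T},\mathbf A^{\mathrm T}\mathbf A)$.
\item $\ker\mathbf B=\ker\mathbf B^2$ gives $m+n-2r$ zero eigenvalues, with $r=\operatorname{rank}\mathbf A$. This matches the $2(p-r)+|m-n|$ zeros in the list.
\end{itemize}
That route needs only the eigenvalues of $\mathbf A\mathbf A^{\mathrm T}$ and $\mathbf A^{\mathrm T}\mathbf A$. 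Your main route requires completing $\mathbf U,\mathbf V$ to full orthonormal bases, but in exchange it yields explicit eigenvectors.
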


	The  following  theorem is  well  established     for   the   constrained  optimization  problem   to  identify  the  locally  optimal   solutions
	(Page 332  of     \cite{Numerical}):

	\begin{theorem}[Second-order  sufficient   condition]\label{second_order_necessary}\cite{Numerical}
For model    (\ref{opti_ori}), define  the set
		$\mathbb V $
	as
		\begin{equation}\label{vdefine}
			\mathbb V=\{
			\mathbf w \in \mathbb R^{n}
			\vert   (\triangledown  g)^{\mathrm T} \mathbf w   =0
			\}=
			\operatorname{null} [(\triangledown  g)^{\mathrm T} ]
			=
			\operatorname{null} [\mathbf A ],
		\end{equation}
		where   $  \operatorname{null}( \mathbf A) $
		denotes the null space  of $\mathbf A$  and $  \triangledown  g =\mathbf v $  denotes the gradient of the constraint: $ g (\mathbf v) =
			\mathbf v^{\mathrm T}
			\mathbf v
			-1
			=0 $.
			For the Hessian matrix defined in (\ref{hessian_matrix}),	suppose that
			for any  nonzero  vector $ \mathbf w \in \mathbb V $,
			if
			\begin{equation}\label{second_order}
			\mathbf w^{\mathrm T}
			\mathbf H (\mathbf v)
			\mathbf w
			< 0
			\end{equation}
			holds, then
			$\mathbf v $
			is a strict  local maximum solution of (\ref{opti_ori}).
	\end{theorem}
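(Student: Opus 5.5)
The plan is the classical Taylor-expansion argument for the Lagrangian, specialized to the single sphere constraint $g(\mathbf v)=\mathbf v^{\mathrm T}\mathbf v-1=0$ of model (\ref{opti_ori}). Throughout, $\mathbf v$ is a KKT point with multiplier $\lambda$, so $\triangledown_{\mathbf v}L(\mathbf v,\lambda)=\mathbf 0$ for the Lagrangian (\ref{Lagrangian_function}). The key observation is that for every feasible $\mathbf x$ (i.e.\ $\mathbf x^{\mathrm T}\mathbf x=1$) the penalty term vanishes, so $L(\mathbf x,\lambda)=\frac1m\mathcal S\mathbf x^m$. Comparing objective values near $\mathbf v$ on the sphere is therefore the same as comparing values of $L(\cdot,\lambda)$, whose gradient vanishes at $\mathbf v$.

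First, expand. Since $L(\cdot,\lambda)$ is a polynomial of degree $m$, write $\mathbf x=\mathbf v+\mathbf d$ and use Taylor's theorem with remainder:
\begin{equation*}
L(\mathbf x,\lambda)=L(\mathbf v,\lambda)+\tfrac12\mathbf d^{\mathrm T}\mathbf H(\mathbf v)\mathbf d+R(\mathbf d),\qquad |R(\mathbf d)|\le C\|\mathbf d\|^3 \text{ for } \|\mathbf d\|\le 1,
\end{equation*}
with $\mathbf H(\mathbf v)$ as in (\ref{hessian_matrix}). The first-order term drops out by the KKT condition.

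Second, decompose $\mathbf d$ along $\mathbf v$ and the tangent space $\mathbb V=\{\mathbf w:\mathbf v^{\mathrm T}\mathbf w=0\}$. Feasibility of $\mathbf x$ gives $2\mathbf v^{\mathrm T}\mathbf d+\|\mathbf d\|^2=0$. Hence
\begin{equation*}
\mathbf d=\mathbf w+t\mathbf v,\qquad \mathbf w=\mathbf P^{\bot}_{\mathbf v}\mathbf d\in\mathbb V,\qquad t=-\tfrac12\|\mathbf d\|^2 .
\end{equation*}
So the normal component is of second order. Substituting gives $\mathbf d^{\mathrm T}\mathbf H\mathbf d=\mathbf w^{\mathrm T}\mathbf H\mathbf w+O(\|\mathbf d\|^3)$. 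Moreover $\|\mathbf w\|^2=\|\mathbf d\|^2-t^2\ge\frac12\|\mathbf d\|^2$ once $\|\mathbf d\|$ is small.

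Third, convert the pointwise hypothesis (\ref{second_order}) into a uniform bound, which is the step needing care. The map $\mathbf w\mapsto\mathbf w^{\mathrm T}\mathbf H(\mathbf v)\mathbf w$ is continuous and negative on the compact set $\{\mathbf w\in\mathbb V:\|\mathbf w\|=1\}$. It therefore attains a maximum $-2\kappa<0$ there, and by homogeneity $\mathbf w^{\mathrm T}\mathbf H\mathbf w\le-2\kappa\|\mathbf w\|^2$ on all of $\mathbb V$. Combining the three steps,
\begin{equation*}
L(\mathbf x,\lambda)\le L(\mathbf v,\lambda)-\tfrac{\kappa}{2}\|\mathbf d\|^2+C'\|\mathbf d\|^3 .
\end{equation*}
The right side is strictly smaller than $L(\mathbf v,\lambda)$ for $0<\|\mathbf d\|<\kappa/(2C')$.

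Finally, since $L$ equals $\frac1m\mathcal S\mathbf x^m$ on the sphere, this gives $\mathcal S\mathbf x^m<\mathcal S\mathbf v^m$ for all feasible $\mathbf x\ne\mathbf v$ in a neighborhood of $\mathbf v$. That is, $\mathbf v$ is a strict local maximum.

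I expect the main obstacle to be keeping the error from the non-tangential part of $\mathbf d$ under control, i.e.\ showing it is genuinely $O(\|\mathbf d\|^3)$ and not merely $o(\|\mathbf d\|^2)$ uniformly. The explicit sphere identity $t=-\frac12\|\mathbf d\|^2$ handles this directly, so no implicit-function argument is needed. The same argument, with the tangent space $\operatorname{null}[\mathbf A^{\mathrm T}]$ for $\mathbf A$ in (\ref{Aform}), covers the two-constraint model (\ref{optmodel}) as used in the text. There the second constraint is linear, so its normal component is exactly zero.
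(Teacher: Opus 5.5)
Your proof is correct. It differs from the paper only because the paper does not prove this statement: it imports it from the cited textbook \cite{Numerical}, whose argument is by contradiction and is built for general smooth equality and inequality constraints. There one assumes feasible points $\mathbf z_k\to\mathbf v$ that violate a quadratic-growth estimate and extracts a limit $\mathbf d$ of $(\mathbf z_k-\mathbf v)/\|\mathbf z_k-\mathbf v\|$. The inclusion of the tangent cone in the linearized feasible cone then places $\mathbf d$ in the critical cone (here just $\mathbb V$), and the second-order expansion of the Lagrangian gives the contradiction.

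You instead replace that cone lemma by the exact sphere identity $\mathbf v^{\mathrm T}\mathbf d=-\tfrac12\|\mathbf d\|^2$, and you use compactness only on the unit sphere of $\mathbb V$. This yields a direct, constructive bound $\mathcal S\mathbf x^m\le\mathcal S\mathbf v^m-\tfrac{m\kappa}{4}\|\mathbf d\|^2$ on the explicit ball $\|\mathbf d\|\le\kappa/(4C')$. As you note, it carries over verbatim to model (\ref{optmodel}), because $\mathbf u\perp\mathbf 1_n$ and the second constraint is linear. The textbook route buys generality (nonlinear and inequality constraints, critical cones, no constraint qualification), but its neighborhood is not explicit.

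Two details are in your favour. You state the KKT hypothesis $\triangledown_{\mathbf v}L(\mathbf v,\lambda)=\mathbf 0$, which the statement leaves implicit but genuinely needs. You also use the dimensionally correct tangent space $\operatorname{null}[\mathbf A^{\mathrm T}]$, where the paper writes $\operatorname{null}[\mathbf A]$.

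One small simplification: at an eigenpair $\mathbf H(\mathbf v)\mathbf v=(m-2)\lambda\mathbf v$. So the cross term $\mathbf v^{\mathrm T}\mathbf H(\mathbf v)\mathbf w$ vanishes exactly, and only the $t^2$ term needs bounding.
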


For the above conclusion, 
\cite{Numerical}  further deduced  the concept of   the  projected  Hessian  matrix (denoted 
$\mathbf P 
\in \mathbb R^{(n-1) \times  (n-1)} 
$), which   is  calculated  by  
\begin{equation}\label{Pmatrix}
\mathbf P = 
\mathbf Q_{2}^{\mathrm T}
\mathbf H(\mathbf v) 
\mathbf Q_{2},  
\end{equation}
where 
$  \mathbf Q_{2} $ is  obtained by
QR  factorization of   $     \triangledown  g $:
  \begin{equation}\label{QR_factor}
\begin{split}
\triangledown  g
&=\mathbf Q
\begin{bmatrix}
a   \\
\mathbf 0
\end{bmatrix} =
\begin{bmatrix}
\mathbf Q_{1}  &   \mathbf Q_{2}
\end{bmatrix}
\begin{bmatrix}
a  \\
\mathbf 0
\end{bmatrix}
=\mathbf Q_{1} a
\end{split}.
\end{equation} 
See Page 337 of \cite{Numerical}    for details. 
By checking   the   negative  definiteness of  $\mathbf P$, 
 i.e.,  $\mathbf P \prec 0 $, one can identify the   locally   maximized solutions. 
However,  QR  factorization  can only be  numerically  computed but   
is  not  suitable   for theoretical analysis. 
To deal with this issue, in our previous work \cite{wang2024locally}, we deduce an equivalent formula, which is stated as follows. 
	Based on  (\ref{vdefine}), we    conclude  that  $   \mathbf w$
	also  lies  in  the  orthogonal complement space of $ \mathbf  v$.
	Therefore,  $   \mathbf w$ can be linearly expressed  by the column vectors of  $\mathbf  P_{\mathbf  v } ^{\bot}$,
	checking  (\ref{second_order})
	is equivalent to  checking   the  negative semi-definiteness    of  the  matrix, denoted
	$\mathbf   K$, with the  form of
	\begin{equation}
		\notag
			\mathbf{K }  : = \mathbf {K}(\mathbf v)=
		(\mathbf  P_{\mathbf  v } ^{\bot})^{\mathrm T}    \mathbf H (\mathbf v)  \mathbf  P_{\mathbf  v }^{\bot}
		=
		(\mathbf  P_{\mathbf  v } ^{\bot})    \mathbf H (\mathbf v)  \mathbf  P_{\mathbf  v }^{\bot}
		,
	\end{equation}
	which
	corresponds to
	(\ref{Mhess}).
		 
	Due to the introduction of $ \mathbf  P_{\mathbf  v }^{\bot}$
	which  has   rank $n-1$,  
	$\mathbf{K } \in \mathbb R^{n \times n} $
			must   have at least one zero eigenvalue.  
			The conclusion in Lemma  3 of    \cite{wang2024locally}
			tells us that  
			if   $\mathbf  P \prec 0$, 
	it holds that 		$\mathbf  K  \preceq 0$ and   has   only one zero eigenvalue. 		
	This explains the statement in Lemma \ref{lem.LOCALMAX}. 
Similarly, 
			if   $\mathbf  P \succ 0$, 
		it holds that 		$\mathbf  K  \succeq 0$ and   has   only one zero eigenvalue. 

	Note that
	there  is
	only one constraint for model (\ref{opti_ori}).
	When more constraints are included, it can be similarly analyzed.
	For example,
	for    model  (\ref{optmodel}) with two constraints,  $\mathbf A$  should be  defined  as
	$
		\mathbf A =  [\triangledown  g_{1}(\mathbf u) , \triangledown  g_{2}(\mathbf u) ] =
		[\mathbf u, \mathbf 1_{n}]  \in   \mathbb R^{n  \times 2 }.
	$	
 The
	locally    optimal   solutions of   (\ref{optmodel})
	can  be  identified  by
	checking the  negative
	semi-definiteness of the   matrix   defined  in (\ref{Mhess2}) 
		 with a form of 
		$\mathbf {M} =
	\mathbf  P_{\mathbf  A } ^{\bot}   \mathbf H (\mathbf u)  \mathbf  P_{\mathbf  A }^{\bot}
	$. 
		Since $\mathbf u^{\mathrm T}\mathbf 1_{n} =0$,
	$	\mathbf  P_{\mathbf  A }^{\bot}$
 has   rank $n-2$. 
	If   $\mathbf  P \prec 0$, 
	it holds that 		$\mathbf  M  \preceq 0$ and    has    two zero eigenvalues. 		
	This is checked and discussed in Lemma \ref{Theorem_structureoflocal}
	and \ref{Theorem_structureoflocaleven1}
 to guarantee locally maximized solutions.
The  locally minimized   solutions
 	can be checked and discussed in a similar way.

\end{appendix}

\end{document}